\let\oldtocsection=\tocsection
\let\oldtocsubsection=\tocsubsection
\let\oldtocsubsubsection=\tocsubsubsection
\renewcommand{\tocsection}[2]{\hspace{0em}{\vspace{0.5em}}\oldtocsection{#1}{#2}}
\renewcommand{\tocsubsection}[2]{\hspace{1em}{\vspace{0.5em}}\oldtocsubsection{#1}{#2}}
\renewcommand{\tocsubsubsection}[2]{\hspace{2em}\oldtocsubsubsection{#1}{#2}}
\numberwithin{equation}{section} 
\newcommand{\Z}{\mathbb{Z}}
\newcommand{\C}{\mathbb{C}}
\newcommand\N{\mathbb{N}}
\newcommand\bb{\bf}
\newcommand{\LL}{\mathfrak{L}}
\newtheorem{thm}{Theorem}[section]
\newtheorem{theorem}[thm]{Theorem}
\newtheorem{cor}[thm]{Corollary}
\newtheorem{prop}[thm]{Proposition}
\newtheorem{lemma}[thm]{Lemma}
\theoremstyle{definition}
\newtheorem{definition}[thm]{Definition}
\newtheorem{remark}[thm]{Remark}
\theoremstyle{definition}
\theoremstyle{remark}
\theoremstyle{remark}
\def\imod#1{\allowbreak\mkern10mu({\operator@font mod}\,\,#1)}
\begin{document}
\title{Representations of loop extended Witt algebras}
\author{Sachin S. Sharma, Priyanshu Chakraborty, Ritesh Kumar Pandey, and S. Eswara Rao}
\date{}
\maketitle

\begin{abstract}
In this paper, we classify irreducible modules for loop extended Witt algebras with finite dimensional weight spaces. They turn out to be either modules  with uniformly bounded weight spaces or highest weight modules.
We further prove that all these modules are single point evaluation modules ($n \geq 2$). So they are actually irreducible modules for extended Witt algebras.

\end{abstract}

\tableofcontents

\bigskip

\noindent
{\bf{Notations:}} 
\begin{itemize}
\item Let $\C, \mathbb{R}, \Z$ denote the set of complex numbers, set of real numbers and set of integers respectively. 
\item Let $\Z_+$ and $\N$ denote set of non-negative integers and set of positive integers respectively. For $n \in \N$, $\C^n = \{(x_1, \ldots , x_n) : x_i \in \C, 1 \leq i \leq n\}$ and
  $\mathbb{R}^n, \Z^n, \N^{n}$ and $\Z_{+}^{n}$ are defined similarly. 
 \item For $n \geq 2$, the elements of $\C^n, \mathbb{R}^n$ and $\Z^n$ are written in boldface.
 \item For any Lie algebra $\mathfrak{g}$, and any commutative associative unital  algebra $B$ over $\C$, the notation $\mathfrak{g}(B)$ will always mean $\mathfrak{g} \otimes B$.
 \item For any Lie algebra $\mathfrak{g}$, $U(\mathfrak{g})$ will denote the universal enveloping algebra of $\mathfrak{g}$.
 \item Let $(\cdot  | \cdot)$ denote the standard inner product on $\C^n$.
\end{itemize}

\section{Introduction}
In the last four decades, the representation theory of infinite dimensional Lie algebra is well studied as it plays an important role in both Physics and Mathematics.
 Affine Lie algebras and Virasoro algebras are very basic and their representation theory has made a profound impact on the
theory of infinite dimensional Lie algebras in general. Both these infinite dimensional Lie algebras 
are generalised for several variables and are well studied. Toroidal Lie algebras are generalisations of affine Lie algebras and Witt algebras are generalisations of centerless Virasoro algebras.

Let $A_n$ be a Laurent polynomial ring in $n$ variables. Then the Lie algebra of derivations of $A_n$ is denoted by $W_n$ or $\mathrm{Der}(A_n)$ and is called a Witt algebra. Witt algebra is a classical object and studied by many mathematicians; see \cite{luzhao, ser, kgx, ybvf, xgkz}
and references theirin.
Very recently, in \cite{ybvf}, all the irreducible modules for $W_n$ with finite dimensional weight spaces have been classified by Billig and Futorny. They turn out be either uniformly bounded or ``highest weight'' modules.

Object of our paper is to study representations of a loop version of  a Witt algebra. In recent times representations of  loop algebras of  Lie algebras are extensively studied. Let $\mathfrak{g}$ be any Lie algebra and $B$ be any commutative associative unital algebra over $\C$.
Then the most general version of a loop algebra of $\mathfrak{g}$ is $\mathfrak{g} \otimes B$. The case when $B = A_n$ is very important as toroidal Lie algebra is defined as the universal central extension of $\mathfrak{g} \otimes A_n$, where $\mathfrak{g}$ is a finite dimensional 
simple Lie algebra. In \cite{R2} the theory of irreducible modules for toroidal Lie algebra with finite dimensional weight spaces is reduced to that of $\mathfrak{g} \otimes A_n$ or $\mathfrak{g}_{aff} \otimes A_n$, where $\mathfrak{g}$ and $\mathfrak{g}_{aff}$ are simple finite dimensional Lie algebra and affine
Lie algebra respectively. The case where $\mathfrak{g}$ is finite dimensional simple Lie algebra is well studied. In fact all finite dimensional irreducible modules for $\mathfrak{g} \otimes A_n$ have been classified \cite{R1}. The classification is also done where $A_n$ is replaced by any commutative associative unital algebra $B$ \cite{CFK, Mi}. The analogous problem for $\mathfrak{g}_{aff} \otimes A_n$ is solved in \cite{R1} where finite dimensional modules are replaced by integrable modules with finite dimensional weight spaces. In \cite{BR} highest weight integrable modules with finite dimension weight spaces are classified 
where $\mathfrak{g}_{aff}$ is replaced by any symmetrizable Kac-Moody algebra and $A_n$ is replaced by a finitely generated commutative associative algebra $B$ over $\C$. It is also important to mention that the theory of Weyl modules is rapidly expanding where the underlying Lie algebra is a loop algebra of a finite dimensional simple Lie algebra.

In the other direction all irreducible modules for $\mathrm{Vir} \otimes \C [t,t^{-1}]$ with finite dimensional weight spaces were classified in \cite{krg}. This result was generalised in \cite{sav}, where author replaced $\C[t,t^{-1}]$ by a finitely generated commutative associative unital algebra over $\C$.
Very recently, for $n \geq 2$, a partial classification result for the Lie algebra $(W_n \ltimes A_n) \otimes B$ is obtained in \cite{serpc}.

This paper is concerned with irreducible representations of $W_n \otimes B$ and $(W_n \ltimes A_n) \otimes B$ with finite dimensional weight spaces,  where $B$ is  a finitely generated commutative associative unital algebra over $\C$. We prove that any such module is a single point 
evaluation module (see definition \ref{evd1}). In other words these modules are irreducible modules for $W_n$ or $W_n \ltimes A_n$ and their complete classification is known \cite{ybvf, kgx}.

As pointed out earlier that the irreducible modules for $W_n$ or $W_n \ltimes A_n$  with finite dimensional weight spaces will fall in two cases \cite{kgx} : one is uniformly bounded and the other one is
highest weight modules. Weights of uniformly bounded module form a translate of a full lattice (missing at most a one point); while weights of highest weight modules are truncated from above in the one direction.

 It is crucial to note that highest weight space of highest weight modules
is a (non-trivial) uniformly bounded module for a lower rank Lie subalgebra. This fact was a driving force behind our main conclusion that  all the modules (irreducible highest weight modules with finite dimensional weight spaces) are  a single point evaluation modules.
For if it is a tensor product of  two distinct points evaluation modules then it would contain a tensor product of two uniformly bounded modules of lower rank Lie subalgebra. But as a tensor product of two nontrivial uniformly bounded irreducible modules contains infinite dimensional weight spaces (with respect lower rank Lie subalgebra), it would imply the given irreducible highest weight module contains infinite dimensional weight spaces. For $n =1$, the highest weight space is one dimensional so the above reasoning fails.

\subsection{Organisation of the paper} Our main objects of the study are the Lie algebras $W_n \otimes B$ and $(W_n \ltimes A_n) \otimes B$. But due to some technical  reasons (see the proof of Proposition \ref{propimp}), we need to consider several copies of $A_n$, i.e., $W_n \ltimes(S \otimes A_n)$ where $S$ is 
any finite dimensional abelian Lie algebra. Let us denote $\LL_{S, n} : = W_n \ltimes(S \otimes A_n)$ and $\LL_{S,n}(B) := (W_n \ltimes(S \otimes A_n)) \otimes B$ and $\LL := W_n \ltimes A_n$, where $B$ is a finitely generated commutative associative unital algebra over $\C$.

We begin the Section \ref{secb} with basic definitions. In Section \ref{secubm} we recall the known results for $W_n$ and $\LL$ along with tensor field modules (also known as Larson-Shen modules, see \cite{Lar, shen}). In this section we completely classify irreducible uniformly bounded $\LL_{S,n} (B)$-modules.
Let $V$ be an irreducible uniformly bounded $\LL_{S,n} (B)$-module.
We first note that $S$ can be reduced to at most one dimensional in Proposition \ref{prom1}. So we can consider $V$ as an irreducible uniformly bounded $\LL(B)$ or $W_n (B)$-module. In Proposition \ref{mthmub1}, we prove that the action of $A_n \otimes B$ is associative on $V$. So by application of 
\cite{serpc} Theorem 4.5, $V$ is a single point evaluation module.
 In the rest of the section we attend the case where $V$ is an irreducible uniformly bounded $W_n (B)$-module. In this case also, using the celebrated technique of $A_n$-cover \cite{ybvf}, we prove that $V$ is a single point evaluation module. 
 
 In the final section we classify all irreducible $\LL_{S, n} (B)$-modules with non-uniformly bounded but finite dimensional weight spaces. The case $n =1$ follows from \cite{wrc} and \cite{sav} with modest generalisations. For $n \geq 2$, we define a highest weight $\LL_{S,n}(B)$-modules for a given
 triangular decomposition. Then by utilising results and techniques of \cite{vmkz}, we prove in Theorem \ref{thmnu} that every irreducible $\LL_{S,n}(B)$-module with non-uniformly bounded but finite dimensional weight spaces is a highest module with an appropriate triangular decomposition. In Theorem \ref{prom2}
 we prove that the irreducible highest weight $\LL_{S,n}(B)$-modules with finite dimensional weight spaces are single point evaluation modules. This result relies on the fact that the highest weight space of highest weight module with finite dimensional weight spaces is an irreducible uniformly bounded 
 module for the lower rank Lie subalgebra which is a single point evaluation module by Theorem \ref{mthmub}. Finally, just like in the uniformly bounded case, we prove that the finite dimensional abelian Lie algebra $S$ can be reduced to  at most one dimension (Theorem \ref{mthmnub}). The following is our main result:
 \begin{thm}
 Let $V$ be a non-trivial irreducible $\LL_{S, n}(B)$-module with finite dimensional weight spaces. Then $V$ is either a uniformly bounded module or a highest weight module. Further,
 \begin{itemize}
 \item If $V$ is uniformly bounded, then $V$ can be considered as an irreducible uniformly bounded module for $\LL(B)$ or $W_n(B)$ ($n \geq 1$). Moreover, considered as an $\LL(B)$ or $W_n(B)$-module, $V$ is a single point evaluation module.
 \item If $V$ is a highest weight module, then $V$ can be considered as an irreducible highest weight module for $\LL(B)$ or $W_n(B)$ ($n \geq 2$). Moreover, considered as an $\LL(B)$ or $W_n(B)$-module, $V$ is a single point evaluation module.
 \end{itemize}
 \end{thm}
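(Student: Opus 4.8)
The proof assembles the structural results established in Section~\ref{secubm} and in the final section around the uniformly bounded versus non-uniformly bounded dichotomy. Let $V$ be a non-trivial irreducible $\LL_{S,n}(B)$-module with finite-dimensional weight spaces. If the weight multiplicities are globally bounded, $V$ is uniformly bounded by definition. Otherwise, for $n\geq 2$ I would invoke Theorem~\ref{thmnu}, which asserts precisely that such a module admits a triangular decomposition of $\LL_{S,n}(B)$ with respect to which it is a highest weight module; for $n=1$ the same dichotomy (indeed the full classification) follows from \cite{wrc} and \cite{sav} after the adaptations carried out in the final section. This gives the first assertion.

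\noindent\textbf{Uniformly bounded case.} The first move is to shrink $S$: by Proposition~\ref{prom1} the abelian part $S$ acts through an at most one-dimensional quotient, so $V$ is in fact an irreducible uniformly bounded module for $\LL(B)$ (quotient one-dimensional, identifying $S\otimes A_n\cong A_n$ so that $W_n\ltimes(S\otimes A_n)\cong\LL$) or for $W_n(B)$ (quotient zero). In the $\LL(B)$-subcase, Proposition~\ref{mthmub1} shows the $(A_n\otimes B)$-action on $V$ is associative, whence \cite[Theorem~4.5]{serpc} identifies $V$ as a single point evaluation module. In the $W_n(B)$-subcase I would run the $A_n$-cover construction of Billig--Futorny \cite{ybvf}: the cover is naturally a module over $\LL_{S',n}(B)$ for a finite-dimensional abelian Lie algebra $S'$ — this is exactly the technical reason for carrying a general $S$ throughout, cf. the proof of Proposition~\ref{propimp} — so Propositions~\ref{prom1} and~\ref{mthmub1} apply to it, forcing a single evaluation point on the cover and hence on its subquotient $V$. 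This yields the second bullet, for all $n\geq 1$.

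\noindent\textbf{Highest weight case ($n\geq 2$).} The key point, flagged in the introduction, is that the highest weight space $V^{\mathrm{top}}$ is a non-trivial irreducible \emph{uniformly bounded} module for a Witt-type Lie subalgebra of rank strictly less than $n$ (suitably extended and with coefficients in $B$). Applying Theorem~\ref{mthmub} to $V^{\mathrm{top}}$ shows it is a single point evaluation module, and Theorem~\ref{mthmnub} then records that $S$ can be reduced to at most one dimension for $V$ itself, so that $V$ is a highest weight module for $\LL(B)$ or $W_n(B)$. Finally Theorem~\ref{prom2} propagates the single-evaluation-point property from $V^{\mathrm{top}}$ to all of $V$, using that $V$ is generated by $V^{\mathrm{top}}$ under the lowering subalgebra. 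This completes the third bullet.

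\noindent\textbf{Where the difficulty lies.} The genuinely hard inputs are the two facts consumed above, not the bookkeeping around them. First, Theorem~\ref{thmnu} — ruling out irreducible modules with finite-dimensional weight spaces that are neither uniformly bounded nor bounded from above in any direction — is delicate and is where the techniques of \cite{vmkz} are essential. Second, establishing that $V^{\mathrm{top}}$ is a \emph{non-trivial} uniformly bounded module of strictly smaller rank is exactly where $n\geq 2$ is used: for $n=1$ the highest weight space is one-dimensional, the lower-rank subalgebra is trivial, and the single-evaluation conclusion genuinely fails, which is why the highest weight bullet is restricted to $n\geq 2$.
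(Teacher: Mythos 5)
Your overall route is the paper's: the dichotomy via Theorem \ref{thmnu} (with \cite{wrc} and \cite{sav} for $n=1$), reduction of $S$ by Proposition \ref{prom1}, associativity of the $A_n\otimes B$ action (Theorem \ref{mthmub1}) plus \cite[Theorem 4.5]{serpc} in the $\LL(B)$ case, and in the highest weight case the rank reduction for the highest weight space followed by Theorem \ref{prom2} and the reduction of $S$ (your order of the last two steps is reversed relative to the paper, which is harmless).

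There is, however, a genuine gap in your uniformly bounded $W_n(B)$ subcase. You assert that the $A_n$-cover of $V$ ``is naturally a module over $\LL_{S',n}(B)$'' and that Propositions \ref{prom1} and \ref{mthmub1} then apply to it. As stated this step fails: the Billig--Futorny cover $\widehat V\subset\mathrm{Hom}(A_n,V)$ of \cite{ybvf} carries only an action of $\LL=W_n\ltimes A_n$, not of anything with coefficients in $B$, and tensoring naively with $B$ destroys uniform boundedness when $B$ is infinite dimensional, so no composition series argument is available. The paper's proof inserts the essential preliminary step you omit, Theorem \ref{t0.3}: for an irreducible uniformly bounded $W_n(B)$-module there is a co-finite ideal $J\subseteq B$ with $W_n\otimes J.V=0$, so $V$ factors through $W_n(R)$ with $R=B/J$ finite dimensional; only then is $\widehat V\otimes R$ formed, equipped with an $\LL(R)$-structure with associative $A_n\otimes R$ action, seen to be uniformly bounded because $\dim R<\infty$, and a suitable composition factor mapping onto $V$ is treated by \cite[Theorem 4.5]{serpc} (Theorem \ref{mthmub2}). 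This ideal-theoretic reduction is the crux of that subcase and cannot be skipped. Relatedly, you misattribute the reason for carrying a general abelian $S$: it plays no role in the cover construction; the paper needs it in Proposition \ref{propimp}, where the zero part $(\LL_{S,n}(B))_K$ of the triangular decomposition is identified with $\LL_{S\oplus\C e_n,\,n-1}(B)$, i.e., the highest weight space is a module over a lower-rank extended Witt algebra with an \emph{enlarged} abelian part, which is exactly what allows Theorem \ref{mthmub} to be applied to it.
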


\section{Basics} \label{secb}
 Let $A_n := \C[t_1^{\pm 1}, \ldots,  t_n^{\pm 1}]$ be a Laurent polynomial ring in $n$ variables. For ${\bf{m}} = (m_1, \ldots, m_n) \in \Z^n$, we denote $t^{\bf{m}}: = t_1^{m_1} \cdots t_n^{m_n}$.  Let $W_n : = \mathrm{Der}(A_n)$ be the Lie algebra of  derivations on $A_n$, called as a Witt algebra. It is well-known that the set $\{t^{\bf{m}}d_i : {\bf{m}} \in \Z^n, 1 \leq i \leq n\}$ forms a $\C$-basis for $W_n$, where $d_i = t_i \frac{d}{dt_i}$ for $1 \leq i \leq n$. The brackets in $W_n$ are given by $[t^{\bf{m}}d_i , t^{\bf{k}}d_j] = k_i t^{\bf{m +k}}d_j - m_j t^{\bf{m+k}} d_i$.  The well-known Virasoro algebra is the unique non-trivial  one dimensional central extension of $W_1$, i.e., $\mathrm{Vir}= W_1 \oplus \C C$, with bracket operations:
 \begin{align}\label{a2.1}
[x_m,x_n]=(n-m)x_{m+n}+\delta_{m,-n}\frac{m^3-m}{12}C,\\
[x_n,C]=0,
\end{align}
for all $n,m \in \mathbb Z$ and $x_m=t_1^md_1$ for all $m \in \mathbb Z$.

 Let $H := \bigoplus_{i =1}^{n}{\C d_i}$ be an abelian subalgebra of $W_n$ which plays a role of a Cartan subalgebra for $W_n$. For ${\bb{u}} \in \C^n$ and ${\bb{r}} \in \Z^n$, denote $D(\bb{u}, \bb{r})$  $= \sum_{i =1}^{n}{u_i t^{\bb{r}} d_i}$. We then have $[D({\bb{u}}, {\bb{r}}), D({\bb{v}}, {\bb{s}})] = D(\bb{w}, \bb{r+s})$, where
 ${\bb{w}} = ({\bb{u}} | {\bb{s}}) {\bb{v}} - ({\bb{v}} | {\bb{r}}) {\bb{u}}$.
 
 Let $S$ be any finite dimensional abelian Lie algebra over $\C$. The Witt algebra $W_n$ acts on $S \otimes A_n$ by derivations: $[D({\bb{u}}, {\bb{r}}), s \otimes t^{\bb{m}}] = ({\bb{u}} |{ \bb{m}}) s \otimes t^{\bb{m+r}}$, for all $s \in S$. The emerging Lie algebra $\mathfrak{L}_{S, n}: = W_n \ltimes (S \otimes A_n)$ is 
 called an extended Witt algebra. In a special case when $S$ is one dimensional, we denote the corresponding Lie algebra by $\LL_n: = W_n \ltimes A_n$ and when there is no ambiguity about $n$ we will simply denote it by $\LL$. Let $B$ be any finitely generated commutative associative algebra with unity over $\C$.  Then the Lie algebra $\mathfrak{L}_{S, n}(B): = \big(W_n \ltimes (S \otimes A_n)\big) \otimes B$ is called a loop extended Witt algebra with the brackets:
 $[u_1 \otimes b_1, u_2 \otimes b_2] = [u_1, u_2] \otimes b_1 b_2$ for $u_i \in \mathfrak{L}_{S, n}$ and $b_i \in B$, $i = 1,2$. The abelian subalgebra $H_1 :=  H \oplus \C$ plays a role of a Cartan subalgebra for $\mathfrak{L}_{S, n}(B)$.
 We also need to introduce the twisted Heisenberg Virasoro algebra
HVir which is a Lie algebra with a basis $\{x_i, I(j), C_D, C_{DI}, C_{I} | i, j \in \Z\}$ and the following bracket operations:
$$[x_i, x_j] = (j-i)x_{i+j} + \delta_{i, -j} \frac{i^3 - i}{12}C_D ,$$
$$[x_i, I(j)] = j I(i+j) + \delta_{i, -j}(I^2 + i) C_{DI}, $$
$$[I(i), I(j)] = i \delta_{i, -j} C_{I},$$
$$[\mathrm{HVir}, C_{D}] = [\mathrm{HVir}, C_{DI}] = [\mathrm{HVir}, C_{I}] = 0.$$
 
 Now, we recall some basic definitions. Let $\mathfrak{g}$ be any Lie algebra and let $\mathfrak{h}$ be its Cartan subalgebra.
 \begin{definition} A $\mathfrak{g}$-module $V$ is called a trivial $\mathfrak{g}$-module, if $x.v = 0$ for all $x \in \mathfrak{g}$ and for all $v \in V$. Hence if a $\mathfrak{g}$-module $V$ is not a trivial module, then there exists a $x \in \mathfrak{g}$ and $0 \neq v \in V$ such that
 $x.v \neq 0$.
 \end{definition}
 \begin{definition} A $\mathfrak{g}$-module $V$ is called a weight module if $V = \bigoplus_{\lambda \in \mathfrak{h}^{*}}{V_{\lambda}}$, where $V_{\lambda} = \{v \in V: h.v = \lambda(h).v \,\, \forall \,\, h \in \mathfrak{h}\}$.
 \end{definition}
 The set $\{\lambda \in \mathfrak{h}^*: V_{\lambda} \neq 0\}$ is called support of a module $V$ and is denoted as $\mathrm{Supp}(V)$. The elements of $\mathrm{Supp}(V)$ are called weights of $V$ and the subspaces $V_{\lambda}$ for $\lambda \in \mathrm{Supp}(V)$ are called weight spaces of $V$.
 \begin{definition}
 A $\mathfrak{g}$-module $V$ is called a Harish-Chandra module if it is a weight module and all its weight spaces are finite dimensional.
 \end{definition}
 \begin{definition}
 A $\mathfrak{g}$-module $V$ is called uniformly bounded if it is a weight module and dimensions of its all weight spaces are uniformly bounded, i.e., $\exists N \in \N$ such that $\mathrm{dim}(V_{\lambda}) < N$ for all $\lambda \in \mathrm{Supp}(V)$.
 \end{definition}
 \begin{definition} \label{evd1}
 Let $B$ be a commutative associative unital finitely generated algebra over $\C$. Let $V$ be any $\mathfrak{g} \otimes B$-module. Then $V$ is called a single point evaluation $\mathfrak{g} \otimes B$-module if
 there exists an algebra homomorphism $\eta:B \mapsto \C$ such that $x \otimes b.(v) = \eta(b)(x\otimes1).v$ for all $x \in \mathfrak{g}, b \in B$ and $v \in V$. In other words if $(V, \rho)$ is a $\mathfrak{g} \otimes B$-representation, then $V$ is a single 
 point evaluation module if the Lie algebra homomorphism $\rho: \mathfrak{g} \otimes B \rightarrow \mathrm{End}(V)$ factors through $\mathfrak{g} \otimes B/ \mathfrak{g} \otimes \mathfrak{m} \cong \mathfrak{g} \otimes B/ \mathfrak{m}$, where $\mathfrak{m}$ is a maximal ideal of $B$.
 \end{definition}
 It is easy to see that an irreducible single point evaluation $\mathfrak{g} \otimes B$-module is an irreducible $\mathfrak{g} \otimes \C \cong \mathfrak{g}$-module.
 \begin{definition}
 Let $(V, \rho)$ be a representation of $\mathfrak{g} \otimes B$. Then $V$ is called a single point generalised evaluation module if the map $\rho: \mathfrak{g} \otimes B \rightarrow \mathrm{End}(V)$ factors through $\mathfrak{g} \otimes B/ \mathfrak{g} \otimes \mathfrak{m}^k \cong \mathfrak{g} \otimes B/ \mathfrak{m}^k$ for some $k \in \N.$, where $\mathfrak{m}$ is a maximal ideal of $B$.
 \end{definition}

 \section{Uniformly bounded $\mathfrak{L}_{S, n}(B)$-modules} \label{secubm}
 In this section we classify all irreducible uniformly bounded modules for $\mathfrak{L}_{S, n}(B)$. We begin with:
  \subsection{A brief survey of some Known results} 
  It is well known that there is a class of intermediate modules $V_{\alpha, \beta}$ for $\mathrm{Vir}$ with two parameters $\alpha, \beta \in \mathbb C$. As a vector space $V_{\alpha, \beta}=\displaystyle{ \bigoplus_{n \in \mathbb Z}}\mathbb C v_n$ and $\mathrm{Vir}$ action on $V_{\alpha, \beta}$ is given by,
\begin{align}\label{a2.3}
x_n.v_k=(\alpha+k+n\beta)v_{k+n},\\
C.v_k=0,
\end{align}
for all $n,k \in \mathbb Z$. The following result is well-known:
\begin{lemma} \cite{kr} \label{prom 4}
\item[1.] The $\mathrm{Vir}$-module $V_{\alpha, \beta} \simeq V_{\alpha+m, \beta}$ for all $m \in \mathbb Z$.
\item[2.] The $\mathrm{Vir}$-module $V_{\alpha, \beta}$ is irreducible if and only if $\alpha \notin \mathbb Z $  and $\beta \notin \{0,1\}$.
\item[3.] $V_{0,0}$ has a unique trivial proper submodule $\mathbb Cv_0$ and denote $V_{0,0}'=V_{0,0}/\mathbb Cv_0$.
\item[4.] $V_{0,1}$ has a unique non-zero proper submodule $V'_{0,1}=\displaystyle{\bigoplus_{i\neq 0}} \mathbb C v_i$. 
\item[5.]  $V'_{\alpha,0}\simeq V'_{\alpha,1}$ for all $\alpha \in \mathbb C$.
\end{lemma}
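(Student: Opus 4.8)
\smallskip
\noindent\emph{Proof strategy.} All five statements are the classical description of the Virasoro intermediate–series (tensor density) modules, so the plan is a chain of direct verifications with the one–dimensional weight spaces $\C v_k$, arranged so that each part is used in the next. For Part~1 I fix the standard basis $\{w_j\}_{j\in\Z}$ of $V_{\alpha+m,\beta}$, for which $x_n.w_j=(\alpha+m+j+n\beta)w_{j+n}$, and check that the linear bijection $v_k\mapsto w_{k-m}$ intertwines the two actions, since $x_n.w_{k-m}=(\alpha+m+(k-m)+n\beta)w_{k+n-m}=(\alpha+k+n\beta)w_{k+n-m}$. Thus the isomorphism type of $V_{\alpha,\beta}$ depends only on $\alpha$ modulo $\Z$, and in Parts~2 and~5 I will freely normalise an integral $\alpha$ to $\alpha=0$.

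\smallskip
For Part~2 note first that $x_0$ acts on $v_k$ by the scalar $\alpha+k$, so the weight spaces of $V_{\alpha,\beta}$ are the pairwise distinct lines $\C v_k$; hence every submodule is spanned by a subset of $\{v_k\}$, and irreducibility is equivalent to the $\mathrm{Vir}$–submodule generated by an arbitrary single $v_{k_0}$ being all of $V_{\alpha,\beta}$. Because $x_n.v_{k_0}=(\alpha+k_0+n\beta)v_{k_0+n}$, this comes down to a short case analysis of when the coefficient $\alpha+k+n\beta$ can vanish at an integer $n$. Outside the locus $\{\alpha\in\Z,\ \beta\in\{0,1\}\}$ it vanishes, for each fixed $k_0$, at most once, and one can reach every $v_k$ from $v_{k_0}$: directly if no index is blocked, otherwise by overshooting the blocked index by one step and returning, the return coefficient being a nonzero multiple of $1-\beta$ when $\alpha\notin\Z$, and one simply routes through $v_0$ when $\alpha\in\Z$ and $\beta\notin\{0,1\}$ (there $x_n.v_0=n\beta v_n$ with $\beta\neq0$ and $x_{-k_0}.v_{k_0}=k_0(1-\beta)v_0\neq0$). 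This gives irreducibility off the locus. On the locus, after normalising $\alpha=0$, the modules $V_{0,0}$ and $V_{0,1}$ contain the explicit proper submodules of Parts~3 and~4, hence are reducible; the two directions together complete Part~2.

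\smallskip
Parts~3 and~4 are read off from the same coefficient. For $V_{0,0}$ one has $x_n.v_0=0$ and $x_n.v_k=k\,v_{k+n}$, so $\C v_0$ is a trivial submodule while any submodule containing some $v_k$ with $k\neq0$ contains every $v_j$; hence $\C v_0$ is the unique proper submodule and $V'_{0,0}=V_{0,0}/\C v_0$ is irreducible. For $V_{0,1}$ one has $x_n.v_k=(k+n)v_{k+n}$; obtaining $v_0$ from a $v_i$ with $i\neq0$ would require the coefficient $i+(-i)=0$, so $\bigoplus_{i\neq0}\C v_i$ is a submodule, and the same vanishing analysis shows it is irreducible, while a submodule meeting $\C v_0$ is all of $V_{0,1}$ because $x_n.v_0=n\,v_n$. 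Thus $\bigoplus_{i\neq0}\C v_i$ is the unique nonzero proper submodule.

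\smallskip
For Part~5 I seek an intertwiner $\phi\colon V_{\alpha,0}\to V_{\alpha,1}$ of the form $v_k\mapsto c_k w_k$; equating $\phi(x_n.v_k)$ with $x_n.\phi(v_k)$ forces $c_{k+n}/c_k=(\alpha+k+n)/(\alpha+k)$, which is solved by $c_k=\alpha+k$. If $\alpha\notin\Z$ all $c_k$ are nonzero, so $\phi$ is an isomorphism $V_{\alpha,0}\to V_{\alpha,1}$ (and here $V'$ equals $V$); if $\alpha\in\Z$, after normalising $\alpha=0$ the same recipe $\bar v_k\mapsto k\,w_k$ restricted to $k\neq0$ is a bijective intertwiner from $V'_{0,0}=V_{0,0}/\C v_0$ onto $V'_{0,1}=\bigoplus_{i\neq0}\C v_i$. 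The only step that calls for any care is the bookkeeping in Part~2 — keeping track of the at most one zero of $\alpha+k+n\beta$ and of the reduction of $\alpha$ modulo $\Z$ from Part~1 — but there is no substantive obstacle; everything else is a routine check.
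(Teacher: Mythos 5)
Your verification is correct, and there is nothing in the paper to compare it against line by line: the lemma is quoted from Kac--Raina \cite{kr} without proof, so your write-up simply supplies the routine weight-space computation that the citation stands in for, in the standard way (one-dimensional weight spaces with distinct $x_0$-eigenvalues, so every submodule is spanned by a subset of the $v_k$, and everything reduces to tracking the zeros of the coefficient $\alpha+k+n\beta$).

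Two remarks. First, in Part 2 your argument actually proves irreducibility whenever $\alpha\notin\Z$ \emph{or} $\beta\notin\{0,1\}$: your routing through $v_0$ shows, e.g., that $V_{0,1/2}$ is irreducible even though $\alpha\in\Z$. Thus reducibility occurs exactly when $\alpha\in\Z$ \emph{and} $\beta\in\{0,1\}$, and the lemma's literal wording (irreducible iff $\alpha\notin\Z$ and $\beta\notin\{0,1\}$) misstates the classical fact; your proof establishes the correct version, which is also the version the paper actually uses later through $V'_{\alpha,\beta}$ and Mathieu's theorem, so no harm is done. Second, a small bookkeeping point you yourself flagged: in the overshoot step with $\alpha\notin\Z$, if $\alpha+k_0+(k-k_0)\beta=0$ then automatically $\beta\notin\{0,1\}$, and the return coefficient after overshooting by one step is $(k-k_0\pm1)(1-\beta)$; this vanishes precisely when the blocked index $k$ is adjacent to $k_0$ and the overshoot is taken in the direction of $k_0$, so you should overshoot in the direction away from $k_0$ (or pass through any third index). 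With that choice made explicit, all five parts are complete; Parts 1, 3, 4 and 5 (including the intertwiner $v_k\mapsto(\alpha+k)w_k$ and its degeneration $v_k\mapsto k\,w_k$ at $\alpha=0$) check out exactly as you wrote them.
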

For convenience we denote $V'_{\alpha, \beta}:= V_{\alpha, \beta}$ if $V_{\alpha, \beta}$ is irreducible. We have the following:
\begin{thm} \cite{om} \label{prom 5}
Any nontrivial irreducible uniformly bounded $\mathrm{Vir}$-module is  isomorphic to $V'_{\alpha, \beta}$ for some $\alpha, \beta \in \C$.
\end{thm}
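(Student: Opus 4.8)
The statement is Mathieu's classification of cuspidal (uniformly bounded) irreducible $\mathrm{Vir}$-modules, and the plan is to prove it in three steps: reduce to the case of a \emph{dense} module, show by a covering argument that such a module has one-dimensional weight spaces and trivial central charge, and then pin down the module by solving a functional equation.

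\textbf{Step 1: reduction to dense modules.} Since $V$ is irreducible, $C$ acts by a scalar $c$; and since $x_n$ shifts the $x_0$-eigenvalue by $n$, the support of $V$ lies in a single coset $\lambda+\Z$. By the structure theory of $\mathrm{Vir}$-weight modules, $\mathrm{Supp}(V)$ is, up to translation, bounded above, bounded below, or dense (equal to $\Z$ with at most one point removed). In the first two cases $V$ is a highest- or lowest-weight module; but the weight multiplicities $\dim L(c,h)_{h-N}$ of a nontrivial highest-weight $\mathrm{Vir}$-module tend to infinity with $N$ — clear from the Feigin--Fuchs character formula, and already from $\dim M(c,h)_{h-N}=p(N)$ when the Verma module is irreducible — contradicting uniform boundedness. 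Hence $V$ is dense.

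\textbf{Step 2: one-dimensional weight spaces and $c=0$.} This will be the main obstacle. I would run the $A_1$-cover construction of \cite{ybvf} (or, in Mathieu's original form, the coherent-family argument). Concretely: attach to the cuspidal module $V$ its $A_1$-cover $\widehat{V}$, a cuspidal $W_1\ltimes A_1$-module of which $V$ is a subquotient; by the classification of cuspidal $W_1\ltimes A_1$-modules with associative $A_1$-action, every irreducible constituent of $\widehat{V}$ is pulled back from a finite-dimensional $\mathfrak{gl}_1=\C$-module, hence is a tensor density module, and such modules have one-dimensional weight spaces and central charge $0$. It follows that $\dim V_\mu\le1$ for $\mu$ in the coset and that $c=0$, so $V$ is a module over the centerless algebra $W_1$. (In the coherent-family approach one instead embeds $V$ into a family of uniformly bounded modules of fixed degree whose generic members are semisimple with dense constituents; these constituents must be intermediate-series modules over $\mathrm{Vir}$, and a semicontinuity argument forces the degree to be $1$.) Building and controlling the cover — equivalently, the coherent family — is exactly where the substance of the theorem lies.

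\textbf{Step 3: identification.} With $\dim V_\mu\le1$ on $\lambda+\Z$, choose $0\neq v_k\in V_{\lambda+k}$ whenever $V_{\lambda+k}\neq0$ and write $x_mv_k=f(m,k)\,v_{m+k}$. The relations $[x_m,x_l]=(l-m)x_{m+l}$ become
\[
f(l,k)\,f(m,l+k)-f(m,k)\,f(l,m+k)=(l-m)\,f(m+l,k),\qquad m,l,k\in\Z .
\]
Putting $l=0$ gives $f(0,k)=\lambda+k$; then, using the rescaling freedom $v_k\mapsto c_kv_k$ (which replaces $f(m,k)$ by $(c_{m+k}/c_k)\,f(m,k)$) to normalize $f(\pm1,\cdot)$ and inducting on $|m|$, one obtains $f(m,k)=\alpha+k+m\beta$ for some $\alpha$ with $\alpha-\lambda\in\Z$ and some $\beta\in\C$. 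Hence $V\cong V_{\alpha,\beta}$ with the action in \eqref{a2.3}. Since $V$ is irreducible and nontrivial, Lemma \ref{prom 4} identifies it with one of the distinguished irreducible members $V'_{\alpha,\beta}$; in the degenerate range ($\alpha\in\Z$ and $\beta\in\{0,1\}$) this is precisely the dense subquotient of $V_{\alpha,\beta}$ that omits one weight, consistently with the possibility that $\mathrm{Supp}(V)=\lambda+\Z$ with one point removed. This completes the proof.
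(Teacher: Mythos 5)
A preliminary remark: the paper offers no proof of Theorem \ref{prom 5} at all; it is quoted verbatim from Mathieu \cite{om} as a known input to the survey subsection. So there is no internal argument for your write-up to be measured against, and the only question is whether your sketch would stand as a proof on its own. Steps 1 and 3 are fine and standard: the support lies in a single coset $\lambda+\Z$, highest- and lowest-weight modules are excluded because their weight multiplicities grow, and once one knows $\dim V_\mu\le 1$ and $c=0$ the functional equation argument (normalize $f(0,k)=\lambda+k$, use the rescaling freedom, induct on $|m|$) does recover $f(m,k)=\alpha+k+m\beta$, after which Lemma \ref{prom 4} sorts out the degenerate parameters.

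The genuine gap is Step 2, and you flag it yourself: ``building and controlling the cover\dots is exactly where the substance of the theorem lies.'' As written, Step 2 is a pointer to machinery rather than an argument, and it has two concrete problems. First, an ordering issue with the center: the $A_1$-cover of \cite{ybvf} is a construction for $W_1$-modules, and the span of the $x_m$ inside $\mathrm{Vir}$ is not a subalgebra when $c\neq 0$, so a $\mathrm{Vir}$-module is not a $W_1$-module until one has already shown that $C$ acts by zero; your sketch instead produces $c=0$ as an \emph{output} of the cover argument, and the cover action $X.\psi(Y,v)=\psi([X,Y],v)+\psi(Y,Xv)$ does not see the cocycle. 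You therefore need a separate (nontrivial) lemma that $C$ annihilates a uniformly bounded irreducible module, or a modified construction. Second, the two inputs you invoke --- that the $A_1$-cover of a cuspidal module is again cuspidal, and the classification of irreducible cuspidal modules with associative $A_1$-action (cf.\ \cite{er}, \cite{kgx}) --- are themselves substantive theorems; the associative-action classification is genuinely easier, but the cuspidality of the cover at $n=1$ is of essentially the same depth as the theorem being proved, and one must check that the literature being cited does not already rest on Mathieu's result (parts of \cite{kgx}, for instance, do). The coherent-family alternative you mention in parentheses is deferred in exactly the same way. So your proposal is a correct roadmap that reduces Theorem \ref{prom 5} to other deep classification results --- in effect doing by a citation chain what the paper does by citing \cite{om} directly --- but it is not a self-contained proof; to make it one you would have to carry out the cuspidality-of-the-cover bound (or Mathieu's degree/semicontinuity argument) in detail, including the preliminary treatment of the central charge.
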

Now, consider the Lie algebra $\mathrm{Vir} \otimes B$, where $B$ is finitely generated commutative associative unital algebra over $\C$. For $B = \C[t, t^{-1}]$ the following result was proved in \cite{krg}:

\begin{thm}
Let $V$ be an irreducible uniformly bounded $\mathrm{Vir} \otimes \C[t, t^{-1}]$-module. Then $V$ is a single point evaluation module.
\end{thm}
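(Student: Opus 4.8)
The plan is to first strip off the central part, then reduce the coefficient ring $\C[t,t^{-1}]$ to a single reduced point, exploiting uniform boundedness at each stage.

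\smallskip
\emph{Step 1: the central charges vanish.} Put $\mathcal D:=\mathrm{Vir}\otimes\C[t,t^{-1}]$ and $L_0:=x_0\otimes1$. Since $\mathrm{ad}(L_0)$ acts on $x_m\otimes t^l$ by $m$, $V$ is an $L_0$-weight module with $\mathrm{Supp}(V)\subseteq\lambda_0+\Z$; write $V_j:=V_{\lambda_0+j}$, so $\dim V_j\le N$ for all $j$. The subspace $C\otimes\C[t,t^{-1}]$ is a central ideal of $\mathcal D$, hence acts on $V$ through a linear functional $\chi:\C[t,t^{-1}]\to\C$. Restricting $V$ to $\mathrm{Vir}=\mathrm{Vir}\otimes1$ gives a uniformly bounded $\mathrm{Vir}$-module, all of whose composition factors are, by Theorem \ref{prom 5}, trivial or of the form $V'_{\alpha,\beta}$, on each of which $C$ acts as $0$; hence $C\otimes1$ acts as $0$ on $V$, and a standard refinement of this (using copies of $\mathrm{Vir}$ sitting inside $\mathcal D$, or a trace computation on the finite-dimensional $V_j$) upgrades this to $\chi\equiv0$. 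Thus $\rho$ annihilates $C\otimes\C[t,t^{-1}]$, so $V$ descends to an irreducible uniformly bounded module for $\mathcal D/(C\otimes\C[t,t^{-1}])\cong W_1\otimes\C[t,t^{-1}]$, where $W_1$ is the simple centreless Virasoro algebra.

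\smallskip
\emph{Step 2: the annihilator is cofinite.} For fixed $m,j$ set $\phi_l:=\rho(x_m\otimes t^l)|_{V_j}\in\Hom(V_j,V_{j+m})$. The relation $[x_0\otimes t,\,x_m\otimes t^l]=m\,x_m\otimes t^{l+1}$ gives, for $m\neq0$, the recursion $\phi_{l+1}=\tfrac1m(A\phi_l-\phi_lB)$ with $A=\rho(x_0\otimes t)|_{V_{j+m}}$, $B=\rho(x_0\otimes t)|_{V_j}$, so every $\phi_l$ $(l\ge0)$ lies in the cyclic subspace generated by $\phi_0=\rho(x_m\otimes1)|_{V_j}$ under the single operator $\psi\mapsto\tfrac1m(A\psi-\psi B)$ of the finite-dimensional space $\Hom(V_j,V_{j+m})$; the relation with $x_0\otimes t^{-1}$ handles $l<0$. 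Hence $\{\phi_l\}_{l\in\Z}$ satisfies a linear recursion of length $\le(\dim V_j)(\dim V_{j+m})$, i.e.\ there is a nonzero $f_{m,j}\in\C[t,t^{-1}]$ with $\rho(x_m\otimes f_{m,j})|_{V_j}=0$, and its roots lie in $\tfrac1m\big(\mathrm{Spec}\,\rho(x_0\otimes t)|_{V_{j+m}}-\mathrm{Spec}\,\rho(x_0\otimes t)|_{V_j}\big)$. Using that $V$ is generated over $W_1\otimes\C[t,t^{-1}]$ by any one $V_j$ and that the Lie relations link the operators $\rho(x_m\otimes t^l)$ for different $(m,l)$, one shows these finitely many roots all belong to a single fixed finite set $\{a_1,\dots,a_r\}\subset\C^{\times}$ with bounded multiplicities, so $f:=\prod_i(t-a_i)^{e_i}$ (with $e_i$ large) satisfies $\rho\big(W_1\otimes f\,\C[t,t^{-1}]\big)=0$. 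Since $W_1$ is simple, $\ker\rho=W_1\otimes J$ for an ideal $J\trianglelefteq\C[t,t^{-1}]$ containing $f$; thus $\bar B:=\C[t,t^{-1}]/J$ is finite-dimensional and $V$ is a \emph{faithful} irreducible uniformly bounded module for $W_1\otimes\bar B$.

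\smallskip
\emph{Step 3: a single reduced point.} By the Chinese Remainder Theorem $\bar B\cong\prod_{i=1}^{r}R_i$ with each $R_i$ local Artinian, $R_i\cong\C[\epsilon]/(\epsilon^{e_i})$, and $W_1\otimes\bar B\cong\bigoplus_i(W_1\otimes R_i)$ as a direct sum of ideals, so $V\cong\bigotimes_i V^{(i)}$ with $V^{(i)}$ irreducible over $W_1\otimes R_i$. A nontrivial $W_1\otimes R_i$-module contains a nontrivial $W_1$-subquotient $V'_{\alpha,\beta}$, hence has support cofinite in a coset of $\Z$; so if two of the $V^{(i)}$ were nontrivial, $\dim V_\lambda=\sum_\mu\dim V^{(1)}_\mu\dim V^{(2)}_{\lambda-\mu}$ would be infinite, a contradiction. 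Thus at most one $V^{(i)}$ is nontrivial, and faithfulness forces $r=1$, i.e.\ $\bar B=R_1\cong\C[\epsilon]/(\epsilon^{e})$. If $e\ge2$, then $\mathfrak n:=W_1\otimes\C\epsilon^{e-1}$ is an abelian ideal (as $\epsilon^{2(e-1)}=0$) acting nontrivially; the commuting operators $\rho(\mathfrak n)$ have joint generalized eigenspaces whose set of eigencharacters is a single $W_1$-coadjoint orbit in $\mathfrak n^{*}\cong W_1^{*}$. A nonzero such orbit is infinite and forces infinitely many weights into some $V_j$ (impossible), whereas the zero orbit makes $\mathfrak n$ act locally nilpotently, so $\mathrm{Ann}_V(\mathfrak n)$ is a nonzero submodule, hence all of $V$, contradicting faithfulness. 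Therefore $e=1$, $\bar B\cong\C$, $V$ is an irreducible $W_1$-module, and $\rho$ factors through $\mathrm{Vir}\otimes\C[t,t^{-1}]\to\mathrm{Vir}\otimes(\C[t,t^{-1}]/\mathfrak m)=\mathrm{Vir}$ for a maximal ideal $\mathfrak m$; that is, $V$ is a single point evaluation module in the sense of Definition \ref{evd1}.

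\smallskip
\emph{Where the difficulty lies.} The main obstacle is the last assertion of Step 2: promoting the separate recursions (one for each pair $(m,j)$) to a \emph{single} polynomial $f$ annihilating all of $W_1\otimes f\,\C[t,t^{-1}]$, which requires proving that the ``evaluation points'' seen in the various weight spaces and directions $m$ coincide — this is precisely where irreducibility must be used in an essential way. The exclusion of the nilpotent case $e\ge2$ in Step 3 is the second delicate point, since irreducible generalized-evaluation modules do occur for other Lie algebras, and here it is only the uniform bound on weight multiplicities (via the coadjoint-orbit count) that rules them out; some care is needed to justify the joint eigenspace decomposition used there.
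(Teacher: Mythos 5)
The paper never proves this statement itself --- it is quoted from \cite{krg} --- so your argument has to stand on its own, measured against the machinery the paper redevelops in higher rank (Theorem \ref{t0.3}, Proposition \ref{prom1}, Theorem \ref{mthmub1}). Your skeleton (kill the centre, find a cofinite ideal $J$ with $W_1\otimes J$ acting by zero, split $B/J$ by CRT, exclude two points by the tensor-product weight-space count, exclude nilpotents) is the right shape, and the recursion $\phi_{l+1}=\tfrac1m(A\phi_l-\phi_l B)$ in Step 2 and the two-point exclusion in Step 3 are fine. But the decisive steps are asserted, not proved. (i) The vanishing of $C\otimes t^l$ for $l\neq0$ is waved at via ``copies of $\mathrm{Vir}$ or a trace computation'': any graded embedding $x_m\mapsto x_m\otimes f_m(t)$ closing the Virasoro bracket forces $f_mf_{-m}=1$, so its canonical central element is always $C\otimes 1$, never $C\otimes t^l$; and the naive trace identity only produces a telescoping relation with uncontrolled boundary terms. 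So this step needs an actual argument. (ii) The passage from the separate annihilating polynomials $f_{m,j}$ (one per pair $(m,j)$) to a single $f$ with $\rho\bigl(W_1\otimes f\,\C[t,t^{-1}]\bigr)=0$ is exactly the content of a cofinite-annihilator theorem, the $n=1$ analogue of Theorem \ref{t0.3}: one must show each $I_m=\{b:\rho(x_m\otimes b)V_{\lambda_0+j}=0\}$ is a cofinite ideal, combine the ideals for different $m$ through bracket identities and bounded powers, and then spread the annihilation from one weight space to all of $V$ by irreducibility. You write ``one shows'' and yourself flag this as the main obstacle; as written it is a gap, not a proof.

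(iii) The exclusion of $e\geq 2$ in Step 3 does not work as stated. The abelian ideal $\mathfrak{n}=W_1\otimes\C\epsilon^{e-1}$ is infinite dimensional, its elements shift the $x_0$-weight, so $V$ admits no joint (generalized) eigenspace decomposition under $\rho(\mathfrak{n})$, and there is no group whose coadjoint orbits you can invoke; moreover, even granting that all $\mathfrak{n}$-characters vanish, the local nilpotency of each $\rho(x_m\otimes\epsilon^{e-1})$ and the existence of a common annihilated vector (so that $\mathrm{Ann}_V(\mathfrak{n})$ is a nonzero submodule) are precisely what has to be proved. In the paper's higher-rank arguments this is delivered by the Heisenberg--Virasoro technique (Theorem \ref{thm1} together with Lemma \ref{lemm1}, used in Proposition \ref{prom1} and Theorem \ref{mthmub1}): one first gets $(y\otimes t^{\bb m}\otimes 1)^N V=0$ from the classification of uniformly bounded $\mathrm{HVir}$-modules and then uses brackets to manufacture a vector killed by the whole ideal. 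For $W_1\otimes\C[\epsilon]/(\epsilon^e)$ there is no obvious $\mathrm{HVir}$ inside (the bracket of $x_i\otimes1$ with $x_j\otimes\epsilon$ has coefficient $j-i$, not $j$), which is exactly where the published proofs (\cite{sav}, \cite{krg}, \cite{wrc}) do real work. So the proposal is a plausible outline, but it is incomplete at (i), (ii) and (iii), the last of which rests on an argument that is not valid in this infinite-dimensional, weight-shifting setting.
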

Savage generalised the above result for any finitely generated commutative associative unital algebra over $\C$ \cite{sav}:
\begin{thm}
Let $V$ be an irreducible uniformly bounded $\mathrm{Vir} \otimes B$-module. Then $V$ is a single point evaluation module.
\end{thm}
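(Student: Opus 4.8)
The plan is to descend, in stages, from an arbitrary finitely generated $B$ to the Witt algebra $W_1$ itself, reserving uniform boundedness for the decisive final step. Record weights by the $x_0$-eigenvalue; they lie in a single coset $\lambda_0+\Z$, and let $N$ bound the weight multiplicities. The first move is to show the centre acts trivially. Restricting $V$ to $\mathrm{Vir}=\mathrm{Vir}\otimes 1$ gives a uniformly bounded $\mathrm{Vir}$-module all of whose composition factors are of the form $V'_{\alpha,\beta}$ (Theorem~\ref{prom 5} together with Lemma~\ref{prom 4}), so $C\otimes 1$ acts as $0$. For general $b$ the element $C\otimes b$ is central, hence acts by a scalar (Schur applies since $V$ has countable dimension); combining an identity such as $C\otimes b=2[x_{2}\otimes b,x_{-2}\otimes 1]-4[x_{1}\otimes b,x_{-1}\otimes 1]$ with the boundedness of the weight spaces forces this scalar to vanish. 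Thus $V$ is effectively a module over $W_1\otimes B$.

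Next comes the reduction to a local Artinian coefficient ring. Put $I:=\{b\in B:(\mathrm{Vir}\otimes b)V=0\}$. Since $W_1$ is perfect and $[x_i\otimes b,x_j\otimes b']=(j-i)x_{i+j}\otimes bb'+\delta_{i,-j}\tfrac{i^{3}-i}{12}C\otimes bb'$, the Lie ideal generated by $\mathrm{Vir}\otimes b$ contains $\mathrm{Vir}\otimes bB$, so $I$ is an ideal of $B$ and $V$ becomes a module over $\mathrm{Vir}\otimes(B/I)$ with $(\mathrm{Vir}\otimes\bar b)V\ne 0$ for every $0\ne \bar b\in B/I$; it suffices to prove $B/I=\C$. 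One first checks $\dim_{\C}(B/I)<\infty$: the operators $x_0\otimes\bar b$ form a commuting family on each ($\le N$-dimensional) weight space, $V$ is generated over $\mathrm{Vir}\otimes(B/I)$ by a single weight vector, and $\mathrm{Vir}\otimes(B/I)$ is a finitely generated Lie algebra, so a weight-counting argument bounds $\dim(B/I)$. Hence $B/I\cong\prod_{i=1}^{s}R_i$ with each $R_i$ local Artinian. If $s\ge 2$, a splitting $B/I=R\times R'$ realises $\mathrm{Vir}\otimes(B/I)$ as a direct sum of ideals and forces $V\cong V_1\boxtimes V_2$ with each $V_i$ nontrivial and uniformly bounded; but the $\mu$-weight space of this tensor product is $\bigoplus_{\mu_1+\mu_2=\mu}(V_1)_{\mu_1}\otimes(V_2)_{\mu_2}$, which is infinite dimensional because each $V_i$ has infinitely many nonzero weight spaces — precisely the obstruction recalled in the Introduction. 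Therefore $s=1$, and $V$ is an irreducible uniformly bounded module over $\mathrm{Vir}\otimes R$ with $R$ local Artinian with maximal ideal $\mathfrak m$, $\mathfrak m^{k+1}=0$.

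It now remains to prove that $\mathrm{Vir}\otimes\mathfrak m$ acts as $0$; granting this, $V$ is a module over $\mathrm{Vir}\otimes(R/\mathfrak m)\cong\mathrm{Vir}$, i.e.\ a single point evaluation module, and the classification of irreducible Harish-Chandra $\mathrm{Vir}$-modules applies. I would argue by downward induction on the nilpotency degree of $\mathfrak m$: factoring out $\mathfrak m^{k}$ reduces to the case $\mathfrak m^{2}=0$, in which $\mathrm{Vir}\otimes\mathfrak m$ is an abelian ideal of $\mathrm{Vir}\otimes R$. Fix $0\ne a\in\mathfrak m$; since $a\mathfrak m=0$ and $C\otimes a$ acts as $0$, both $(\mathrm{Vir}\otimes\C a)V$ and $\{v\in V:(\mathrm{Vir}\otimes\C a)v=0\}$ are $\mathrm{Vir}\otimes R$-submodules, so by irreducibility either $\mathrm{Vir}\otimes\C a$ annihilates $V$ (and this $a$ is handled) or it acts "faithfully", meaning $(\mathrm{Vir}\otimes\C a)V=V$ with no nonzero vector annihilated. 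Excluding the latter is the real content: viewing $V$ as a $W_1$-module through Mathieu's classification — only finitely many parameters $(\alpha\bmod\Z,\beta)$ occur, each with multiplicity $\le N$ — the abelian-ideal action is a $W_1$-equivariant surjection from $W_1\otimes V$ (with $W_1$ in the adjoint action) onto $V$ that kills no nonzero vector, and comparing the $\beta$-parameters carried by $W_1^{\mathrm{ad}}\otimes V'_{\alpha,\beta}$ with those available in $V$ contradicts the bound $N$.

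I expect this last step — ruling out a "faithful" abelian-ideal action on a Harish-Chandra module — to be the genuine obstacle, and it must use uniform boundedness in an essential way: the analogous assertion is false for highest weight $\mathrm{Vir}$-modules, which is exactly why the rank-one Virasoro situation is singled out in the Introduction and why $n\ge 2$ appears in the main theorem. (When $R$ happens to be a quotient of $\C[t,t^{-1}]$ one may instead pull $V$ back to a $\mathrm{Vir}\otimes\C[t,t^{-1}]$-module and invoke the result recalled above; but a general Artinian local $\C$-algebra, e.g.\ one with $\dim\mathfrak m/\mathfrak m^{2}>1$, is not such a quotient, so the direct argument is needed.)
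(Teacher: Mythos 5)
The paper does not actually prove this statement; it quotes it from Savage \cite{sav} (and, for $B=\C[t,t^{-1}]$, from \cite{krg}), so your attempt has to be judged against Savage's argument, whose overall shape — pass to a cofinite quotient of $B$, split it into local Artinian factors, rule out two or more factors via the "tensor product of two nontrivial uniformly bounded modules has infinite dimensional weight spaces" obstruction, then kill the nilpotent maximal ideal — your outline does reproduce. But as a proof the proposal has two genuine gaps. First, the cofiniteness step: the reasons you give (the commuting family $x_0\otimes \bar b$ on each $\le N$-dimensional weight space, cyclicity, finite generation of the Lie algebra) only bound, weight space by weight space, the codimension of the annihilator of that one weight space; the intersection over infinitely many weights (and over all the $x_i$) of cofinite subspaces need not be cofinite, so finite dimensionality of $B/I$ does not follow. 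What is actually needed is the ideal-manipulation argument: fix one weight $\lambda$, set $I_r=\{b\in B:(x_r\otimes b)V_\lambda=0\}$ (codimension $\le N^2$), show that suitable products of the $I_r$ are contained in $I_s$ for every $s$, obtain a single cofinite ideal $J$ with $(\mathrm{Vir}\otimes J)V_\lambda=0$, and propagate by irreducibility — exactly the scheme carried out in Theorem \ref{t0.3} for $W_n(B)$ and in \cite{sav} for $\mathrm{Vir}\otimes B$. Your sketch skips precisely this.

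Second, and more seriously, the decisive step — excluding a ``faithful'' action of the abelian ideal $\mathrm{Vir}\otimes\mathfrak m$ in the case $\mathfrak m^2=0$ on a uniformly bounded irreducible — is not proved: you yourself call it ``the genuine obstacle'' and offer only the phrase about comparing $\beta$-parameters of $W_1^{\mathrm{ad}}\otimes V'_{\alpha,\beta}$ with those in $V$. That is not an argument, and it is doubtful it can be made into one along those lines: nonzero $W_1$-module maps from $W_1^{\mathrm{ad}}\otimes V'_{\alpha,\beta}$ to intermediate series modules do exist for suitable parameters (this is essentially why tensor modules admit an action of the non-semisimple part of $W_n\ltimes A_n$), so no contradiction comes from parameter bookkeeping alone; ruling out the faithful action is the real content of the theorem and in the literature requires the detailed intermediate-series computations of \cite{krg}/\cite{sav}, or, in the spirit of this paper, a cover-type or Heisenberg--Virasoro-type argument (Proposition \ref{prom1}, Theorems \ref{mthmub1} and \ref{mthmub2}) — which does not transfer directly here because $W_1\otimes a$ carries the adjoint, not the function, module structure. (A smaller gap of the same kind: the vanishing of the scalar by which $C\otimes b$ acts does not follow from your commutator identity plus boundedness as stated, since the telescoped boundary traces are not controlled; this too needs an argument, though it is minor compared with the two points above.) So the skeleton is the right one, but the two load-bearing steps of the theorem are missing.
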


 We now move our attention to $n \geq 2$ case.
 Let us recall the tensor field  module on a torus $T(U, {\bm \gamma}): = t^{{\bm \gamma}} A_n \otimes U$, where $U$ is a finite dimensional irreducible $\mathfrak{gl}_n$ representation and 
 ${\bm \gamma} \in \C^n$, which has the following $W_n$-module structure:
 $$t^{\bb m}d_i. (t^{{\bm \gamma}+ {\bb{s}}}) = (\gamma_i + s_i) t^{{\bm \gamma} + {\bb s}+ {\bb m}} \otimes u + \sum_{j = 1}^{n}{m_j t^{{\bm \gamma} + {\bb s}+ {\bb m}}} \otimes E_{j i} u,$$
 where ${\bb m, s} \in \Z^n,$ $u \in U,$  $1 \leq i \leq n$ and $E_{ji}$ is the $n \times n$ matrix whose $(j,i)$th entry is 1 and all others are zero. Let $V$ be standard representation of $\mathfrak{gl}_n$ of dimension $n$. We know that  for $ k = 0, \ldots, n$, $\Lambda^{k} \,V$ is an irreducible finite dimensional representation of $\mathfrak{gl}_n$. Then the tensor field modules  $ T(\Lambda^{k} \,V, {\bm \gamma}) =: \Omega^{k}({\bm \gamma})$ are also called modules of differential $k$-forms and they form  the de Rham complex:
 $$\Omega^{0} ({\bm \gamma}) \xrightarrow{d} \Omega^{1} ({\bm \gamma}) \xrightarrow{d} \cdots \xrightarrow{d}   \Omega^{n}({\bm \gamma}) .$$
 The differential map $d: \Omega^{k}({\bm \gamma}) \rightarrow \Omega^{k+1}({\bm \gamma})$ is $W_n$-module map for $k = 0, \ldots, n-1$, so we have that $\Omega^{k}({\bm \gamma})$  is reducible $W_n$-module for $k = 1, \dots, n-1$ and $\Omega^{0}({\bm \gamma}), \Omega^{n}({\bm \gamma})$ 
 are reducible iff ${\bm \gamma} \in \Z^n$. We have the following: 
 \begin{thm} \label{prom 2} \cite{ybvf} 
 Let $V$ be a non-trivial irreducible uniformly bounded $W_n$-module. Then one of the following statements hold:
 \begin{enumerate}[label= \((\arabic*)\)]
\item Either $V \cong T(U, {\bm \beta})$, where $U$ is an irreducible finite dimensional $\mathfrak{gl}_n$-module which is not an exterior power of a standard $n$-dimensional $\mathfrak{gl}_n$-module and ${\bm \gamma} \in \C^n$, or
\item $V \cong d(\Omega^k)({\bm \beta}) \subseteq \Omega^{k+1}({\bm \beta})$, where $0 \leq k <n$ and ${\bm \beta} \in \C^n$.
\end{enumerate}
\end{thm}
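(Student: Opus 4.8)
The plan is to follow the strategy of Billig and Futorny \cite{ybvf}: reduce the classification over $W_n$ to one over the larger Lie algebra $\LL = W_n \ltimes A_n$ by means of an $A_n$-cover, then analyse the resulting uniformly bounded (``cuspidal'') $\LL$-module by a reduction to finite-dimensional ``jet'' modules over $\mathfrak{gl}_n$, and finally read off the irreducible uniformly bounded $W_n$-modules as the irreducible subquotients of tensor field modules $T(U,\bm\gamma)$.

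\textbf{Step 1 (Support).} Since $V$ is irreducible and uniformly bounded, $\mathrm{Supp}(V)$ lies in a single coset $\bm\gamma + \Z^n$ for some $\bm\gamma \in \C^n$, and there is $N \in \N$ with $\dim V_{\bm\mu} \le N$ for all weights $\bm\mu$. I would fix such a $\bm\gamma$ and work throughout with weights in this coset.

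\textbf{Step 2 (The $A_n$-cover).} The key construction is a uniformly bounded $\LL$-module $\widehat V$, functorially built from $V$ and the coordinate algebra $A_n$, together with a surjective homomorphism of $W_n$-modules $\widehat V \twoheadrightarrow V$, such that: (i) $\widehat V$ is again uniformly bounded; and (ii) modulo the submodule on which $A_n$ acts trivially, the $A_n$-action on $\widehat V$ is unital. Property (i) is the delicate point: it amounts to the estimate that the extra data carried by $\widehat V$ is annihilated by a differential operator of bounded order, equivalently that an associated graded object is finitely generated over a polynomial ring.

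\textbf{Step 3 (Jet reduction and tensor modules).} A uniformly bounded $\LL$-module with unital $A_n$-action is finitely generated over $A_n$; a ``gauge-fixing'' argument (conjugating by $t^{\bm\gamma}$ and by suitable unipotent elements) reduces it to a finite-dimensional module over the Lie algebra of jets $\mathfrak{gl}_n \ltimes \mathfrak{n}$, where $\mathfrak{n}$ is the ideal of vector fields vanishing to order $\ge 2$ at a point. Since $\mathfrak{n}$ acts nilpotently and, on an irreducible such module, trivially, one is left with a finite-dimensional $\mathfrak{gl}_n$-module $U$, and reassembling recovers a tensor field module $T(U,\bm\gamma)$. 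Hence $V$, being an irreducible $W_n$-subquotient of some $T(U,\bm\gamma)$, is either $T(U,\bm\gamma)$ itself — which is irreducible exactly when $U$ is not an exterior power of the standard module (with the boundary cases $\Lambda^0 V, \Lambda^n V$ also irreducible when $\bm\gamma \notin \Z^n$) — or an irreducible subquotient of a reducible tensor module. For the latter, the de Rham complex $\Omega^0(\bm\gamma) \xrightarrow{d} \cdots \xrightarrow{d} \Omega^n(\bm\gamma)$ identifies the subquotients with the images $d\,\Omega^k(\bm\gamma) \cong \Omega^k(\bm\gamma)/d\,\Omega^{k-1}(\bm\gamma)$ for $0 \le k < n$ (with the convention $d\,\Omega^{-1} = 0$, so $d\,\Omega^0 \cong \Omega^0$ or $\Omega^0/\C$), which is precisely alternative (2).

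\textbf{Main obstacle.} The heart of the difficulty is Step 2: defining the $A_n$-cover so that it is a genuine $\LL$-module and, above all, proving it remains uniformly bounded. Controlling the weight multiplicities of the cover is exactly what yields the finite-order differential-operator finiteness that makes the jet reduction work; once the cover is in hand, Step 3 is essentially representation theory of $\mathfrak{gl}_n$ together with bookkeeping, and matching the reducible tensor modules with the de Rham differentials is routine.
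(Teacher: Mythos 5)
This theorem is not proved in the paper at all — it is quoted verbatim from Billig--Futorny \cite{ybvf} — and your outline reproduces exactly the strategy of that reference: pass to the $A_n$-cover, use its uniform boundedness (the paper itself later quotes this as Theorem 4.8 of \cite{ybvf}), classify cuspidal modules with unital associative $A_n$-action via the jet/$\mathfrak{gl}_n$ reduction to tensor modules $T(U,\bm\gamma)$, and identify the reducible cases with the de Rham subquotients $d(\Omega^k)(\bm\gamma)$. So your plan is the right one and matches the cited source; the only caveat is that its decisive inputs (boundedness of the cover and the classification of $A_nW_n$-modules) are invoked rather than proved, which is also how the present paper treats them.
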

Now, consider an $\LL$-module $V$. We say the action of $A_n$ is associative on $V$ if $t^{{\bb m}} t^{{\bb s}}.v = t^{{\bb m+s}}.v $ for all ${\bb m, s} \in \Z^n$ and $v \in V$. The following result was proved in \cite{er}:
\begin{thm}
Let $V$ be an irreducible $\LL$-module with finite dimensional weigh spaces with an associative action of $A_n$ and $t^{{\bb 0}}.v = v$ for all $v \in V$. Then $V \cong T(U, {\bm \gamma})$, where $U$ is an irreducible
finite dimensional representation of $\mathfrak{gl}_n$  and ${\bm \gamma} \in \C^n$, with the following actions:
$$t^{\bb m}d_i. (t^{{\bm \gamma}+ {\bb{s}}}) = (\gamma_i + s_i) t^{{\bm \gamma} + {\bb s}+ {\bb m}} \otimes u + \sum_{j = 1}^{n}{m_j t^{{\bm \gamma} + {\bb s}+ {\bb m}}} \otimes E_{j i} u,$$
$$t^{{\bb m}}. t^{{\bm \gamma} + {\bb s}} \otimes u = t^{{\bm \gamma} + {\bb m + s}} \otimes u.$$
\end{thm}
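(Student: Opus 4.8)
\emph{Proof sketch.} The first step is to turn $V$ into a free $A_n$-module on a single weight space. Since the action of $A_n$ is associative and $t^{\mathbf 0}$ acts as the identity, every operator $t^{\mathbf m}$ ($\mathbf m\in\Z^n$) is invertible with inverse $t^{-\mathbf m}$, so $V$ is a module over the algebra $A_n=\C[\Z^n]$. In $\LL$ one has $[d_i,t^{\mathbf m}]=m_i\,t^{\mathbf m}$, hence $t^{\mathbf m}$ maps $V_\lambda$ isomorphically onto $V_{\lambda+\mathbf m}$ (identifying weights with points of $\C^n$ via $d_1,\dots,d_n$). Consequently all weight spaces have a common finite dimension, and since $\bigoplus_{\mu\in\lambda+\Z^n}V_\mu$ is an $\LL$-submodule (it is $A_n$-stable and $W_n$ is $\Z^n$-graded), irreducibility forces $\mathrm{Supp}(V)={\bm\gamma}+\Z^n$ for some ${\bm\gamma}\in\C^n$. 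Writing $U:=V_{\bm\gamma}$, I identify $V=t^{\bm\gamma}A_n\otimes U$ as an $A_n$-module via $t^{{\bm\gamma}+\mathbf s}\otimes u:=t^{\mathbf s}.u$; the $A_n$-action then coincides with that of $T(U,{\bm\gamma})$.

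To pin down the $W_n$-action, I note that for $u\in U$ the vector $t^{\mathbf m}d_i.u$ lies in $V_{{\bm\gamma}+\mathbf m}=t^{\mathbf m}.U$, so I set $\Phi_i(\mathbf m):=t^{-\mathbf m}\circ(t^{\mathbf m}d_i)|_U\in\mathrm{End}(U)$, noting $\Phi_i(\mathbf 0)=d_i|_U=\gamma_i\,\mathrm{id}$. Using $[t^{\mathbf m}d_i,t^{\mathbf s}]=s_i\,t^{\mathbf m+\mathbf s}$ together with associativity one obtains
\[
t^{\mathbf m}d_i.(t^{\mathbf s}.u)=s_i\,t^{\mathbf m+\mathbf s}.u+t^{\mathbf m+\mathbf s}.\bigl(\Phi_i(\mathbf m)u\bigr),
\]
so the entire $W_n$-action is encoded by the family $\{\Phi_i(\mathbf m)\}$. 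Applying the bracket $[t^{\mathbf m}d_i,t^{\mathbf k}d_j]=k_i\,t^{\mathbf m+\mathbf k}d_j-m_j\,t^{\mathbf m+\mathbf k}d_i$ to $u\in U$ and cancelling the invertible operator $t^{\mathbf m+\mathbf k}$ yields the functional equation
\[
k_i\Phi_j(\mathbf k)-m_j\Phi_i(\mathbf m)+[\Phi_i(\mathbf m),\Phi_j(\mathbf k)]=k_i\Phi_j(\mathbf m+\mathbf k)-m_j\Phi_i(\mathbf m+\mathbf k),\qquad \mathbf m,\mathbf k\in\Z^n .
\]

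The core of the argument is to solve this functional equation and show $\Phi_i(\mathbf m)=\gamma_i\,\mathrm{id}+\sum_{j=1}^n m_j X_{ji}$ for operators $X_{ji}\in\mathrm{End}(U)$ satisfying $[X_{ji},X_{lk}]=\delta_{il}X_{jk}-\delta_{kj}X_{li}$, i.e.\ such that $\rho(E_{ji}):=X_{ji}$ defines a $\mathfrak{gl}_n$-representation on $U$. First I restrict to a single coordinate line ($i=j$, $\mathbf m=p\,\mathbf e_i$, $\mathbf k=q\,\mathbf e_i$, with $\mathbf e_i$ the $i$-th standard basis vector), which reduces the equation to $qA(q)-pA(p)+[A(p),A(q)]=(q-p)A(p+q)$ where $A(p):=\Phi_i(p\,\mathbf e_i)$, $A(0)=\gamma_i\,\mathrm{id}$; using $\dim U<\infty$ one shows $A$ is polynomial in $p$. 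Running this in all directions and then feeding the mixed instances of the functional equation back in (also exploiting the commuting subfamilies $\{D(\mathbf u,\ell\,\mathbf r):(\mathbf u\mid\mathbf r)=0\}$) forces each $\Phi_i(\mathbf m)$ to be affine-linear in $\mathbf m$; substituting this form back into the functional equation then produces the displayed $\mathfrak{gl}_n$-relations for the $X_{ji}$.

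Finally, any nonzero $\LL$-submodule $W\subseteq V$ is $A_n$-stable and a weight submodule, hence $W=t^{\bm\gamma}A_n\otimes U'$ with $0\neq U'=W\cap U$, and $W_n$-stability of $W$ is equivalent to $\rho(\mathfrak{gl}_n)$-stability of $U'$; so irreducibility of $V$ is equivalent to irreducibility of the finite-dimensional $\mathfrak{gl}_n$-module $U$. Substituting $\Phi_i(\mathbf m)=\gamma_i\,\mathrm{id}+\sum_j m_j\,\rho(E_{ji})$ into the first displayed formula reproduces exactly the defining action of $T(U,{\bm\gamma})$, which finishes the proof. I expect the main obstacle to be the functional-equation step — proving polynomiality and then the degree bound — and it is precisely here that finite-dimensionality of the weight spaces is indispensable, since it is what excludes the higher-order ``jet'' solutions.
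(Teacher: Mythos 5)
The paper itself does not prove this theorem; it is recalled from \cite{er}, so your sketch has to be measured against the argument there (and the jet/cuspidal analysis in \cite{ybvf}). Your reduction is the standard one and is correct: associativity together with $t^{\mathbf 0}=\mathrm{id}$ makes every $t^{\mathbf m}$ invertible, so all weight spaces are identified with $U:=V_{\bm\gamma}$, $V\cong t^{\bm\gamma}A_n\otimes U$, and the $W_n$-action is encoded by the operators $\Phi_i(\mathbf m)=t^{-\mathbf m}\circ(t^{\mathbf m}d_i)|_U$ subject to exactly the functional equation you display, with $\Phi_i(\mathbf 0)=\gamma_i\,\mathrm{id}$.

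The gap is in your core step: it is \emph{not} true that the functional equation plus $\dim U<\infty$ forces $\Phi_i(\mathbf m)$ to be affine-linear in $\mathbf m$, and your closing remark that finite-dimensionality is what "excludes the higher-order jet solutions" is exactly backwards --- it is irreducibility that excludes them. Concretely, for $n=1$ take $U=\C^2$, $X=E_{11}$, $N=E_{12}$ (so $[X,N]=N$) and $\Phi(m)=\gamma\,\mathrm{id}+mX+m^2N$; a direct computation gives $k\Phi(k)-m\Phi(m)+[\Phi(m),\Phi(k)]-(k-m)\Phi(m+k)=(mk^2-m^2k)\bigl(N-[X,N]\bigr)=0$ and $\Phi(0)=\gamma\,\mathrm{id}$, so this is a solution with finite-dimensional $U$ and genuinely quadratic dependence on $m$. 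It integrates to an honest $A_1W_1$-module with $2$-dimensional weight spaces (a $1$-jet module, i.e.\ a non-split extension of two tensor modules), so nothing in your "polynomiality plus degree bound" step can rule it out. Consequently irreducibility cannot be postponed to your final paragraph: one must first record that irreducibility of $V$ is equivalent to irreducibility of $U$ under the algebra generated by \emph{all} the $\Phi_i(\mathbf m)$ (an invariant subspace $U'$ yields the submodule $t^{\bm\gamma}A_n\otimes U'$ by your own formula), then use finite-dimensionality to get polynomiality of bounded degree, and only then use irreducibility of $U$ to show that the coefficient operators of degree $\ge 2$ --- which form an ideal of jet type, corresponding to vector fields vanishing to second order at a point --- act trivially; this is the heart of the proof in \cite{er} and of the jet-module analysis in \cite{ybvf}, not a formal consequence of the functional equation. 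Once the action of that ideal is killed, your verification of the $\mathfrak{gl}_n$ relations for the linear coefficients and the identification with $T(U,{\bm\gamma})$ go through as you wrote them.
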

In the following result \cite{kgx} it is proved that if $V$ is an irreducible uniformly bounded $\LL$-module and if $t^{\bb 0}$ acts by a non-zero scalar then the action of $A_n$ is associative on $V$. In the case where $t^{\bb 0}$ acts trivially on $V$,
$A_n$ acts trivially on $V$, and so $V$ is an irreducible uniformly bounded $W_n$-module. More precisely:
\begin{thm} \cite[Theorem 3.3]{kgx} \label{prom 3}
Let $V$ be an irreducible uniformly bounded $\LL$-module. Then 
\begin{enumerate}[label= \((\arabic*)\)]
\item If $t^{\bb 0}$ acts as a non-zero scalar then $A_n$ acts associatively on $V$ and  $ V \cong T(U, {\bm \gamma})$, where $U$ is an irreducible finite dimensional $\mathfrak{gl}_n$-module and ${\bm \gamma} \in \C^n$.
\item If $t^{\bb 0}$ acts as a zero then $A_n. V = 0$ and hence $V$ is an irreducible $W_n$-module.
\end{enumerate}

\end{thm}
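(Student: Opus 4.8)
The plan is to reduce the whole statement to two facts that are already available: the classification of irreducible uniformly bounded $W_n$-modules (Theorem~\ref{prom 2}), and the tensor-field realization of irreducible $\LL$-modules that carry an associative $A_n$-action with $t^{\bb 0}$ acting as the identity (the quoted theorem of \cite{er}). Thus everything comes down to controlling the action of the abelian ideal $A_n\subseteq\LL$ on $V$. As a first step, observe that $W_n$ normalizes $A_n$, so $A_n\cdot V$ is an $\LL$-submodule; and since $[D({\bb u},{\bb r}),t^{\bb m}]=({\bb u}\,|\,{\bb m})\,t^{\bb m+\bb r}$ again lies in $A_n$, the common kernel $K:=\{v\in V: t^{\bb m}\cdot v=0\text{ for all }{\bb m}\in\Z^n\}$ is also an $\LL$-submodule. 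Irreducibility of $V$ therefore gives the dichotomy: either $A_n\cdot V=0$, or $A_n\cdot V=V$ and $K=0$.

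Consider first the case $t^{\bb 0}=0$, where the claim is that $A_n$ must act trivially. Restricted to $W_n$ the module $V$ is uniformly bounded with the same bound, and its support lies in a single coset $\lambda_0+\Z^n$. Playing the brackets $[t^{-\bb m}d_i,t^{\bb m}]=m_i\,t^{\bb 0}$ and $[t^{\bb s}d_i,t^{\bb m}]=m_i\,t^{\bb s+\bb m}$ off against the vanishing of $t^{\bb 0}$, one shows that $A_n\cdot V$ cannot be all of $V$ — equivalently, that $K\neq 0$ once $A_n$ acts nontrivially — so by the dichotomy $A_n\cdot V=0$. Then $V$ is literally an irreducible uniformly bounded $W_n$-module, which is conclusion~(2), and Theorem~\ref{prom 2} describes it explicitly if one wishes.

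Now suppose $t^{\bb 0}=c\neq 0$. The goal is the associativity identity $\rho(t^{\bb m})\rho(t^{\bb s})=c\,\rho(t^{\bb m+\bb s})$ for all ${\bb m},{\bb s}\in\Z^n$. Granting it, the rescaling $\rho(t^{\bb m})\mapsto c^{-1}\rho(t^{\bb m})$ leaves the brackets with $W_n$ and the collection of submodules unchanged, hence still defines an $\LL$-module structure on $V$, now with $A_n$ acting associatively and $t^{\bb 0}$ acting as the identity; the quoted theorem of \cite{er} then gives $V\cong T(U,{\bm\gamma})$ for an irreducible finite-dimensional $\mathfrak{gl}_n$-module $U$ and some ${\bm\gamma}\in\C^n$, which is conclusion~(1). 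To establish the identity one writes $t^{\bb m}$ as a commutator inside $\LL$ — for instance $[t^{\bb m-\bb e_j}d_j,t^{\bb e_j}]=t^{\bb m}$ after adjusting a coordinate — and examines the associativity defect $\Phi({\bb m},{\bb s}):=\rho(t^{\bb m})\rho(t^{\bb s})-c\,\rho(t^{\bb m+\bb s})$. The Jacobi identity together with the $W_n$-action force $\Phi$ to satisfy a closed family of linear relations in the variables ${\bb m},{\bb s}$; uniform boundedness confines all the operators involved to finitely many finite-dimensional weight components, and feeding this into the relations yields $\Phi\equiv 0$.

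The main obstacle is exactly this last point: the vanishing of $\Phi$ in the case $t^{\bb 0}\neq 0$, which is the technical heart of \cite[Theorem~3.3]{kgx} and the place where uniform boundedness (rather than merely finite-dimensional weight spaces) is indispensable. By comparison, the submodule dichotomy and the analysis of the case $t^{\bb 0}=0$ are comparatively soft.
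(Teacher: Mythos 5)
Your overall architecture (dichotomy via the submodules $A_n\cdot V$ and $K$, reduction of case (1) to the theorem of \cite{er} after rescaling by $c^{-1}$, reduction of case (2) to Theorem \ref{prom 2}) matches how this result is organized, but the two steps that actually carry the theorem are asserted rather than proved, and you say so yourself: ``one shows that $A_n\cdot V$ cannot be all of $V$'' in the case $t^{\bb 0}=0$, and ``feeding this into the relations yields $\Phi\equiv 0$'' in the case $t^{\bb 0}=c\neq 0$. Neither claim follows from the formal bookkeeping you describe. The linear relations you extract from $[D({\bb u},{\bb r}),t^{\bb m}]=({\bb u}\,|\,{\bb m})t^{\bb m+\bb r}$ only show that the span of the defect operators $\Phi({\bb m},{\bb s})$ is stable under the $W_n$-action in a shifted sense; uniform boundedness of $V$ does not by itself force a $W_n$-stable family of weight-shifting operators to vanish (the operators $\rho(t^{\bb m})$ themselves are such a family and are nonzero in case (1)). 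So as written there is a genuine gap at exactly the point you call the technical heart.

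The missing idea is the reduction to the twisted Heisenberg--Virasoro algebra together with a local-nilpotency lemma, which is how \cite{kgx} proves Theorem 3.3 and how the present paper proves its loop analogues (see the proofs of Proposition \ref{prom1} and Theorem \ref{mthmub1}). Concretely: fix ${\bm\gamma}\in\C^n$ with $({\bm\gamma}\,|\,{\bb s})\neq 0$ for all ${\bb s}\neq{\bb 0}$ and $0\neq{\bb m}\in\Z^n$, and restrict $V$ to the subalgebra spanned by $\{D({\bm\gamma},i{\bb m}),\,t^{i{\bb m}}\,:\,i\in\Z\}$, a quotient of $\mathrm{HVir}$. Each sum $\bigoplus_{i\in\Z}V_{\lambda+{\bb r}+i{\bb m}}$ is a uniformly bounded module for it, so the Lu--Zhao classification (Theorem \ref{thm1}, i.e.\ the modules $W'(a,b,c)$ with $I(j)w_i=cw_{i+j}$) applies to its irreducible subquotients. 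When $t^{\bb 0}$ acts by $0$, every subquotient has $c=0$, which yields a least $N$ with $(t^{\bb m})^N V=0$; bracketing with $D({\bm\gamma},{\bb k}-{\bb m})$ then lowers the power and produces a nonzero vector annihilated by all $t^{\bb k}$, i.e.\ $K\neq 0$, and the submodule trick finishes case (2). When $t^{\bb 0}$ acts by $c\neq 0$, the same restriction forces $c\neq 0$ on the subquotients, and combined with the local-nilpotency statement (Lemma \ref{lemm1}, quoting \cite[Lemma 3.2]{kgx}) one produces a single nonzero $w$ with $t^{\bb m}t^{\bb n}w=t^{\bb m+\bb n}w$ for all ${\bb m},{\bb n}$; since $\{v:\Phi({\bb m},{\bb n})v=0\ \forall\,{\bb m},{\bb n}\}$ is an $\LL$-submodule, associativity holds on all of $V$ and \cite{er} applies. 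Without this input (or a substitute for it), your sketch does not constitute a proof; the paper itself does not reprove the statement but cites \cite[Theorem 3.3]{kgx}, whose argument is exactly the one outlined above.
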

The following result deals with the classification of irreducible Harish-Chandra $\LL(B)$-modules under some conditions:
\begin{theorem} \cite[Theorem 4.5]{serpc}
Let $V$ be an irreducible weight module for $\LL(B)$ with finite dimensional weight spaces. Assume that the action of $A_n\otimes B$ is associative on $V$ and $t^{\bb 0} .v=v$ for all $v \in V.$ Then $V$ is a single point evaluation $\LL(B)$ module and 
hence an irreducible $\LL$-module.
 \end{theorem}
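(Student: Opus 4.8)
The plan is to dispose of the abelian part $A_n\otimes B$ first and then force the Witt part $W_n\otimes B$ to collapse. Write $(V,\rho)$ for the representation. Since $t^{\bf 0}=t^{\bf 0}\otimes 1$ is central in $\LL(B)$ (because $[t^{\bf 0},x]=0$ for every $x\in\LL$) and acts as $1$ by hypothesis, $V$ is non-trivial, and as $\LL(B)$ shifts weights only by elements of $\Z^n$ and $V$ is irreducible, $\mathrm{Supp}(V)$ lies in one coset ${\bm\gamma}+\Z^n$, so $V=\bigoplus_{{\bf k}\in\Z^n}V_{{\bm\gamma}+{\bf k}}$ has countable dimension. \emph{Step 1: $A_n\otimes B$ acts through a point evaluation.} The map $\beta\colon B\to\mathrm{End}(V)$, $\beta(b)=\rho(t^{\bf 0}\otimes b)$, is a unital algebra homomorphism by the associativity hypothesis, and its image consists of $\LL(B)$-module endomorphisms because $t^{\bf 0}$ is central. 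By Schur's lemma (Dixmier's version, valid since $\dim_\C V\le\aleph_0$), $\mathrm{End}_{\LL(B)}(V)=\C$, so $\beta(b)=\eta(b)\,\mathrm{id}$ for an algebra homomorphism $\eta\colon B\to\C$; using associativity once more, $\rho(t^{\bf m}\otimes b)=\eta(b)\,\rho(t^{\bf m}\otimes1)$ for all ${\bf m}$. Each $\rho(t^{\bf m}\otimes1)$ is then invertible, hence carries weight spaces isomorphically onto one another; putting $W:=V_{{\bm\gamma}}$ we may identify $V$ with the free module $A_n\otimes_\C W$ on which $t^{\bf m}\otimes1$ acts by translation by ${\bf m}$, so that any operator of a fixed degree ${\bf m}$ commuting with every $\rho(t^{\bf s}\otimes1)$ is \emph{uniform}, i.e.\ of the form $t^{{\bm\gamma}+{\bf a}}\otimes w\mapsto t^{{\bm\gamma}+{\bf a}+{\bf m}}\otimes\Theta(w)$ for a single $\Theta\in\mathrm{End}(W)$. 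In particular $V$, viewed as an $\LL$-module via $x\mapsto x\otimes1$, is an $A_n$-free module of finite rank with $t^{\bf 0}$ acting as $1$, so it is a tensor field module $T(W,{\bm\gamma})$ for a finite-dimensional (possibly reducible) $\mathfrak{gl}_n$-module $W$ — the structure theorem underlying \cite{er}, whose irreducibility hypothesis is not needed for this normal form.

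\emph{Step 2: $W_n\otimes B$ collapses.} Let $\mathfrak m:=\ker\eta$, so $B/\mathfrak m\cong\C$; it suffices to prove $\rho(x\otimes c)=0$ for all $x\in W_n$ and $c\in\mathfrak m$, since then $\rho(\LL\otimes\mathfrak m)=0$ and $\rho$ factors through $\LL\otimes B/\mathfrak m\cong\LL$. Fix $c\in\mathfrak m$ and $x=D({\bf u},{\bf r})\in W_n$. From $[D({\bf u},{\bf r})\otimes c,\,t^{\bf s}\otimes1]=({\bf u}\,|\,{\bf s})\,t^{{\bf s}+{\bf r}}\otimes c$ together with $\rho(t^{{\bf s}+{\bf r}}\otimes c)=\eta(c)\,\rho(t^{{\bf s}+{\bf r}}\otimes1)=0$, the operator $\rho(x\otimes c)$ commutes with every $\rho(t^{\bf s}\otimes1)$, hence is uniform, given by some $\Theta_{x,c}\in\mathrm{End}(W)$ (and each $\rho(d_i\otimes c)$ is in particular a uniform degree-zero operator). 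Expanding both sides of
$$\rho([D({\bf u},{\bf r})\otimes c,\,D({\bf v},{\bf s})\otimes1])=[\,\rho(D({\bf u},{\bf r})\otimes c),\,\rho(D({\bf v},{\bf s})\otimes1)\,]$$
with the explicit tensor field action of $D({\bf v},{\bf s})\otimes1$ on $T(W,{\bm\gamma})$ produces a system of identities relating the $\Theta_{x,c}$, the $\mathfrak{gl}_n$-operators on $W$, and the operators $\rho(d_i\otimes c)$, and the task is to squeeze this system until $\Theta_{x,c}=0$. The endgame is conveniently packaged as follows: the subspace $\mathfrak k:=\{x\in\LL:\rho(x\otimes b)=\eta(b)\rho(x\otimes1)\ \forall\,b\in B\}$ is an ideal of $\LL$ — since $[x\otimes b,y\otimes1]=[x,y]\otimes b$, a member of $\mathfrak k$ pulls scalars out of brackets — and it contains $A_n$ by Step 1; as $\LL/A_n\cong W_n$ is simple, $\mathfrak k$ is $A_n$ or $\LL$, so it is enough to exhibit a single nonzero $x\in W_n$ with $\rho(x\otimes c)=0$ for all $c\in\mathfrak m$, whence $\mathfrak k=\LL$ and $V$ is a single point evaluation module (and thus, as remarked in the text, an irreducible $\LL$-module).

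\emph{Expected obstacle.} Step 1 is formal; the entire difficulty is in Step 2, namely extracting enough rigidity from the relations $\rho([W_n\otimes\mathfrak m,\,W_n\otimes1])$ and $\rho([W_n\otimes\mathfrak m,\,A_n\otimes1])$ — against the backdrop of the finite-dimensional $\mathfrak{gl}_n$-module $W$ — to annihilate the error operators $\rho(x\otimes c)$, $c\in\mathfrak m$. This is the $\LL\otimes B$-analogue of the single-point mechanism for $\mathrm{Vir}\otimes B$ recalled above from \cite{sav}, and I expect it to demand a comparably careful and somewhat lengthy calculation; the one ingredient one must not take for granted at the outset is the tensor field normal form for $V$ as an $\LL$-module, which is precisely what the hypotheses ``$A_n\otimes B$ associative'' and ``$t^{\bf 0}.v=v$'' secure in Step 1.
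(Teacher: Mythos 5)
The statement you were asked to prove is quoted in the paper from \cite{serpc} (Theorem 4.5 of that reference) without proof, so the only question is whether your argument is complete; it is not. Your Step 1 is correct and standard: $t^{{\bf 0}}\otimes b$ is central in $\LL(B)$, Dixmier's version of Schur's lemma applies because $V$ has countable dimension, and associativity then yields an algebra homomorphism $\eta\colon B\to\C$ with $\rho(t^{{\bf m}}\otimes b)=\eta(b)\,\rho(t^{{\bf m}}\otimes 1)$ and a free, translation-type action of $A_n\otimes 1$. Your ideal-theoretic reduction is also sound: $\mathfrak k=\{x\in\LL:\rho(x\otimes b)=\eta(b)\rho(x\otimes 1)\ \forall b\}$ is an ideal containing $A_n$, and since $\LL/A_n\cong W_n$ is simple it suffices to exhibit a single nonzero $x\in W_n$ with $\rho(x\otimes c)=0$ for all $c\in\ker\eta$. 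But that is where you stop. Step 2 is the entire content of the theorem, and you never exhibit such an $x$: the ``squeeze'' of the commutator identities $\rho([W_n\otimes\ker\eta,\,W_n\otimes 1])$ and $\rho([W_n\otimes\ker\eta,\,A_n\otimes 1])$ against the structure of the weight space $W$ is precisely the work of \cite{serpc}, and you explicitly defer it as an ``expected obstacle''. So the proposal has a genuine gap at its core; what is written is a correct reduction plus a plan, not a proof.

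A secondary inaccuracy worth flagging: you assert that the tensor-field normal form $V\cong T(W,{\bm\gamma})$ from \cite{er} holds with the irreducibility hypothesis dropped. That is not safe. For an $A_nW_n$-module with finite-dimensional weight spaces that is not irreducible, the correct normal form is a jet-type module: the action of $t^{{\bf m}}d_i$ on $W$ may involve higher-degree terms in ${\bf m}$ with nilpotent coefficient operators, and it is irreducibility (over $\LL$, not over $\LL(B)$) that forces these to vanish. Irreducibility of $V$ as an $\LL(B)$-module does not obviously kill them, and your Step 2 identities would have to be run against this larger system of operators; alternatively you would need to know first that $V$ is already irreducible over $\LL\otimes 1$, which is essentially a consequence of the statement being proved. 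Any completed argument must either work with the jet normal form or derive the evaluation property before invoking the tensor-field shape of $V$.
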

 Finally, we recall some results on $\mathrm{HVir}$ which will play an important role in this paper.
 For the Lie algebra HVir, the subalgebra spanned by $\{ x_0, I(0), C_{D}, C_{DI}, C_{I}\}$ plays a role of Cartan subalgebra. For $a, b, c \in \C$ consider HVir-module $W(a,b, c)$ with a basis $\{w_i | i \in \Z \}$ and the following action:
$$x_j. w_i = (a+i+bj)w_{i+j}, I(j).w_{i}= c w_{i+j}, C_D .w_i = C_{DI}.w_i = C_I . w_i = 0.$$  It is known that $W(a,b,c)$ is reducible iff $a \in \Z$, $b$ is either 0 or 1 and $c =0$. Let $W'(a,b,c)$ be a unique non-trivial irreducible sub-quotient of  
$W(a,b,c)$. The following is an important result from \cite[Theorem 4.4]{lrkz}.
\begin{thm} \label{thm1} 
Let $V$ be an irreducible  HVir-module with uniformly bounded weight spaces. Then $V \cong W'(a,b,c)$ for some $a,b,c \in \C$.
\end{thm}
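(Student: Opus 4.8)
The plan is to strip the central data from $\mathrm{HVir}$, realise $V$ as a module over $\LL_1 = W_1 \ltimes A_1$, and then invoke the classification of uniformly bounded $\LL_1$-modules (Theorem \ref{prom 3}) together with the Virasoro case (Theorem \ref{prom 5}). We may assume $V$ is non-trivial. First I would fix the central action and the support: since $\dim_{\C} V$ is countable, Schur's lemma gives that $C_D, C_{DI}, C_I$ act on $V$ as scalars, and because $[x_i, I(0)] = 0 = [I(j), I(0)]$ for all $i, j$ the element $I(0)$ is central and acts as a scalar $c$; moreover, from $[x_0, x_i] = i x_i$, $[x_0, I(j)] = j I(j)$ and irreducibility, $\mathrm{Supp}(V) \subseteq \lambda + \Z$ for a single $\lambda \in \C$, and $\dim V_\mu \le N$ for a fixed $N$.

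Next I would show that all three central charges vanish. The span of $\{x_i : i \in \Z\} \cup \{C_D\}$ is a copy of the Virasoro algebra and $V$ restricted to it is a uniformly bounded weight module, so each of its irreducible subquotients is trivial or some $V'_{\alpha,\beta}$ (Theorem \ref{prom 5}); since $C_D$ acts by $0$ on all of those, $C_D = 0$. The span of $\{I(j) : j \in \Z\} \cup \{C_I\}$ is an infinite-dimensional Heisenberg algebra, graded by $\mathrm{ad}\, x_0$; a uniformly bounded weight module over it has zero central charge — the standard partition/Fock-space estimate, since otherwise distinct monomials in the commuting operators $I(-1), I(-2), \dots$ would fill fixed weight spaces with unboundedly many linearly independent vectors — so $C_I = 0$. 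Finally, once $C_D = C_I = 0$, the element $C_{DI}$ survives as a genuinely non-trivial $1$-dimensional central extension cocycle of $\LL_1$ supported on $W_1 \otimes A_1$; a further uniform-boundedness estimate (of the same flavour as in \cite{kgx}) shows it too acts by $0$, so $C_{DI} = 0$.

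Now the representation factors through $\mathrm{HVir}/(C_D, C_I, C_{DI})$, whose brackets $[x_i, x_j] = (j-i) x_{i+j}$, $[x_i, I(j)] = j I(i+j)$, $[I(i), I(j)] = 0$ are exactly those of $\LL_1 = W_1 \ltimes A_1$ under $x_i \leftrightarrow t^i d_1$, $I(j) \leftrightarrow t^j$; thus $V$ is an irreducible uniformly bounded $\LL_1$-module on which $t^0 = I(0)$ acts by $c$. If $c = 0$, Theorem \ref{prom 3}(2) gives $A_1 . V = 0$, i.e. $I(j) V = 0$ for all $j$, so $V$ is an irreducible uniformly bounded $W_1$-module, whence $V \cong V'_{\alpha,\beta} = W'(\alpha, \beta, 0)$ by Theorem \ref{prom 5}. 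If $c \ne 0$, replacing each $I(j)$ by $c^{-1} I(j)$ (equivalently, using $t^j \leftrightarrow c^{-1} I(j)$) makes $t^0$ act as the identity, so Theorem \ref{prom 3}(1) applies: $A_1$ acts associatively and $V \cong T(U, \gamma)$ with $U$ an irreducible finite-dimensional $\mathfrak{gl}_1$-module, necessarily $1$-dimensional with generator acting by a scalar $b$; reading off the actions on the basis $\{t^{\gamma+k} : k \in \Z\}$ gives $x_j . t^{\gamma+k} = (\gamma + k + bj) t^{\gamma+k+j}$ and $I(j) . t^{\gamma+k} = c\, t^{\gamma+k+j}$, i.e. $V \cong W'(\gamma, b, c)$. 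In all cases $V \cong W'(a,b,c)$.

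The step I expect to be the main obstacle is the vanishing of the central charges in the second paragraph, and within it especially the vanishing of $C_{DI}$: $C_D$ is controlled by restriction to an honest Virasoro subalgebra and $C_I$ by the Fock-space growth argument, but $C_{DI}$ represents a non-trivial extension class of $\LL_1$, so its vanishing must be wrung directly out of uniform boundedness; once the central charges are gone, the rest is a short deduction from Theorems \ref{prom 3} and \ref{prom 5}.
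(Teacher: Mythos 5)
The paper itself offers no proof of Theorem \ref{thm1}: it is imported verbatim from \cite[Theorem 4.4]{lrkz}, so your argument has to stand entirely on its own, and as written it has two genuine gaps. The first is circularity. After killing the central charges you finish by quoting Theorem \ref{prom 3} (that is, \cite[Theorem 3.3]{kgx}) in the case $n=1$. But within the logical framework of this paper that result is not an admissible input for Theorem \ref{thm1}: the proof of \cite[Theorem 3.3]{kgx} is itself built on the L\"u--Zhao classification of uniformly bounded $\mathrm{HVir}$-modules --- the present paper says so explicitly in the proof of Proposition \ref{prom1} (``following the proof of Theorem 3.3(2) of \cite{kgx}, which relies on Theorem \ref{thm1}''), the mechanism being exactly restriction to the Heisenberg--Virasoro type subalgebras $\overline{\mathrm{HVir}}({\bm \beta},{\bb s})$. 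Moreover the paper recalls Theorem \ref{prom 3} only in the $n\geq 2$ discussion, and for $n=1$ the statement with the $A_1$-part adjoined is, up to the central charges, precisely the theorem you are trying to prove. So your reduction derives Theorem \ref{thm1} from a result whose proof presupposes it.

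The second gap is the vanishing of the central charges, which you yourself flag as the crux. The $C_D$ step is fine: a uniformly bounded weight module over $\mathrm{Vir}$ has finite length and Lemma \ref{prom 4} together with Theorem \ref{prom 5} force $C$ to act by zero on every irreducible subquotient. But for $C_I$ the ``partition/Fock-space estimate'' is not yet an argument: linear independence of the vectors obtained by applying distinct monomials in $I(-1),I(-2),\dots$ to a fixed vector is automatic only in a Fock-type module, i.e.\ when one has a vector annihilated by all positive modes; in a two-sided, uniformly bounded graded module no such vacuum vector need exist (none exists in $W'(a,b,c)$ with $c\neq 0$), and the usual Stone--von Neumann/Kac decomposition requires local nilpotency of the positive part, which you do not have. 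For $C_{DI}$ you give no argument at all beyond ``a further uniform-boundedness estimate shows it acts by $0$,'' and since $C_{DI}$ is a genuinely nontrivial cocycle nothing formal disposes of it. These vanishing statements, together with the resulting classification, are exactly the substance of \cite[Theorem 4.4]{lrkz}; as it stands your proposal reduces the theorem either to itself (through Theorem \ref{prom 3}) or to claims left unproved, so it cannot replace the citation.
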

Now consider $\overline{\mathrm{HVir}}$ be the  Lie algebra $\frac{\mathrm{HVir}}{\C C_D \oplus \C C_{DI} \oplus \C C_I}$ with the Cartan subalgebra $\C x_0 \oplus \C I(0)$. It is easy to see that as a Lie algebra
$\overline{\mathrm{HVir}}$ is isomorphic to $\big(\bigoplus_{j \in \mathbb Z} D({{\bm \beta}},  j{\bb s}) \ltimes \bigoplus_{k \in \Z}t^{k{\bb s}} \big)$, where $0 \neq {\bb s} \in \Z^n$ and ${\bm \beta} \in  \C^n$ such that $({\bm \beta}\, | \,{\bb s}) \neq 0$ with  
$x_i \mapsto \frac{D({\bm \beta}, i{\bb s})}{({\bm \beta}\, | \,{\bb s})}$ and $I(i) \mapsto \frac{t^{i{\bb s}}}{({\bm \beta}\, | \,{\bb s})}.$
Taking into account of the above isomorphism we denote the Lie algebra  $\big(\bigoplus_{j \in \mathbb Z} D({{\bm \beta}},  j{\bb s}) \ltimes \bigoplus_{k \in \Z}t^{k{\bb s}} \big)$ by $\overline{\mathrm{HVir}}({\bm \beta},  {\bb s}).$
 This paper is concerned with the classification of all the irreducible Harish-Chandra modules of $\mathfrak{L}_{S, n}(B)$. 
The following lemma will be used in this section. The proof of this lemma follows along similar lines as that of proof of Lemma 3.2 in cite{kgx}.
\begin{lemma}\label{lemm1}
Let $V$ be an irreducible $\LL(B)$-module such that $u.v = 0$ for some $u \in U(A_n \otimes B)$ and some non-zero vector $v$ in V. Then $u$ is locally nilpotent on $V$.
\end{lemma}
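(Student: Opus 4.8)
The plan is to run the standard ``nilpotency via finite-dimensionality of weight spaces'' argument, exactly as in \cite[Lemma 3.2]{kgx}, adapted to the associative algebra $A_n \otimes B$ acting inside $U(\LL(B))$ on an irreducible module. First I would reduce to the case where the given element $u$ is homogeneous: writing $u = \sum_{\bm\mu} u_{\bm\mu}$ as a sum of weight vectors (with respect to the $H_1$-grading on $U(A_n\otimes B)$, which is well-defined since $A_n\otimes B$ is abelian and $H=\bigoplus\C d_i$ acts semisimply), the hypothesis $u.v=0$ for a weight vector $v$ can be replaced by $u_{\bm\mu}.v=0$ for each $\bm\mu$ separately, because the vectors $u_{\bm\mu}.v$ live in distinct weight spaces and hence are linearly independent. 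So it suffices to show: if $u \in U(A_n\otimes B)$ is a weight vector of weight $\bm\mu$ and $u.v=0$ for some nonzero $v \in V_{\bm\lambda}$, then $u$ acts locally nilpotently on $V$.

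Next I would exploit that $A_n \otimes B$ is commutative and that $V$ is irreducible, so that $V = U(\LL(B)).v$. The key point is that for any homogeneous $x \in \LL(B)$, the commutator $[x, u]$ again lies in $U(A_n\otimes B)$ (since $A_n\otimes B$ is an ideal in $\LL(B)$ as a Lie algebra — $W_n$ acts on $S\otimes A_n \otimes B$ by derivations, preserving it) and is again a weight vector. Iterating, $\mathrm{ad}(x_1)\cdots\mathrm{ad}(x_r)(u)$ stays inside the commutative algebra $U(A_n\otimes B)$. Using the PBW-type expansion, for any monomial $X = x_1\cdots x_r \in U(\LL(B))$ one can write $u X = \sum_j Y_j u_j'$ where each $u_j' \in U(A_n\otimes B)$ is obtained from $u$ by a sequence of brackets with the $x_i$ and each $Y_j$ is a monomial in $U(\LL(B))$ of degree $\le r$. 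I would then argue that the set $\{\, w \in U(A_n\otimes B) : w \text{ is a weight vector of weight } \bm\mu,\ w.v = 0 \,\}$ is stable in a suitable sense under these brackets — more precisely, following the $\cite{kgx}$ argument, one shows that if $u.v=0$ then $u.(V_{\bm\lambda + \bm\nu}) = 0$ for weights $\bm\lambda+\bm\nu$ reachable from $\bm\lambda$, and since $V$ is generated by $v$ and $A_n\otimes B$-action shifts weights by lattice vectors, $u$ annihilates a weight-graded subspace whose complement in each weight space is finite-dimensional; on that finite-dimensional complement $u$ (being a weight vector raising weight by the fixed $\bm\mu$, which cannot be $0$ unless $u$ is a polynomial in $t^{\bm 0}\otimes B$) maps a finite chain of weight spaces around and hence is nilpotent, while on the annihilated part it is zero — so $u$ is locally nilpotent. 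The case $\bm\mu = 0$ (so $u \in U(t^{\bm 0}\otimes B)$) is handled separately: here $u$ preserves each weight space $V_{\bm\lambda}$, which is finite-dimensional, and $u.v=0$ together with irreducibility forces $u$ to have no invertible part, giving local nilpotence.

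The main obstacle I anticipate is making the ``weight-chain'' bookkeeping precise: one must show that $u.v = 0$ propagates to $u.w = 0$ for $w$ in a cofinite-codimension (weight-graded) subspace of $V$, which requires carefully tracking how $\mathrm{ad}$ of Witt-algebra elements $t^{\bm r}d_i \otimes b$ moves $u$ within $U(A_n\otimes B)$ and using that these brackets, applied to a weight vector $u$, produce weight vectors of controlled weights whose annihilation of $v$ can be deduced from $u.v=0$ by a downward induction on the degree of the monomial $X$ applied — this is the technical heart of \cite[Lemma 3.2]{kgx} and the proof here would cite and mirror it. I would organize it as: (i) reduce to homogeneous $u$; (ii) show $\mathcal{I} := \{w : u.w = 0\}$ contains a large graded piece by the commutator computation; (iii) dispatch the finite-dimensional complement; (iv) treat $\bm\mu=0$ directly. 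The remark in the statement that ``the proof follows along similar lines as Lemma 3.2 in \cite{kgx}'' suggests the authors intend exactly this, so the write-up can be kept brief.
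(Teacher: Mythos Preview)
Your proposal misidentifies the mechanism of the \cite{kgx} argument. The lemma, as stated, does not assume that $V$ is a weight module or that weight spaces are finite-dimensional, so steps (ii)--(iv) of your plan cannot be what is intended; more importantly, the set $\mathcal{I}=\{w\in V: u.w=0\}$ is \emph{not} large in general. For instance, if $u=t^{\bf m}\otimes 1$ with ${\bf m}\neq 0$ and $u.v=0$, then for $w=(t^{\bf r}d_i\otimes 1).v$ one computes $u.w=[u,t^{\bf r}d_i\otimes 1].v=-m_i(t^{\bf m+r}\otimes 1).v$, which is typically nonzero. So the kernel of $u$ does not propagate along the $U(\LL(B))$-action, and the ``cofinite annihilation plus finite-dimensional complement'' picture is simply wrong here. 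The reduction to homogeneous $u$ is also unnecessary (and the lemma does not assume $v$ is a weight vector).

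The actual argument of \cite[Lemma 3.2]{kgx} is purely algebraic and much shorter. Because $A_n\otimes B$ is an abelian ideal of $\LL(B)$, for any $u\in U(A_n\otimes B)$ and any $x\in\LL(B)$ one has $\mathrm{ad}(u)(x)\in U(A_n\otimes B)$ and hence $\mathrm{ad}(u)^2(x)=0$; by the Leibniz rule $\mathrm{ad}(u)^{r+1}$ vanishes on the filtration piece $U_r(\LL(B))$. Since $L_u$ and $R_u$ commute in $U(\LL(B))$, the binomial identity
\[
u^N X \;=\; \sum_{k=0}^{N}\binom{N}{k}\,\mathrm{ad}(u)^k(X)\,u^{N-k}
\]
holds for every $X\in U(\LL(B))$. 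Taking $X\in U_r(\LL(B))$ and $N\ge r+1$, every surviving term has $N-k\ge 1$, so $u^{N-k}.v=0$ and thus $u^{N}(X.v)=0$. Irreducibility gives $V=U(\LL(B)).v$, so every vector lies in some $U_r(\LL(B)).v$, and $u$ is locally nilpotent. You already isolated the key facts (abelian ideal, $[x,u]\in U(A_n\otimes B)$), but you should apply them to powers $u^N$ via this commutation formula rather than trying to enlarge $\ker u$.
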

\subsection{Reduction to $\LL(B)$ or $W_n (B)$-modules} 
 We start with the following:
 \begin{prop} \label{prom1}
 Let $V$ be an irreducible uniformly bounded $\mathfrak{L}_{S, n}(B)$-module. 
 \begin{enumerate}[label= \((\arabic*)\)]
 \item Let $(S \otimes \C t^{\bb 0}) \otimes 1$ act non-tivially on V. Then there exists a subspace $\overset{\circ}{S}$ of  $S$ of co-dimension $1$ such that $(\overset{\circ}{S} \otimes A_n) \otimes B$ acts trivially on V.
  In particular $V$ is an irreducible $\LL(B)$-module.
 \item  If $(S \otimes \C t^{\bb 0}) \otimes 1$ acts trivially, then $V$ is an irreducible $W_n (B)$-module.
 \end{enumerate}
  \end{prop}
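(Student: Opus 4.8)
The plan is to exploit irreducibility together with the uniform boundedness hypothesis to show that the "large" part of $S \otimes A_n$ must annihilate $V$. Write $S \otimes \C t^{\bb 0}$ (tensored with $1 \in B$) for the obvious abelian subalgebra; it commutes with the Cartan $H_1$ and hence preserves each weight space $V_\lambda$. In case (2), where this subalgebra acts trivially, I would first observe that the whole of $(S \otimes A_n) \otimes B$ is generated, as an ideal in $\LL_{S,n}(B)$, by $(S \otimes \C t^{\bb 0}) \otimes B$ together with brackets with $W_n$; more precisely, $[D(\bb u, \bb r), s \otimes t^{\bb 0}] = (\bb u | \bb 0)\, s \otimes t^{\bb r} = 0$, so that route is not immediate — instead I would use the bracket $[D(\bb u, \bb r)\otimes b,\ s \otimes t^{\bb m}\otimes b'] = (\bb u | \bb m)\, s\otimes t^{\bb m + \bb r}\otimes bb'$. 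The point is that the span of all $s \otimes t^{\bb m} \otimes b$ with $\bb m \neq \bb 0$ is recovered from the $\bb m = \bb 0$ elements only after multiplying by $t^{-\bb m}$, which is not available; so instead I consider the $\LL_{S,n}(B)$-submodule generated by a single weight vector annihilated by $(S\otimes\C t^{\bb 0})\otimes 1$ and show, using the commutation relations, that $(S\otimes A_n)\otimes B$ acts locally nilpotently, then invoke an argument in the style of Lemma \ref{lemm1} to upgrade local nilpotence to annihilation on all of $V$ by irreducibility. Concretely: if $s \otimes t^{\bb 0} \otimes 1$ kills $V_\lambda$, then for any $\bb m$ the element $s \otimes t^{\bb m}\otimes b$ raises weight by $\bb m$, its various powers land in distinct weight spaces only finitely often before the weight escapes any fixed finite region, but uniform boundedness forces $s\otimes t^{\bb m}\otimes b$ to be nilpotent; then one shows the set of $v$ with $(S\otimes A_n)\otimes B . v = 0$ is a nonzero submodule, hence all of $V$.

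For case (1), the heart of the matter is a commutative-algebra reduction. Let $\pi\colon S \to \mathrm{End}(V_\lambda)$ be the action of $S\otimes\C t^{\bb 0}\otimes 1$ restricted to a fixed weight space $V_\lambda$; since $S$ is abelian these are commuting operators, and since the full $(S\otimes A_n)\otimes B$ is abelian, all of $s\otimes t^{\bb m}\otimes b$ for varying $\bb m, b$ also commute with the $\pi(s')$. The strategy is to show that the kernel $\overset{\circ}{S} := \{s \in S : s \otimes t^{\bb 0}\otimes 1 \text{ acts as } 0 \text{ on } V\}$ has codimension $\le 1$. Suppose not; pick $s_1, s_2$ linearly independent modulo $\overset{\circ}{S}$. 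I would use the relation $[s_i \otimes t^{\bb m}, D(\bb u, \bb r)] $ coupling the $A_n$-degree to a Witt element and, crucially, the fact that $[s_i \otimes t^{\bb m}\otimes b,\ s_j\otimes t^{\bb k}\otimes b'] = 0$, to build, out of two independent directions in $S$, elements acting as raising/lowering operators whose iterated application produces arbitrarily many linearly independent vectors in a single weight space, contradicting uniform boundedness — this is the loop-algebra analogue of the argument sketched in the introduction for highest weight modules. The cleanest implementation is probably to fix a weight $\lambda$ with $V_\lambda \ne 0$, choose $\bb m$ with $(\bb u | \bb m)\ne 0$ for some $D(\bb u,\bb 0)$, and track how $s_1\otimes t^{\bb m}\otimes b$ and $s_2\otimes t^{-\bb m}\otimes b'$ generate, together with $W_n$, an algebra acting with infinite-dimensional orbits on $V_\lambda$ unless one of $s_1, s_2$ is already in $\overset{\circ}{S}$.

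Once $\overset{\circ}{S}$ is known to have codimension $\le 1$, I would check it is actually a subalgebra-ideal of the relevant kind: because $S$ is abelian and $W_n$ acts on $S\otimes A_n$ only through the $A_n$-factor, $\overset{\circ}{S}\otimes A_n\otimes B$ is automatically an $\LL_{S,n}(B)$-submodule of the adjoint-type action on $V$ in the sense that it is stable and acts by operators forming a two-sided structure; an argument parallel to Lemma \ref{lemm1} (local nilpotence of any $u \in U((\overset{\circ}{S}\otimes A_n)\otimes B)$ that kills some nonzero vector, promoted via irreducibility) then shows $(\overset{\circ}{S}\otimes A_n)\otimes B$ annihilates all of $V$. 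After quotienting by this ideal, $V$ becomes a module over $\LL_{S,n}(B)/((\overset{\circ}{S}\otimes A_n)\otimes B) \cong \big(W_n \ltimes (S/\overset{\circ}{S}\otimes A_n)\big)\otimes B \cong \LL(B)$, since $S/\overset{\circ}{S}$ is one-dimensional; irreducibility is preserved. In case (2), after the local-nilpotence-plus-irreducibility step kills all of $(S\otimes A_n)\otimes B$, the quotient is $W_n(B)$ and we are done.

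The main obstacle I anticipate is the codimension bound in case (1): turning "two independent directions in $S$" into a genuine contradiction with uniform boundedness requires care because the operators $s\otimes t^{\bb m}\otimes b$ all commute with each other, so naive "raising operator" arguments that rely on noncommutativity do not apply directly. The right move is almost certainly to bring in $W_n$ (or just a rank-one Witt subalgebra, effectively an $\overline{\mathrm{HVir}}(\bm\beta,\bb s)$-type subalgebra), use the mixed brackets $[D(\bb u,\bb r)\otimes b,\ s\otimes t^{\bb m}\otimes b'] = (\bb u|\bb m)\,s\otimes t^{\bb m+\bb r}\otimes bb'$ to manufacture non-commuting relations, and then quote or adapt the HVir-module classification (Theorem \ref{thm1}) applied to each $\overline{\mathrm{HVir}}(\bm\beta,\bb s)$ acting on $V$ to pin down that at most one copy of $S$ can survive. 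I would expect the authors' proof of Proposition \ref{prom1} to follow essentially this HVir-subalgebra route.
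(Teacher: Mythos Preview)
Your endpoint is right --- the HVir subalgebra together with Theorem \ref{thm1} is exactly what the paper uses --- but you take two detours, one unnecessary and one genuinely flawed.

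First, the codimension bound on $\overset{\circ}{S}$ needs no contradiction argument. The subspace $(S\otimes\C t^{\bb 0})\otimes 1$ is \emph{central} in $\LL_{S,n}(B)$, so by Schur's lemma each element acts as a scalar on the irreducible module $V$. Thus the action of $S\otimes\C t^{\bb 0}\otimes 1$ is given by a linear functional $\chi\colon S\to\C$, and $\overset{\circ}{S}:=\ker\chi$ has codimension $\le 1$ immediately (equal to $1$ in case (1), equal to $\dim S$ in case (2)). Your proposed two-directions-in-$S$ argument is never needed.

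Second, your nilpotence step ``its various powers land in distinct weight spaces only finitely often \ldots\ uniform boundedness forces $s\otimes t^{\bb m}\otimes b$ to be nilpotent'' does not work as stated: for $\bb m\neq\bb 0$ the vectors $(s\otimes t^{\bb m}\otimes b)^k v$ live in the pairwise distinct spaces $V_{\lambda+k\bb m}$, and a uniform bound on $\dim V_\mu$ says nothing about how many of these are nonzero. The actual mechanism is the one you anticipate at the end. For $y\in\overset{\circ}{S}$ and $\bb m\neq\bb 0$, choose $\bm\gamma\in\C^n$ with $(\bm\gamma\,|\,\bb s)\neq 0$ for all nonzero $\bb s$, and look at the $\overline{\mathrm{HVir}}$-type subalgebra spanned by $D(\bm\gamma,i\bb m)$ and $y\otimes t^{i\bb m}\otimes 1$. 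Each string $\bigoplus_{i\in\Z}V_{\lambda+\bb r+i\bb m}$ is a uniformly bounded module for this subalgebra with $I(0)=y\otimes t^{\bb 0}\otimes 1$ acting by $0$; now Theorem \ref{thm1} (via the argument of \cite[Theorem 3.3(2)]{kgx}) yields a least $N$ with $(y\otimes t^{\bb m}\otimes 1)^N V=0$. The step you do not spell out is the commutator trick: applying $D(\bm\gamma,\bb k-\bb m)\otimes b$ to $(y\otimes t^{\bb m}\otimes 1)^N V=0$ gives
\[
(\bm\gamma\,|\,\bb m)\,N\,(y\otimes t^{\bb k}\otimes b)\,(y\otimes t^{\bb m}\otimes 1)^{N-1}V=0
\]
for all $\bb k\in\Z^n$, $b\in B$. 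By minimality of $N$ there is a nonzero vector annihilated by every $y\otimes t^{\bb k}\otimes b$; the set of such vectors is an $\LL_{S,n}(B)$-submodule, hence all of $V$. Running this for each $y$ in a basis of $\overset{\circ}{S}$ finishes (1), and the identical argument with $\overset{\circ}{S}=S$ finishes (2).
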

  \begin{proof}
  Note that $(S \otimes \C t^{\bb 0} )\otimes 1$ is central in $\mathfrak L_{S,n}(B)$, hence its elements act as  scalars on $V.$ Since $(S \otimes \C t^{\bb 0}) \otimes 1$ acts non-trivially on $V$,  there exists a co-dimension one subspace $\overset{\circ}{S}$ of $S$ such that $(\overset{\circ}{S} \otimes \C t^{\bb 0}) \otimes 1.V=0$.
Let us choose a ${\bm \gamma} \in \mathbb C^n$ such that $({\bm \gamma}, {\bb s})\neq 0$ for all non-zero ${\bb s} \in \mathbb \Z^n$. Fix a non-zero ${\bb m} \in \mathbb Z^n$ and some $y \in \overset{\circ}{S}.$ Now consider the Heisenberg-Virasoro algebra  $\mathrm{HVir}({\bm \gamma}, {\bb m},y)= \mathrm{span} \{D({\bm \gamma} \,| i {\bb m}), y\otimes t^{i {\bb m}} \otimes 1 \, \, | \,\, i \in \mathbb Z  \}$. For ${\bb r} \in \Z^n$ consider the uniformly bounded $\mathrm{HVir}({\bm \gamma}, {\bb m}, y)$-module $\displaystyle{\bigoplus_{i \in \mathbb Z}V_{\lambda+ {\bb r} +
i {\bb m}}}$. Now, following the proof of Theorem 3.3(2) of \cite{kgx}, which relies on Theorem \ref{thm1}, we find a least positive integer $N$ such that $(y\otimes t^{\bb m} \otimes 1)^N.V=0.$ But then we have
$$D({\bm \gamma},{\bb k-m})\otimes b.(y\otimes t^{\bb m} \otimes 1)^N.V= ({\bm \gamma} | {\bb m})Ny\otimes t^{\bb k}\otimes b.(y\otimes t^{\bb m}\otimes 1)^{N-1}.V=0,  $$
for all ${\bb k} \in \mathbb Z^n, b \in B.$ In particular, there exists a non-zero $u \in V$  such that $y \otimes t^{\bb r} \otimes b.u=0$ for all ${\bb r} \in \mathbb Z^n , b \in B.$  Then consider the set $W: =\{v \in V: y \otimes t^{\bb r} \otimes b.v=0$ for all ${\bb r} \in \mathbb Z^n, b \in B\}$, which is a non-zero submodule of $V$ so must be $V$ itself.
This completes the proof of (1). Similar argument as that of proof of (1) will work for (2).
  \end{proof}
  \subsection{$t^{{\bb 0}} \otimes 1$ acting as an identity}
  By above proposition we can assume $V$ to be an irreducible uniformly bounded $\LL(B)$-module. In the next proposition we work with the assumption that $t^{\bb 0} \otimes 1$ acting non-zero scalar say c. By taking a Lie algebra automorphism of $A_n$ which assigns $t^{\bb s} \mapsto \frac{t^{\bb s}}{c}$, we may assume $c = 1$. We have the following:
  
  \begin{theorem}\label{mthmub1}
Let $V$ be an irreducible uniformly bounded module for $\mathfrak{L}(B)$. If $t^{\bb 0}\otimes 1$ acts as an identity on $V$, then $V$ is a single point evaluation module.
\end{theorem}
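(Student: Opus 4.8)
The plan is to deduce the statement from \cite[Theorem 4.5]{serpc}, which asserts that an irreducible weight $\LL(B)$-module with finite-dimensional weight spaces on which $A_n\otimes B$ acts associatively and $t^{\bb 0}\otimes 1$ acts as the identity is a single point evaluation module. Thus it suffices to prove that the action of $A_n\otimes B$ on $V$ is associative, i.e. $(t^{\bb m}\otimes b_1)(t^{\bb s}\otimes b_2).v=(t^{\bb m+s}\otimes b_1b_2).v$ for all ${\bb m},{\bb s}\in\Z^n$, $b_1,b_2\in B$ and $v\in V$. First observe that $t^{\bb 0}\otimes B$ is central in $\LL(B)$, since $[D({\bb u},{\bb r})\otimes b',\,t^{\bb 0}\otimes b]=({\bb u}\,|\,{\bb 0})\,t^{\bb r}\otimes b'b=0$ and $A_n\otimes B$ is abelian; hence $t^{\bb 0}\otimes b$ acts on $V$ by a scalar $\chi(b)$, where $\chi\colon B\to\C$ is linear with $\chi(1)=1$. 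I would reduce associativity to two statements: (a) there is a maximal ideal $\mathfrak m$ of $B$ with $(A_n\otimes\mathfrak m).V=0$, and (b) $A_n\otimes 1$ acts associatively on $V$. Granting (a), one gets $\chi(m)=0$ for $m\in\mathfrak m$, so $\ker\chi=\mathfrak m$ by a codimension count; then $\chi$ is an algebra homomorphism and $(a\otimes b).v=\chi(b)(a\otimes 1).v$ for all $a\in A_n,\ b\in B$, so (b) yields the full associativity.

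For (b) I would restrict $V$ to the subalgebra $\LL\cong\LL\otimes 1$. It is a uniformly bounded $\LL$-module, and on each of its composition factors $Q$ the element $t^{\bb 0}$ acts as the identity, so by Theorem~\ref{prom 3}(1) the action of $A_n$ on $Q$ is associative. Consequently each operator $u_{{\bb m},{\bb s}}:=(t^{\bb m}\otimes 1)(t^{\bb s}\otimes 1)-(t^{\bb m+s}\otimes 1)\in U(A_n\otimes B)$ acts as $0$ on every composition factor of $V|_{\LL}$, hence annihilates a nonzero weight vector of $V$, so by Lemma~\ref{lemm1} it is locally nilpotent on $V$. An argument in the style of \cite[Lemma 3.2, Theorem 3.3]{kgx} — using irreducibility of $V$ together with the fact that $\mathrm{Supp}(V)$ is, up to at most one point, a translate of a full lattice, so that a locally nilpotent element of $U(A_n\otimes B)$ whose kernel meets every $\Z$-line of weights must vanish — then forces $u_{{\bb m},{\bb s}}=0$ on $V$, which is (b).

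For (a) I would fix ${\bm \gamma}\in\C^n$ with $({\bm \gamma}\,|\,{\bb k})\neq 0$ for all ${\bb k}\in\Z^n\setminus\{{\bb 0}\}$ and use the twisted Heisenberg--Virasoro subalgebras. Given ${\bb s}\in\Z^n\setminus\{{\bb 0}\}$ and $b\in B$, the span of $\{D({\bm \gamma},i{\bb s})\otimes 1,\ t^{j{\bb s}}\otimes b\mid i,j\in\Z\}$ is a homomorphic image of $\mathrm{HVir}$, and for every $\la\in\mathrm{Supp}(V)$ the line module $\bigoplus_{k\in\Z}V_{\la+k{\bb s}}$ is uniformly bounded over it; by Theorem~\ref{thm1} its irreducible subquotients are isomorphic to $W'(a',b',\chi(b))$, the last parameter being $\chi(b)$ because $t^{\bb 0}\otimes b$ acts by that scalar throughout $V$. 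Passing to the larger rank-one subalgebra $\mathrm{span}\{D({\bm \gamma},i{\bb s})\otimes 1,\ t^{j{\bb s}}\otimes b\mid i,j\in\Z,\ b\in B\}$ and invoking the known classification of uniformly bounded Harish-Chandra modules for Heisenberg--Virasoro type algebras in rank one (\cite{wrc},\cite{sav}), each composition factor of $\bigoplus_{k}V_{\la+k{\bb s}}$ is a single point evaluation module over that subalgebra, so $t^{j{\bb s}}\otimes B$ acts on it through $B/\mathfrak m_{{\bb s},\la}$ for a maximal ideal $\mathfrak m_{{\bb s},\la}$; comparing with the global scalar $\chi$ forces $\mathfrak m_{{\bb s},\la}=\ker\chi=:\mathfrak m$ for all ${\bb s}$ and $\la$, and $\mathfrak m$ is a maximal ideal of $B$. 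Then for $m\in\mathfrak m$ and ${\bb k}\neq {\bb 0}$ the operator $t^{\bb k}\otimes m$ annihilates a weight vector of $V$, hence is locally nilpotent on $V$ by Lemma~\ref{lemm1}, while $t^{\bb 0}\otimes m$ acts as $0$; since $\{v\in V:(A_n\otimes\mathfrak m).v=0\}$ is an $\LL(B)$-submodule (here one uses that $\mathfrak m$ is an ideal) and $V$ is irreducible, a common-kernel argument as in \cite[Theorem 3.3]{kgx} gives $(A_n\otimes\mathfrak m).V=0$, which is (a).

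I expect the crux of the proof to be the patching in step (a): one must show that the evaluation point read off from the various rank-one Heisenberg--Virasoro subalgebras — a priori depending on the direction ${\bb s}$, on the chosen line, and on the composition factor — is one and the same maximal ideal $\mathfrak m$ of $B$, and then convert this into the clean annihilation $(A_n\otimes\mathfrak m).V=0$ by means of Lemma~\ref{lemm1}. The centrality of $t^{\bb 0}\otimes B$ and the local-nilpotence criterion of Lemma~\ref{lemm1} are essentially the only tools available, and organising them correctly is where uniform boundedness is genuinely used; this is the analogue, in the presence of the coefficient algebra $B$, of the associativity statement proved for $\LL$ in \cite[Theorem 3.3]{kgx}.
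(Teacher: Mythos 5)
Your proposal follows the same skeleton as the paper's proof: reduce to \cite[Theorem 4.5]{serpc} by proving associativity of $A_n\otimes B$, use centrality of $t^{\bb 0}\otimes B$ to get a linear functional $\chi$ with $\chi(1)=1$, show that $\ker\chi$ is a maximal ideal with $(A_n\otimes\ker\chi).V=0$, and then treat $A_n\otimes 1$ separately. The two sub-steps, however, are argued by different means. For your step (a) the paper works directly and with lighter input: for $b\in\ker\chi$ it considers the subalgebra spanned by $D({\bm\gamma},i{\bb m})$ and $t^{i{\bb m}}\otimes b$, applies Theorem \ref{thm1} (following \cite[Theorem 3.3(2)]{kgx}) to get a minimal $N_0$ with $(t^{\bb m}\otimes b)^{N_0}.V=0$, and then brackets with $D({\bm\gamma},{\bb k-m})\otimes b'$ to produce a single vector killed by all $t^{\bb k}\otimes bb'$; the submodule trick upgrades this to $(t^{\bb k}\otimes bB).V=0$, and letting $b$ vary gives both the annihilation and the maximality of $\ker\chi$. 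You instead invoke the rank-one loop classifications of \cite{wrc,sav} on each line module and patch the evaluation ideals through the central character; this is plausible but uses much heavier machinery, and as written the subalgebra $\mathrm{span}\{D({\bm\gamma},i{\bb s})\otimes 1,\,t^{j{\bb s}}\otimes b\}$, with the Witt part untensored, is not one of the algebras covered by \cite{wrc} or \cite{sav} -- you should loop the whole rank-one subalgebra, i.e.\ work with $\overline{\mathrm{HVir}}({\bm\gamma},{\bb s})\otimes B$, to make the citation legitimate. For your step (b), the composition-series argument for $V|_{\LL\otimes 1}$ together with Theorem \ref{prom 3}(1) is a genuinely different (and arguably cleaner) route than the paper's appeal to the techniques of \cite[Theorem 3.2(1)]{kgx}: the bottom composition factor is a nonzero subspace annihilated simultaneously by every defect operator $u_{{\bb m},{\bb s}}$, and once (a) is available the common kernel $\{v\in V: t^{\bb m}t^{\bb s}v=t^{{\bb m+s}}v \ \forall\, {\bb m},{\bb s}\}$ is an $\LL(B)$-submodule, hence all of $V$; phrased this way you need neither Lemma \ref{lemm1} nor the imprecise ``a locally nilpotent element whose kernel meets every $\Z$-line must vanish'' principle.

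One caution on the end of your step (a): local nilpotence of each individual operator $t^{\bb k}\otimes m$ (via Lemma \ref{lemm1}) does not by itself yield a vector annihilated by all of $A_n\otimes\mathfrak m$ at once, which is what the submodule argument requires. The missing ingredient is exactly the minimal-power-plus-bracket computation of the paper (equivalently, your one-direction annihilation followed by bracketing with $D({\bm\gamma},{\bb k-s})\otimes b'$ to spread it over all of $\Z^n$ and all of $B$, then varying $b$). You point to \cite[Theorem 3.3]{kgx} for this ``common-kernel argument'', which is the right source and matches the paper's own level of detail, but be aware that this, not the local nilpotence, is the crux of the step.
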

\begin{proof}
We will use Theorem 4.5 of \cite{serpc}, by which we will be done if we show that $A_n \otimes B$ acts associatively on $V$, i.e., $t^{\bb r}\otimes b_1 t^{{\bb s}} \otimes b_2.v=t^{{\bb r+s}}\otimes b_1b_2.v$ for all ${\bb r,s} \in \mathbb Z^n, b_1,b_2 \in B, v \in V.$
First as $1\otimes B$ is central in $\mathfrak{L}(B)$ there exists a linear map $\psi:B \to \mathbb C$ such that $\psi(1)=1$. If $b \in \mathrm{Ker} \,\psi$, then $1 \otimes b. v = 0$ for all $v \in V$.

\noindent
{\bf Claim :} $(t^{\bb r} \otimes bb').V=0$, for all $b \in \mathrm{Ker}\, \psi , b' \in B,{\bb r} \in \mathbb Z^n$. In particular, $\mathrm{Ker} \,\psi$ is a maximal ideal of $B$.

\noindent
Let ${\bm \gamma} \in \mathbb C^n$ be such that $({\bm \gamma} \,|\, {\bb s})\neq 0$ for all non-zero ${\bb s} \in \mathbb Z^n$. Fix $0 \neq {\bb m} \in \Z^n$ and $b \in \mathrm{Ker}\, \psi$ and consider the Lie algebra $\mathrm{HVir}({\bm \gamma},{\bb m},b) = \mathrm{span} \, \{D({\bm \gamma}, i{\bb m}), t^{i {{\bb m}}}\otimes b \, \, | \,\, i \in \mathbb Z  \}$.
Now using similar arguments as that of Theorem 3.2 (2) of \cite{kgx}, find a smallest positive integer $N_0$ such that $(t^{\bb m} \otimes b)^{N_0}.V=0$ for all ${ \bb m} \in \mathbb Z^n.$ Then we have
 $$D({\bm \gamma}, {\bb k -m}) \otimes b'.(t^{\bb m} \otimes b)^{N_0}.V = ({\bm \gamma} \,|\, {\bb s})N_0 \, (t^{\bb k} \otimes bb') \, (t^{\bb m} \otimes b)^{N_0 -1}.V = 0 ,$$
 for all ${\bb k} \in \mathbb Z^n$ and $b' \in B.$
 Hence we have  $V' = \{v \in V \mid  t^{\bb r} \otimes bb'.v= 0 \, \forall \, \, {\bb r} \in \Z^n, \, \forall \, b' \in B \}$ is a non-zero submodule of $V$. This proves the claim. 
 Note that $b -\psi(b)1 \in \mathrm{Ker}\, \psi$, therefore from the above claim it follows that, $t^{\bb r} \otimes b.v=\psi(b)t^{\bb r}.v$ for all ${\bb r} \in \mathbb Z^n, b \in B, v \in V$.
  Since $\mathrm{Ker} \,\,\psi$ is maximal ideal in $B$ of co-dimension one there exists a non-zero algebra homomorphism $\eta:B \to \mathbb C$ such that $\eta=c \psi$. Now $\eta(1)=\eta(1)\eta(1)$ with the facts $\psi(1)=1$ and $\eta$ non-zero gives us $c=1.$ Hence we have,
 \begin{align}
     t^{\bb r} \otimes b.v=\eta(b)t^{\bb r}.v,
 \end{align}
  for all ${\bb r} \in \mathbb Z^n, b \in B, v \in V$ and for some algebra homomorphism $\eta$. Hence to prove the associativity of $A_n \otimes B$ we have to prove the associativity of $A_n \otimes 1.$ Now using the techniques of
 proof of Theorem 3.2 (1) of \cite{kgx}, which involve Lemma \ref{lemm1}, we can find a non-zero $w \in V$ such that $t^{\bb m}.t^{\bb n}.w=t^{{\bb m+n}}.w$ for all ${\bb m,n} \in \mathbb Z^n$. Finally note that $W=\{v \in V: t^{\bb m}.t^{{\bb n}}.v=t^{{\bb m+n}}.v\} $ is a non-zero $\mathfrak{L}(B)$-submodule of $V$. Hence the proof is complete.
\end{proof}
 \subsection{$t^{{\bb 0}} \otimes 1$ acting trivially}
Now we concentrate on the case where $t^{{\bb 0}}\otimes 1$ acts trivially on $V$. By Proposition \ref{prom1}(2) $V$ is an irreducible $W_n (B)$-module. We have the following:
\begin{theorem}\label{t0.3}
Let $V$ be a uniformly bounded irreducible module for $W_n (B)$
with finite dimensional weight spaces. Then there exists a co-finite ideal $J$ of $B$ such that $W_n \otimes J.V=0.$

\end{theorem}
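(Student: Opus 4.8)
The plan is to identify the desired ideal as the annihilator
$J:=\{b\in B:\ (W_n\otimes b).V=0\}$ and to prove it is co-finite. First I would record that $J$ really is an ideal of $B$: since $W_n$ is perfect, any $x\in W_n$ can be written $x=\sum_k[y_k,z_k]$, so for $b\in J$ and $a\in B$ we have $x\otimes ab=\sum_k[y_k\otimes a,\ z_k\otimes b]$, and $\rho(z_k\otimes b)=0$ forces $\rho(x\otimes ab)=0$; hence $ab\in J$. Thus it suffices to prove $\dim_\C(B/J)<\infty$, and after replacing $B$ by $B/J$ we may assume $J=0$, i.e. no nonzero $b\in B$ acts as zero on $V$. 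The basic leverage is the following submodule observation, valid for any ideal $\mathfrak a\subseteq B$: the space $V^{\mathfrak a}:=\{v\in V:\ (W_n\otimes\mathfrak a).v=0\}$ is a submodule, because for $v\in V^{\mathfrak a}$, $a'\in\mathfrak a$, $c\in B$ and $y,z\in W_n$ one has $(y\otimes a')(z\otimes c)v=(z\otimes c)(y\otimes a')v+([y,z]\otimes a'c)v=0$, using that $\mathfrak a$ is an ideal. Hence $V^{\mathfrak a}\in\{0,V\}$ by irreducibility, and $V^{\mathfrak a}=V$ would force $\mathfrak a\subseteq J=0$; so $(W_n\otimes\mathfrak a).v\neq 0$ for every nonzero $v$ and every nonzero ideal $\mathfrak a$. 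This is the device that will convert vanishing on one weight space into vanishing on all of $V$.

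Next I would extract finiteness from uniform boundedness one direction at a time. Fix $\bb 0\neq\bb s\in\Z^n$ and a $\bm\gamma\in\C^n$ with $(\bm\gamma\,|\,\bb s)\neq 0$; then $\mathfrak v_{\bm\gamma,\bb s}:=\mathrm{span}\{D(\bm\gamma,j\bb s)\otimes b:\ j\in\Z,\ b\in B\}$ is a subalgebra of $W_n\otimes B$ isomorphic, after rescaling, to $W_1\otimes B$, and $V$ is a weight module for it all of whose weight spaces along the $\Z\bb s$-line have dimension $<N$. On a fixed line $\bigoplus_j V_{\bm\mu+j\bb s}$ the relation $[D(\bm\gamma,j\bb s)\otimes a,\ D(\bm\gamma,-j\bb s)\otimes b]=2j(\bm\gamma\,|\,\bb s)\,D(\bm\gamma,\bb 0)\otimes ab$ links the finitely many composition operators $V_{\bm\mu}\to V_{\bm\mu\mp j\bb s}\to V_{\bm\mu}$ to the action of $D(\bm\gamma,\bb 0)\otimes B$ on the finite-dimensional space $V_{\bm\mu}$. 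Running the least-exponent local-nilpotency argument used in the proof of Proposition \ref{prom1} and in the Claim in the proof of Theorem \ref{mthmub1} (both of which rest, via \cite[Theorem~3.3]{kgx}, on the classification of uniformly bounded $\mathrm{HVir}$-modules, Theorem \ref{thm1}), one shows that for $b$ in a co-finite subspace of $B$ there is a smallest $N_0$ with $\rho(D(\bm\gamma,\bb s)\otimes b)^{N_0}V=0$; a bracket of the form $D(\bm\delta,\bb k-\bb s)\otimes b'$ then descends the exponent and produces a nonzero vector annihilated by $D(\bm\gamma,\bb k)\otimes bb'$ for all $\bb k$ and $b'$, whence by the submodule observation a co-finite ideal $J_{\bm\gamma,\bb s}\subseteq B$ with $(D(\bm\gamma,j\bb s)\otimes J_{\bm\gamma,\bb s}).V=0$ for all $j$. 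Letting $\bm\gamma$ range over a basis of $\C^n$ (for the fixed $\bb s$) and intersecting, we obtain for each $i\in\{1,\dots,n\}$ a co-finite ideal $J_i$ with $\big((W_n)_{k\bb e_i}\otimes J_i\big).V=0$ for all $k\in\Z$, where $\bb e_i$ is the $i$-th standard basis vector.

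Finally I would assemble these. Put $J:=\bigcap_{i=1}^n J_i$, still co-finite. The kernel $\ker\rho$ is a Lie ideal of $W_n\otimes B$ and it contains every $(W_n)_{k\bb e_i}\otimes J$; since $W_n$ is generated as a Lie algebra by its homogeneous components $(W_n)_{k\bb e_i}$ ($i=1,\dots,n$, $k\in\Z$), iterated brackets of the $(W_n)_{k\bb e_i}\otimes J$ with elements $(W_n)_{\ast}\otimes 1$ — the factor $J$ being preserved because $J$ is an ideal and $1\in B$ — span all of $W_n\otimes J$; hence $W_n\otimes J\subseteq\ker\rho$, i.e. $(W_n\otimes J).V=0$, as required. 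The main obstacle is the middle paragraph: after restriction to a rank-one loop subalgebra the module is uniformly bounded but typically reducible, so one cannot simply invoke Savage's classification of irreducible uniformly bounded $\mathrm{Vir}\otimes B$-modules \cite{sav}; one must carry out the nilpotency/least-exponent estimate directly and then transfer a statement about a single weight space to annihilation of the whole of $V$ — and it is precisely this transfer that relies on irreducibility, on the submodule trick, and on the multiplicativity of the annihilator established in the first paragraph.
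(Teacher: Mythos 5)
Your outer framework is sound: the global annihilator $J$ is indeed an ideal, the set $V^{\mathfrak a}=\{v\in V:(W_n\otimes\mathfrak a).v=0\}$ is a submodule for any ideal $\mathfrak a$, and this submodule trick is exactly how the paper passes from annihilation on one weight space to annihilation of all of $V$ at the end of its proof. The gap is in your middle paragraph, which is where the entire finiteness content lives. You assert that, for $b$ in some co-finite subspace of $B$, the operator $D({\bm \gamma},{\bb s})\otimes b$ is nilpotent on $V$, by appeal to the arguments of Proposition \ref{prom1} and Theorem \ref{mthmub1}. Those arguments do not apply here: they establish local nilpotency only for the \emph{abelian} part ($S\otimes A_n\otimes B$, resp.\ $A_n\otimes B$), because after restricting to a line one lands in a twisted Heisenberg--Virasoro algebra and Theorem \ref{thm1} forces the Heisenberg generators to act on each irreducible subquotient by a scalar that vanishes when the degree-zero element acts by zero. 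For $W_n(B)$ there is no abelian part at all; the elements $D({\bm \gamma},j{\bb s})\otimes b$ span a copy of $W_1\otimes B$, not of $\mathrm{HVir}$, so no such scalar mechanism is available. Worse, the asserted nilpotency is simply false for the modules the theorem ultimately produces: on a single point evaluation module, $D({\bm \gamma},{\bb s})\otimes b$ acts as $\eta(b)D({\bm \gamma},{\bb s})$, which is not locally nilpotent whenever $\eta(b)\neq 0$. So the ``co-finite subspace'' on which you claim nilpotency would have to be contained in the annihilator ideal itself, i.e.\ you are assuming the conclusion at the crucial point; and your stated alternative (Savage's classification for $\mathrm{Vir}\otimes B$) is explicitly set aside without a replacement.

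The paper's proof gets co-finiteness from an entirely different and more elementary source, with no nilpotency anywhere: for ${\bb 0}\neq{\bb r}\in\Z^n$ it considers $I_{(1,{\bb r})}=\{b\in B: D({\bb e_1},{\bb r})\otimes b.V_{\bm\alpha}=0\}$, the kernel of the linear map $B\to \mathrm{Hom}(V_{\bm\alpha},V_{{\bm\alpha}+{\bb r}})$, so that $\dim(B/I_{(1,{\bb r})})\leq N^2$ automatically from uniform boundedness; bracketing with $D({\bb e_j},{\bb 0})\otimes s$ shows each $I_{(1,{\bb r})}$ is an ideal. The real work (Claims 1--5) is a chain-stabilization argument inside the finite-dimensional quotients $B/I_{(1,\cdot)}$ showing that a fixed finite product such as $I_{(1,{\bb e_1})}I^{N^2}_{(1,2{\bb e_1})}I^{N^2}_{(1,-3{\bb e_1})}\prod_{j\geq 2}I^{N^2}_{(1,{\bb e_j})}I^{N^2}_{(1,-{\bb e_j})}$ lies in $I_{(1,{\bb r})}$ for \emph{every} ${\bb r}\neq{\bb 0}$ simultaneously; squaring handles ${\bb r}={\bb 0}$, the product over $i=1,\dots,n$ gives $J$ with $W_n\otimes J.V_{\bm\alpha}=0$, and then the submodule trick finishes. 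If you want to repair your argument, replace the nilpotency step by this weight-space annihilator construction (or some other genuine source of a co-finite ideal acting trivially on one weight space); your first and last paragraphs can then stand essentially as written.
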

\begin{proof}
As $V$ is irreducible, we have 
 $V=\displaystyle{\bigoplus_{{\bb m} \in \mathbb{Z}^n}}V_{{\bm \alpha}+ {\bb m}},$ where ${\bm \alpha} \in \mathbb{C}^n$ with dim $V_{\bm \alpha} \neq 0$ and $V_{{\bm \alpha}+ {\bb m}} = \{ v \in V: D({\bb u}, {\bb 0})v=({\bb u}, {\bm \alpha}+ {\bb m})v $ for all ${\bb u} \in \mathbb{C}^n \}$.
Let $N \in \mathbb{N}$ such that $\mathrm{dim}\,(V_{\alpha+m}) \leq N$ for all $ {\bb m} \in \mathbb{Z}^n$.
 For ${\bb 0} \neq {\bb r} \in \mathbb{Z}^n$, let $I_{(1,{\bb r})} := \{ b \in B : D({\bb e_1, r}) \otimes b.V_\alpha = 0\}$. Then $I_{(1,{\bb r})}$ is the kernel of the linear map
    $$ B \to \mathrm{Hom} (V_{\bm \alpha}, V_{{\bm \alpha}+ {\bb r}}), 
 b \to (v \to D({\bb e_1, r})bv),$$
and hence we have $\mathrm{dim}(B/I_{(1,{\bb r})}) \leq N^2$. Note that  $[D({\bb e_j, 0}) \otimes s, D({\bb e_1, r}) \otimes b] = r_jD({\bb e_1, r}) \otimes bs$ implies that $I_{(1,{\bb r})}$ is an ideal of B for all $r \neq 0$.

\noindent
{\bf Claim 1 :} $I_{(1,{\bb e_1})}I^j_{(1,{\bb e_2})} \subseteq I_{(1, {\bb e_1} + j {\bb e_2})}$, for all $j \geq 1.$

\noindent
Note that $D({\bb e_1, e_1 + e_2}) \otimes bc = -[D({\bb e_1, e_1}) \otimes b, D({\bb e_1, e_2}) \otimes c].$ This implies that the claim is
true for $j = 1.$ Now it will follow by induction that the claim 1 is true for all $j \geq 1.$

\noindent
{\bf Claim 2 :} $ I_{(1,{\bb e_1})}{I^{N^2}_{(1,{\bb e_2}) }}
\subseteq I_{(1, {\bb e_1} + j {\bb e_2})}$ for all $j \geq 1$. 

\noindent
Fix some $j \in \mathbb N$ and consider the chain of subspaces
$$\frac{B}{I_{(1, {\bb e_1} + j {\bb e_2})}} \supseteq \frac{I_{(1,{\bb e_1})}+I_{(1, {\bb e_1} + j {\bb e_2})}}{I_{(1, {\bb e_1} + j {\bb e_2})}}  \supseteq \frac{I_{(1,{\bb e_1})}I_{(1,{\bb e_2})}+I_{(1, {\bb e_1} + j {\bb e_2})}}{I_{(1, {\bb e_1} + j {\bb e_2})}} \supseteq \frac{I_{(1,{\bb e_1})}I^2_{(1,{\bb e_2})}+I_{(1, {\bb e_1} + j {\bb e_2})}}{I_{(1, {\bb e_1} + j {\bb e_2})}} \supseteq  \cdots$$
Since $\mathrm{dim}( B/I_{(1, {\bb e_1} + j {\bb e_2})}) \leq N^2,$ there exists an $l \leq N^2$ such that $I_{(1,{\bb e_1})}I^l_{(1,{\bb e_2})} + I_{(1, {\bb e_1} + j {\bb e_2})} =
I_{(1,{\bb e_1})}I^{l+1}_{(1,{\bb e_2})} + I_{(1, {\bb e_1} + j {\bb e_2})}.$ This implies that, $I_{(1,{\bb e_1})}I^m_{(1,{\bb e_2})} + I_{(1, {\bb e_1} + j {\bb e_2})} =
I_{(1,{\bb e_1})}I^l_{(1,{\bb e_2})} + I_{(1, {\bb e_1} + j {\bb e_2})}$ for all $m \geq l$ and $l \leq N^2.$
Now $I_{(1,{\bb e_1})}I^j_{(1,{\bb e_2})} \subseteq I_{(1, {\bb e_1} + j {\bb e_2})} $ implies that $I_{(1,{\bb e_1})}\, I^l_{(1,{\bb e_2})} + I_{(1, {\bb e_1} + j {\bb e_2})} = I_{(1, {\bb e_1} + j {\bb e_2})}$ for $1 \leq j \leq l$. Hence we have $I_{(1,{\bb e_1})}I^ l_{(1,{\bb e_2})}+ I_{(1, {\bb e_1} + j {\bb e_2})} = I_{(1, {\bb e_1} + j {\bb e_2})}, \, \forall \, j \geq 1,$ this proves the Claim 2.

\noindent
{\bf Claim 3 :} $I_{(1,{\bb e_1})}I^{N^2}_{(1,{\bb e_2})}I^{N^2}_{(1,-{\bb e_2})} \subseteq I_{(1,{\bb e_1}+r {\bb e_2})}$ for all $r \in \mathbb Z .$
Note that $I_{(1,{\bb e_1})}I^{N^2}_{(1,{\bb e_2})}I_{(1,-{\bb e_2})} \subseteq I_{(1,{\bb e_1+je_2-e_2})} $ for all $j \geq 1,$ because
$$[D({\bb e_1 , e_1} + j\,{\bb e_2})\otimes ab, D({\bb e_1}, -{\bb e_2})\otimes c] = -D({\bb e_1, e_1} + j{ \bb e_2 - e_2})\otimes abc $$
for all $a \in I_{(1,{\bb e_1})}, b \in I^{N^2}_{(1,{\bb e_2})}, c \in I_{(1,-{\bb e_2})}.$ Hence by induction we have $I_{(1,{\bb e_1})}I^{N^2}_{(1,{\bb e_2})}I^k_{(1,-{\bb e_2}) }\subseteq I_{(1,{\bb e_1}+j\,{\bb e_2} -k \,{\bb e_2})}$ for all $j \geq 1, k \geq 1.$ Now with the same procedure as like claim 2 we
have $I_{(1,{\bb e_1})}I^{N^2}
_{(1,{\bb e_2})}I^{N^2}_{(1,-{\bb e_2})} \subseteq I_{(1, {\bb e_1}+j{\bb e_2}-k{\bb e_2})}$ for all $j,k \geq  1.$ Hence claim 3 follows.
Now it is easy to see with similar arguments that $I_{(1,{\bb e_1})}\displaystyle{\prod_{j=2}^{n}I^{N^2}_{(1,{\bb e_j})}I^{N^2}_{(1,-{\bb e_j})}} \subseteq I_{(1, {\bb e_1+r})}$ for all ${\bb r} \in \mathbb Z^{n-1}.$

\noindent
{\bf Claim 4:} $I_{(1,{\bb e_1})}I^{N^2}_{(1,2{\bb e_1})} \displaystyle{\prod_{j=2}^{n}I^{N^2}_{(1,{\bb e_j})}I^{N^2}_{(1,-{\bb e_j})}}\subseteq I_{(1,{\bb e_1}+2k{\bb e_1}+r {\bb e_2})}$ for all ${\bb r} \in \mathbb Z^{n-1}$, $k \geq 1.$
Note that $I_{(1,{\bb e_1})}I_{(1,2{\bb e_1})}\displaystyle{\prod_{j=2}^{n}I^{N^2}_{(1,{\bb e_j})}I^{N^2}_{(1,-{\bb e_j})}} \subseteq I_{(1, {\bb e_1}+2{\bb e_1}+r {\bb e_2})}$ for all ${\bb r} \in \mathbb Z^{n-1}$, because
$$ [D({\bb e_1, e_1 + r}) \otimes ab, D({\bb e_1}, 2{\bb e_1})\otimes c] = D({\bb e_1, e_1} + 2{\bb e_1 + r}) \otimes abc.$$
Hence by induction we have $I_{(1,{\bb e_1})}I^k_{(1,2{\bb e_1})}\displaystyle{\prod_{j=2}^{n}I^{N^2}_{(1,{\bb e_j})}I^{N^2}_{(1,-{\bb e_j})}} \subseteq I_{(1,{\bb e_1}+2k{\bb e_1+r})}$ for all $r \in \mathbb Z, k \geq 1.$ Now applying the method like claim 2 we have claim 4.

\noindent
{\bf Claim 5:} 
Set $ I_1 = I_{(1,{\bb e_1})}I^{N^2}
_{(1,2{\bb e_1})}I^{N^2}_{(1,-3e_1)}\displaystyle{\prod_{j=2}^{n}I^{N^2}_{(1,{\bb e_j})}I^{N^2}_{(1,-{\bb e_j})}} \subseteq I_{(1,{\bb e_1}+2k{\bb e_1}-3l{ \bb e_1+r})}$
for all ${\bb r} \in
\mathbb Z^{n-1}, k,l \geq 1.$ This claim is clear with the previous method.
Note that the set $\{1 + 2k - 3l : k, l \in \mathbb N \} = \mathbb Z.$ Hence $I_1 \subseteq I_{(1,{\bb r})},$ for all ${\bb r} \in 
\mathbb Z^n \setminus  \{0\}$. 

\noindent
{\bf Claim 6 :} $D({\bb e_1, r}) \otimes  J_1.V_\alpha = 0,$ for all ${\bb r} \in \mathbb Z^n$, $J_1=I_1^2$.
Note that $J_1 \subseteq I_1,$ so $D({\bb e_1, r}) \otimes  J_1.V_\alpha = 0,$ for all ${\bb r} \in \mathbb Z^n \setminus \{0\}.$ For ${\bb r = 0},$ just note
that $$[D({\bb e_1, e_1}) \otimes a, D({\bb e_1, -e_1})\otimes b] = 2D({\bb e_1, 0}) \otimes ab.$$ This proves the claim.
In a similar way we get co-finite ideals $J_i$ such that $D({\bb e_i, r}) \otimes  J_i.V_\alpha = 0$ for all ${\bb r} \in \mathbb Z^n, 1 \leq i \leq n.$ Then for $J := J_1J_2 \cdots J_n$ we have $W_n \otimes J.V_\alpha= 0$
and $W = \{v \in V : W_n \otimes J.v = 0\}$  is a non-zero submodule of $V$, so must be equal to $V$.
\end{proof}
To proceed further, we need to recall  the definitions of $A_nW_n$-module structure and an $A_n$-cover for any $W_n$-module $M$ (see \cite{ybvf} for more details).
\begin{definition}
A $W_n$-module $M$ is said to have an $A_nW_n$-module structure if it is also a module for an associative algebra $A_n$ and satisfies the following:
$$X(fv) = (Xf)v + f(Xv),  X \in W_n, f \in A_n, v \in M.$$
\end{definition}
In other words, $M$ is called an $A_n W_n$-module if $M$ is an $\LL$-module with an associative action of $A_n$, i.e., $t^{{\bb 0}}.v = v$ and $t^{{\bb m+n}}.v = t^{{\bb m}}t^{{\bb n}}.v$ for all $v \in M$.
It is easy to see that $W_n$ has an $A_n W_n$-module structure with an $A_n$ action given as follows:
$$t^{{\bb m}}. t^{{\bb s}} d_i = t^{{\bb m+s}} d_i.$$
\begin{definition}
An $A_n$-cover of a $W_n$-module $M$ is the subspace $\widehat{M} \subset \mathrm{Hom}(A_n, M)$ spanned by the set $\{\psi(X,v):X\in W_n, v \in M\}$, where $\psi(X,v) \in \mathrm{Hom}(A_n,M)$ is given by 
$\psi(X,v)(f)=(fX)v  $ for all $f \in A_n$.
\end{definition}
An $A_n$-cover of a $W_n$-module $M$ acquires an $A_n W_n$-module structure:
\begin{align*}
X.\psi(Y, v) &= \psi([X,Y], v) + \psi(Y, X.v)\\
g.\psi(Y, v) &= \psi(g Y, v), \,\, X,Y \in W_n, g \in A_n. v \in M,
\end{align*}
and it is useful in the following sense \cite{ybvf} Theorem 4.8:
\begin{theorem}
Let $V$ be an irreducible uniformly bounded module for $W_n$. Then its $A_n$ cover  $\widehat V$ is a uniformly bounded $\LL$-module.
\end{theorem}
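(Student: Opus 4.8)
The plan is to bound $\dim\widehat V_\lambda$ uniformly in $\lambda$, by a quantity depending only on $n$ and the uniform bound $N$ of $V$.

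\textbf{Setting up.} First I would verify that $\widehat V$ is genuinely a weight $\LL$-module. From the formulas for the $A_nW_n$-structure one computes, for $v\in V_\mu$, that $d_j.\psi(t^{\bb r}d_p,v)=(\mu_j+r_j)\psi(t^{\bb r}d_p,v)$, since $[d_j,t^{\bb r}d_p]=r_j t^{\bb r}d_p$ and $d_jv=\mu_jv$; hence $\psi(t^{\bb r}d_p,v)$ is a weight vector of weight $\mu+{\bb r}$, so $\widehat V=\bigoplus_\lambda\widehat V_\lambda$ and $\mathrm{Supp}(\widehat V)\subseteq\mathrm{Supp}(V)+\Z^n$, a single coset $\mu_0+\Z^n$ (as $V$ is an irreducible weight module over the $\Z^n$-graded algebra $W_n$). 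Because $t^{\bb 0}=1\in A_n$ and $1\cdot Y=Y$, the element $t^{\bb 0}$ acts as the identity, and the $A_n$-action is associative by construction; thus $\widehat V$ is an $\LL$-module. I would also record the evaluation maps $\varepsilon_{\bb m}\colon\widehat V\to V$, $\psi\mapsto\psi(t^{\bb m})$, for ${\bb m}\in\Z^n$: each is linear and sends $\widehat V_\lambda$ into $V_{\lambda+{\bb m}}$, $\varepsilon_{\bb 0}$ is even a surjective $W_n$-module homomorphism (its image is $W_n.V=V$, using $\varepsilon_{\bb 0}(\psi(X,v))=Xv$), and $\bigcap_{\bb m}\ker\varepsilon_{\bb m}=0$ since a linear map $A_n\to V$ vanishing on the basis $\{t^{\bb m}\}$ is zero.

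\textbf{The main estimate.} The crux is to produce a \emph{finite} set $F=F(n,N)\subseteq\Z^n$ such that, for every weight $\lambda$, the map $\widehat V_\lambda\to\bigoplus_{{\bb m}\in F}V_{\lambda+{\bb m}}$, $\psi\mapsto(\psi(t^{\bb m}))_{{\bb m}\in F}$, is injective; then $\dim\widehat V_\lambda\le |F|\,N$ and we are done. Now $\widehat V_\lambda$ is spanned by the vectors $\psi(t^{\bb r}d_p,v)$ with $1\le p\le n$ and $v\in V_{\lambda-{\bb r}}$, and $\psi(t^{\bb r}d_p,v)(t^{\bb m})=(t^{{\bb m}+{\bb r}}d_p)v$; so I must understand, inside the fixed finite-dimensional space $V_\lambda$ and its neighbours, the functions ${\bb m}\mapsto(t^{{\bb m}+{\bb r}}d_p)v$. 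For this I would fix a coordinate direction $p$ and, for each base weight $\nu$, restrict attention to the ``line'' $V^{[p,\nu]}:=\bigoplus_{k\in\Z}V_{\nu+k{\bb e_p}}$, which is a module over the copy $W_1^{(p)}:=\mathrm{span}\{t^{k{\bb e_p}}d_p:k\in\Z\}\cong W_1$ of the Witt algebra and is uniformly bounded with bound $N$. The key input is the \emph{polynomiality} of the action of the Witt algebra on uniformly bounded modules: there is $d=d(N)$ such that, read off in the cover, the maps $k\mapsto(t^{k{\bb e_p}}d_p)w$ satisfy a linear recursion of order $\le d+1$, i.e.\ behave like polynomials in $k$ of degree $\le d$ (visible for the intermediate series $V_{\alpha,\beta}$, where $x_k$ acts linearly in $k$, and propagating to arbitrary uniformly bounded modules). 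Iterating over the $n$ directions, each function ${\bb m}\mapsto(t^{{\bb m}+{\bb r}}d_p)v$ becomes, in each coordinate, polynomial of bounded degree with values in $V_\lambda$; hence $\widehat V_\lambda$ lies in the span of finitely many ``Taylor-coefficient'' functionals, and a Vandermonde/interpolation argument lets one take $F$ to be any sufficiently large box in $\Z^n$ whose size depends only on $n$ and $d$.

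\textbf{Expected obstacle.} The real work is this polynomiality input and the passage from one variable to $n$: one must show the degree bound for $W_1^{(p)}$ is uniform across all the lines $V^{[p,\nu]}$ simultaneously, control how the degrees in different directions combine, and ensure that $\widehat V_\lambda$ is genuinely exhausted — not merely approximated — by bounded-degree polynomial functions. An alternative organisation would use the identity $\psi(d_p,(t^{\bb a}d_q)v)=(t^{\bb a}d_q).\psi(d_p,v)+a_p\,t^{\bb a}.\psi(d_q,v)$ (a direct consequence of the $A_nW_n$-axioms and $[t^{\bb a}d_q,d_p]=-a_p t^{\bb a}d_q$) to try to prove that $\widehat V$ is generated, as a module over the associative algebra $A_n$, by the $(\le nN)$-dimensional subspace $\mathrm{span}\{\psi(d_p,v):v\in V_{\mu_0},\,1\le p\le n\}$; since a $\Z^n$-graded $A_n=\C[\Z^n]$-module generated by an $r$-dimensional homogeneous subspace has all graded pieces of dimension $\le r$, this would give uniform boundedness at once. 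The difficulty there is that the identity alone only reduces the claim to the $W_n$-stability of that subspace, which circles back on itself, so the uniform bound on $V$ must be injected somewhere — and the polynomiality of the Witt-algebra action above is, I expect, the right place to inject it.
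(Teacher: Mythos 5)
The paper itself offers no proof of this statement: it is quoted from Billig--Futorny \cite{ybvf} (their Theorem 4.8), so your attempt has to be measured against that source. Your set-up paragraph is fine (the $\Z^n$-grading of $\widehat V$, the evaluation maps $\varepsilon_{\bb m}$, associativity of the $A_n$-action, $t^{\bb 0}$ acting as identity), but the ``main estimate'' --- which is the entire content of the theorem --- rests on a polynomiality input that you neither prove nor state in a usable form. As written, the claim that ``the maps $k\mapsto (t^{k{\bb e_p}}d_p)w$ behave like polynomials in $k$ of degree $\le d$'' is not meaningful: the vectors $(t^{k{\bb e_p}}d_p)w$ lie in pairwise distinct weight spaces $V_{\mu+k{\bb e_p}}$, and a general uniformly bounded module carries no canonical identification of these spaces; your appeal to the intermediate series $V_{\alpha,\beta}$ uses a chosen basis, which is precisely what is unavailable in general. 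The correct statement --- and the actual engine of Billig--Futorny's argument --- concerns weight-preserving \emph{quadratic} expressions: there exists $m$ depending only on the uniform bound $N$ such that the difference operators $\sum_{i=0}^{m}(-1)^i\binom{m}{i}\,\bigl(t^{{\bb a}-i{\bb s}}d_u\bigr)\bigl(t^{{\bb b}+i{\bb s}}d_v\bigr)$ annihilate every cuspidal $W_n$-module. Proving this is the hard part (it goes through the theory of cuspidal modules over $sl_2$/solenoidal subalgebras); once it is available, one transfers these identities to relations among the spanning vectors $\psi(t^{\bb a}d_u,v)$ of $\widehat V_\lambda$ and obtains a linear recursion in ${\bb a}$ of bounded order, which bounds $\dim\widehat V_\lambda$ directly --- no separate uniformity-across-lines argument, multi-variable interpolation, or Vandermonde step is required.

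So the verdict is a genuine gap: you correctly sense where the uniform bound on $V$ must be injected and honestly flag it as the obstacle, but that obstacle \emph{is} the theorem. Without a correct statement and proof (or at least a citation) of the bounded-order difference identities for cuspidal $W_n$-modules, the argument does not close; and your fallback route --- generating $\widehat V$ over $A_n$ from $\mathrm{span}\{\psi(d_p,v)\}$ --- circles back, as you yourself note, because the $W_n$-stability of that subspace is essentially equivalent to what is being proved.
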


\begin{theorem} \label{mthmub2}
Any uniformly bounded irreducible $W_n\otimes B$-module is a single point evaluation module.
\end{theorem}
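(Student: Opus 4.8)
The plan is to use the $A_n$-cover technique to reduce to the situation of \cite[Theorem 4.5]{serpc}, where the action of the torus is associative and $t^{\bb 0}\otimes 1$ acts as the identity. If $V$ is a trivial module there is nothing to prove, so assume $V$ is non-trivial. By Theorem \ref{t0.3} there is a co-finite ideal $J$ of $B$ with $W_n\otimes J.V=0$, so $V$ is naturally a module for $W_n\otimes\bar B$, where $\bar B:=B/J$ is a finite dimensional commutative associative unital $\C$-algebra. A single point evaluation structure for the $W_n\otimes\bar B$-module $V$ pulls back, along $B\twoheadrightarrow\bar B$, to one for $W_n\otimes B$ (the composite homomorphism $B\to\bar B\to\C$ has maximal kernel containing $J$), so it suffices to prove the statement with $B$ replaced by the finite dimensional algebra $\bar B$.

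Next I would form the $A_n\otimes\bar B$-cover $\widehat V\subseteq\mathrm{Hom}(A_n\otimes\bar B,V)$ of the $W_n\otimes\bar B$-module $V$, mimicking the construction of the $A_n$-cover of a $W_n$-module recalled above: $\widehat V$ is the span of the maps $\psi(X,v)$, $X\in W_n\otimes\bar B$, $v\in V$, with $\psi(X,v)(f)=(fX).v$, where $A_n\otimes\bar B$ acts on $W_n\otimes\bar B$ via the natural $A_n$-module structure on $W_n=\mathrm{Der}(A_n)$. A direct check gives $\widehat V$ the structure of an $\LL(\bar B)=(W_n\ltimes A_n)\otimes\bar B$-module, with $Y.\psi(X,v)=\psi([Y,X],v)+\psi(X,Y.v)$ for $Y\in W_n\otimes\bar B$ and $g.\psi(X,v)=\psi(gX,v)$ for $g\in A_n\otimes\bar B$; on $\widehat V$ the action of $A_n\otimes\bar B$ is associative and $t^{\bb 0}\otimes 1$ acts as the identity. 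Evaluation at $t^{\bb 0}\otimes 1$ gives a $W_n\otimes\bar B$-module homomorphism $\pi\colon\widehat V\to V$, $\psi(X,v)\mapsto X.v$, which is surjective since $V$ is non-trivial and irreducible; thus $V$ is a $W_n\otimes\bar B$-quotient of $\widehat V$.

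The crucial point is that $\widehat V$ is a \emph{uniformly bounded} $\LL(\bar B)$-module. This is the analogue for $W_n\otimes\bar B$-modules of the $A_n$-cover theorem \cite[Theorem 4.8]{ybvf} recalled above, and I expect its proof to be the main obstacle: one must bound $\dim\widehat V_{\lambda}$ in terms of $\sup_{\mu}\dim V_{\mu}$, and the argument of \cite{ybvf} should carry over, with the dimension bounds multiplied by a factor of $\dim_{\C}\bar B$ since $\bar B$ is finite dimensional. (One may first decompose $\bar B$ into its local artinian factors and/or pass to a composition series of $V$ as a $W_n$-module; either way the bounded-cover estimate is the heart of the matter.)

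Granting this, the rest is formal. Being uniformly bounded, $\widehat V$ has finite length as an $\LL(\bar B)$-module; let $\widehat V=M_0\supseteq M_1\supseteq\cdots\supseteq M_\ell=0$ be a composition series of $\LL(\bar B)$-submodules. Each quotient $M_{i-1}/M_i$ is an irreducible uniformly bounded $\LL(\bar B)$-module on which $A_n\otimes\bar B$ acts associatively and $t^{\bb 0}\otimes 1$ acts as the identity, so by \cite[Theorem 4.5]{serpc} it is a single point evaluation module: there is an algebra homomorphism $\eta_i\colon\bar B\to\C$ with $(x\otimes r).w=\eta_i(r)(x\otimes 1).w$ for all $x\in\LL$, $r\in\bar B$. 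Pushing the filtration $(M_i)$ forward along the surjection $\pi$ and using irreducibility of $V$, we see that $V$ is a $W_n\otimes\bar B$-quotient of some $M_{i-1}/M_i$; hence the identity $(x\otimes r).v=\eta_i(r)(x\otimes 1).v$ holds on $V$ for all $x\in W_n$, i.e.\ $V$ is a single point evaluation $W_n\otimes\bar B$-module. Pulling back along $B\twoheadrightarrow\bar B$, as noted at the start, $V$ is a single point evaluation $W_n\otimes B$-module.
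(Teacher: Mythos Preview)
Your overall strategy---reduce to a finite dimensional quotient $\bar B=B/J$, produce a uniformly bounded $\LL(\bar B)$-module with associative $A_n\otimes\bar B$-action that surjects onto $V$, pass to a composition series, and invoke \cite[Theorem~4.5]{serpc}---is exactly the paper's. The difference lies in how the covering module is built, and the paper's construction neatly avoids the obstacle you flag.

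You form an $A_n\otimes\bar B$-cover of $V$ viewed as a $W_n\otimes\bar B$-module, and then must reprove the Billig--Futorny uniform-boundedness estimate in this extended setting; you correctly identify this as the non-trivial point and leave it as a plausible but unproved claim. The paper instead takes the ordinary $A_n$-cover $\widehat V$ of $V$ regarded merely as a $W_n$-module, so that \cite[Theorem~4.8]{ybvf} applies directly and gives uniform boundedness of $\widehat V$ for free. It then equips the tensor product $\widehat V\otimes R$ (with $R=\bar B$) with an $\LL(R)$-module structure by
\[
D(\mathbf u,\mathbf r)\otimes b.\bigl(\phi(x,m)\otimes b'\bigr)=\phi([D(\mathbf u,\mathbf r),x],m)\otimes bb'+\phi(x,D(\mathbf u,\mathbf r)\otimes b.m)\otimes b',
\]
\[
t^{\mathbf r}\otimes b.\bigl(\phi(x,m)\otimes b'\bigr)=\phi(t^{\mathbf r}x,m)\otimes bb',
\]
on which $A_n\otimes R$ acts associatively. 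Since $R$ is finite dimensional, $\widehat V\otimes R$ is uniformly bounded by the already-established bound on $\widehat V$. The evaluation map $\phi(x,m)\otimes b\mapsto (x\otimes b).m$ is the required $W_n(R)$-surjection onto $V$, and your composition-series argument then finishes the proof verbatim. In short, tensoring the ordinary cover with $R$ buys you the uniform bound without reopening the Billig--Futorny argument; your direct $A_n\otimes\bar B$-cover would work too, but at the cost of redoing that estimate.
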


\begin{proof}
 Let $V$ be an irreducible uniformly bounded $W_n (B)$-module. Then by Theorem \ref{t0.3}, $V$ becomes a uniformly bounded
irreducible module for $W_n (R)$, for some finite dimensional commutative associative unital algebra $R$. Now,
$V$ is a uniformly bounded $W_n$-module. Hence its $A_n$ cover $\widehat{V} :=  \mathrm{span}\{\phi(x,v):x \in W_n, v \in V ,\,\,\, \phi(x,v) \in \mathrm{Hom}(A_n,V)\}$ is a uniformly bounded $\LL$-module.
 Then, consider the vector space $\widehat V \otimes R$, to which an $\LL(R)$-module structure can be given as follows:
\begin{align*}
    D({\bb u,r})\otimes b.\phi(x,m) \otimes b' &= \phi([D({\bb u,r}),x],m) \otimes bb'+\phi(x,D({\bb u,r})\otimes b.m) \otimes b',\\
    t^{\bb r}\otimes b.\phi(x,m) \otimes b' &= \phi(t^{\bb r} x,m) \otimes bb',
\end{align*}
for all ${\bb u}\in \mathbb C^n,{\bb r} \in \mathbb Z^n, x \in W_n,m \in V, b,b' \in R.$  It is easy to see that this action
defines a module structure on $\widehat V \otimes R$. We also note the action of $A_n \otimes R$ is associative on $\widehat V \otimes R$.
Now observe that $\eta:\widehat V \otimes R \to V   $ defined by $\eta(\phi(x, m)b) = (x \otimes b).m$ is a 
surjective $W_n (R)$-module map. As $\widehat V$ is uniformly bounded and R is finite dimensional, the
module $\widehat V \otimes R$ is a uniformly bounded $\LL (R)$-module. Therefore there is a composition
series for the $\LL \otimes R$-module $\widehat V \otimes R.$  Consider the composition series 
$$ 0 = V_0 \subseteq V_1 \subseteq  V_2 \subseteq \cdots \subseteq V_s = \widehat V \otimes R, $$
where $V_i/V_{i-1}$ is an irreducible uniformly bounded $\LL \otimes R$-module for $1 \leq i \leq s$. Let $1 \leq k \leq s$
be the smallest positive integer such that $\eta(V_k) \neq 0$. Then $\eta : V_k/V_{k-1} \to V$ is
a surjective $W_n (R)$-module map. Therefore $V$ is isomorphic to an irreducible
quotient of $V_k/V_{k-1}$, hence a single point evaluation module by Theorem 4.5 of \cite{serpc}.
\end{proof}
Note that all the proofs of our results in this section go through if we assume $n= 1$.

\subsection{Final result: Uniformly bounded case}
We assimilate the information from Proposition \ref{prom1}, Theorem \ref{mthmub1}, and Theorem \ref{mthmub2}  in the following
\begin{thm} \label{mthmub}
Let $n \geq 1$, then any non-trivial irreducible uniformly bounded $\LL_{S,n}(B)$-module can be considered as an irreducible $\LL(B)$ or $W_n (B)$-module. Furthermore, as a $\LL(B)$ or $W_n (B)$-module V is a single point evaluation module.
\end{thm}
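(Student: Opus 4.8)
The plan is to assemble the three preceding results of this subsection. First I would observe that $(S \otimes \C t^{{\bb 0}}) \otimes 1$ is central in $\LL_{S,n}(B)$, so, $V$ being irreducible, each of its elements acts by a scalar, and exactly one of two alternatives occurs. If $(S \otimes \C t^{{\bb 0}}) \otimes 1$ acts non-trivially, then Proposition \ref{prom1}(1) produces a codimension-one subspace $\overset{\circ}{S} \subseteq S$ with $(\overset{\circ}{S} \otimes A_n) \otimes B . V = 0$, so $V$ becomes an irreducible uniformly bounded $\LL(B)$-module; moreover the one-dimensional complement of $\overset{\circ}{S}$ must carry the non-trivial central scalar, so under the identification $\LL = W_n \ltimes A_n$ the element $t^{{\bb 0}} \otimes 1$ acts on $V$ by some non-zero scalar $c$. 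If instead $(S \otimes \C t^{{\bb 0}}) \otimes 1$ acts trivially, then Proposition \ref{prom1}(2) directly makes $V$ an irreducible uniformly bounded $W_n(B)$-module.

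Next I would treat the two cases separately. In the $\LL(B)$ case, the map fixing $W_n$ pointwise and sending $f \mapsto c^{-1} f$ for $f \in A_n$ is a Lie-algebra automorphism of $\LL$ (the abelian ideal $A_n$ may be rescaled freely), hence, after tensoring with $\mathrm{id}_B$, a Lie-algebra automorphism of $\LL(B)$ preserving the Cartan subalgebra $H_1$; twisting the module structure of $V$ by it yields an isomorphic $\LL(B)$-module on which $t^{{\bb 0}} \otimes 1$ now acts as the identity. Theorem \ref{mthmub1} then applies and shows $V$ is a single point evaluation $\LL(B)$-module. In the $W_n(B)$ case, Theorem \ref{mthmub2} applies verbatim and shows $V$ is a single point evaluation $W_n(B)$-module. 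In either case the remark following Definition \ref{evd1} says that a single point evaluation module is in particular an irreducible module for the underlying Lie algebra ($\LL$ or $W_n$), which completes the statement.

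There is no genuinely new difficulty left at this point: the substantive content has already been carried out in Proposition \ref{prom1}, in Theorem \ref{mthmub1} (where the associativity of $A_n \otimes B$ is reduced to that of $A_n \otimes 1$ through an $\mathrm{HVir}$-argument together with Lemma \ref{lemm1}), and in Theorem \ref{mthmub2} (through the $A_n$-cover construction and the reduction to a finite-dimensional base ring in Theorem \ref{t0.3}). The only minor points to verify are that the rescaling automorphism preserves irreducibility, uniform boundedness and non-triviality of $V$ (immediate, since it is a bijection fixing $H_1$), and that the $W_n(B)$-module obtained in the second alternative is still non-trivial; the latter holds because if $W_n \otimes B$ acted by zero then, together with the vanishing of $S \otimes A_n \otimes B$ provided by Proposition \ref{prom1}(2), $V$ would be a trivial $\LL_{S,n}(B)$-module, contrary to hypothesis.
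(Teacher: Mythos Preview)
Your proposal is correct and follows precisely the paper's approach: the paper's own proof of this theorem is a single sentence pointing to Proposition \ref{prom1}, Theorem \ref{mthmub1}, and Theorem \ref{mthmub2}, and you have simply spelled out how those pieces fit together, including the rescaling automorphism (which the paper invokes in the paragraph preceding Theorem \ref{mthmub1}). There is nothing to correct or add.
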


\begin{remark}
For n = 1, Theorem \ref{mthmub} was proved in \cite[Theorem 5.3]{wrc} for more general Lie algebras $\LL_{S, 1}$, where $S$ is a Lie superalgebra with an even diagonalisable derivation.
In particular, Theorem \ref{mthmub} generalises the result of \cite{wrc} when $S$ is assumed to be a finite dimensional abelian Lie algebra.
\end{remark}
In the next section, we will deal with the irreducible $\LL_{S,n}(B)$-modules with finite but non-uniformly bounded weight spaces. We will see that the classification of those modules hinges on the classification result
of irreducible uniformly bounded $\LL_{S,n}$-modules. In particular, the following result will be used repeatedly in the next section:
\begin{thm}
Let $W$ be a non-zero $\LL_{S,n}$-module ($n \geq 1$) with uniformly bounded weight spaces. Then, $W$ contains a non-zero irreducible $\LL_{S,n}$-module. Furthermore,
\begin{enumerate}[label= \((\arabic*)\)]
\item If $ {\bb 0} \notin \mathrm{Supp}(W)$, then $W$ is an infinite dimensional $\LL_{S,n}$-module and contains a  non-trivial  irreducible $\LL_{S,n}$-submodule.
\item If $ {\bb 0} \in \mathrm{Supp}(W)$, then either $W$ is finite dimensional trivial $\LL_{S,n}$-module or $W$ is an infinite dimensional $\LL_{S,n}$-module and $W/W_1$ contains a 
  non-trivial irreducible $\LL_{S,n}$-module, where $W_1$ is the maximal finite dimensional trivial $\LL_{S,n}$-submodule of $W$.
\end{enumerate}
\end{thm}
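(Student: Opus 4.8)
The plan is to reduce the statement to two elementary facts about weight $\LL_{S,n}$-modules together with one structural input. Since every element of $\LL_{S,n}$ shifts weights by a vector in $\Z^n$, $W$ splits as a direct sum of submodules $W=\bigoplus W^{[\bm\alpha]}$ indexed by the cosets of $\Z^n$ in $\C^n$, with $\mathrm{Supp}(W^{[\bm\alpha]})\subseteq\bm\alpha+\Z^n$; a simple submodule of any nonzero $W^{[\bm\alpha]}$ is one for $W$, and the alternative ``$\bm 0\in\mathrm{Supp}$'' versus ``$\bm 0\notin\mathrm{Supp}$'' also splits along cosets, so I may work one coset at a time.

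\emph{First lemma: finite support forces triviality.} I would first show that a nonzero weight $\LL_{S,n}$-module $Q$ with finite support satisfies $\mathrm{Supp}(Q)=\{\bm 0\}$ and is trivial. Fix $\mathbf r_0\in\Z^n$ with all coordinates nonzero and $(\mathbf r_0|\mathbf k)\neq0$ for every nonzero $\mathbf k$ in the (finite) difference set of the $\Z^n$-parts of $\mathrm{Supp}(Q)$, so that $(\mathbf r_0|\cdot)$ separates the support. Choose $\lambda\in\mathrm{Supp}(Q)$ maximising this pairing and $0\neq v\in Q_\lambda$; for $N\gg0$ both weights $\lambda\pm N\mathbf r_0$ leave $\mathrm{Supp}(Q)$, so $D(\mathbf u,N\mathbf r_0)v=D(\mathbf v,-N\mathbf r_0)v=0$, whence $[D(\mathbf u,N\mathbf r_0),D(\mathbf v,-N\mathbf r_0)]v=0$, i.e. $D(\mathbf w,\bm 0)v=0$ with $\mathbf w=-N\big((\mathbf u|\mathbf r_0)\mathbf v+(\mathbf v|\mathbf r_0)\mathbf u\big)$; evaluating $(\mathbf w|\lambda)=0$ at $\mathbf u=\mathbf v=\mathbf e_i$ gives $\lambda_i=0$ for all $i$. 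The minimising weight is $\bm 0$ by the same argument applied to $-\mathbf r_0$, so the support reduces to $\{\bm 0\}$. On $Q=Q_{\bm 0}$ the operators $t^{\mathbf m}d_i$ and $s\otimes t^{\mathbf m}$ with $\mathbf m\neq\bm 0$, and all of $H$, act by $0$ for weight reasons, and then $s\otimes t^{\bm 0}=\frac{1}{(\mathbf u|\mathbf m)}[D(\mathbf u,-\mathbf m),s\otimes t^{\mathbf m}]$ forces $S\otimes t^{\bm 0}$ to act by $0$ as well; hence $Q$ is trivial. In particular every finite-dimensional weight $\LL_{S,n}$-module is trivial.

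\emph{Second input: a simple submodule exists (the main obstacle).} I would restrict a nonzero coset component $W^{[\bm\alpha]}$ to $W_n$ and invoke the $A_n$-cover machinery of \cite{ybvf}: its $A_n$-cover is a uniformly bounded $A_nW_n$-module with $t^{\bm 0}$ acting as the identity, hence of finite length (an iterated extension of tensor modules $T(U,\bm\gamma)$), and since $W^{[\bm\alpha]}$ is a quotient of its cover up to a finite-dimensional trivial piece, $W^{[\bm\alpha]}$ has finite length as a $W_n$-module and therefore as an $\LL_{S,n}$-module, so it contains a nonzero simple submodule; this gives the first assertion. This finite-length property is the one genuinely non-trivial ingredient — the main obstacle — and is exactly where the structure theory of uniformly bounded modules (the $A_n$-cover of \cite{ybvf} and the classifications recorded in Theorems \ref{prom 2}, \ref{prom 3} and \ref{mthmub}) must be used; once it is in hand the rest is bracket bookkeeping. (Morally one is using that, by \ref{mthmub}, every uniformly bounded simple $\LL_{S,n}$-module is a single point evaluation module with support cofinite in its coset or equal to $\{\bm 0\}$.)

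\emph{The two cases.} If $\bm 0\notin\mathrm{Supp}(W)$, then $W$ is infinite-dimensional, because a finite-dimensional weight module is trivial hence supported at $\bm 0$; and any simple submodule is non-trivial, since a nonzero trivial module is supported at $\bm 0\notin\mathrm{Supp}(W)$. This is (1). If $\bm 0\in\mathrm{Supp}(W)$, let $W_1$ be the sum of all trivial submodules; each lies in $W_{\bm 0}$, so $W_1\subseteq W_{\bm 0}$ is finite-dimensional and is the maximal trivial submodule. If $W=W_1$ then $W$ is finite-dimensional trivial. Otherwise $W$ is infinite-dimensional (else it is trivial, forcing $W=W_1$), and $W/W_1\neq0$ has a simple submodule $\overline M$ by the previous step. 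If $\overline M$ were trivial, a weight-zero lift $v\in W_{\bm 0}$ of a nonzero vector of $\overline M$ would satisfy $W_n.v\subseteq W_1$ and $(s\otimes t^{\mathbf m}).v\subseteq W_1$ for all $\mathbf m\neq\bm 0$; as $W_1\subseteq W_{\bm 0}$ this forces $W_n.v=0$ and $(s\otimes t^{\mathbf m}).v=0$ for $\mathbf m\neq\bm 0$, and then the commutator identity $s\otimes t^{\bm 0}=\frac{1}{(\mathbf u|\mathbf m)}[D(\mathbf u,-\mathbf m),s\otimes t^{\mathbf m}]$ gives $(s\otimes t^{\bm 0}).v=0$, so $\C v$ is a trivial submodule and $v\in W_1$, i.e. $\overline M=0$ — a contradiction. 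Hence $\overline M$ is non-trivial, which is (2).
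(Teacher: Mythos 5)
Your proposal is correct, and it reaches the conclusion by a route that differs from the paper's in how the dichotomy (1)--(2) is handled. The paper's own proof is essentially a citation: existence of an irreducible submodule is asserted from the fact that $W$ (in a fixed coset) has a finite composition series, and statements (1) and (2) are then read off from the classification results for irreducible uniformly bounded modules (Lemma \ref{prom 4}, Theorems \ref{prom 5}, \ref{prom 2}, \ref{prom 3} and Proposition \ref{prom1}), which pin down the supports of the possible simple constituents. You instead prove (1) and (2) by two elementary observations — a nonzero weight $\LL_{S,n}$-module with finite support is concentrated at weight $\bm 0$ and trivial (so finite-dimensional weight modules are trivial), and a weight-zero vector of $W$ mapping to an $\LL_{S,n}$-trivial vector of $W/W_1$ is itself trivial, via the identity $s\otimes t^{\bm 0}=\frac{1}{(\mathbf u\mid\mathbf m)}[D(\mathbf u,-\mathbf m),s\otimes t^{\mathbf m}]$ — which makes that part self-contained and independent of the classification of simple uniformly bounded modules; the coset decomposition you begin with also repairs a small imprecision in the paper, since finite length can only be expected one coset at a time. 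For the genuinely non-trivial input you rely, exactly as the paper implicitly does, on finite length of uniformly bounded modules in a single coset, obtained through the $A_n$-cover of \cite{ybvf}; note only that the cover theorem as quoted in the paper is stated for irreducible modules, so you need (and \cite{ybvf} does provide) the version for arbitrary cuspidal $W_n$-modules, together with the finite-length property of cuspidal $A_nW_n$-modules, and for $n=1$ one may instead quote the classical finite-length result for uniformly bounded Virasoro modules. With that citation made precise, your argument is complete and, if anything, more explicit than the proof given in the paper.
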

\begin{proof}
The existence of a non-zero irreducible $\LL_{S,n}$-module is guaranteed by the fact that $W$ has a composition series of finite length. The results (1) and (2) follow by applications of the Theorem \ref{prom 4}, Theorem \ref{prom 5}, Theorem \ref{prom1}, Theorem \ref{prom 2}, and Theorem \ref{prom 3} respectively.
\end{proof}

\section{Non-uniformly bounded $\mathfrak{L}_{S, n}(B)$-modules}
In this section we classify all the irreducible Harish-Chandra $\mathfrak{L}_{S, n}(B)$-modules with non-uniformly bounded weight spaces. We need to introduce the following 
\subsection{Highest weight modules} We start with recalling some result proved for $\mathrm{Vir}$ in \cite{sav}. Let $\mathrm{Vir}^- \oplus \mathrm{Vir}^0 \oplus \mathrm{Vir}^+$ be standard triangular decomposition of $\mathrm{Vir}$, where
$\mathrm{Vir}^-  = \mathrm{span}\{x_i \mid i \in \Z_{<0}\}$, $\mathrm{Vir}^0 = \mathrm{span} \{C, x_0\}$ and $\mathrm{Vir}^+  = \mathrm{span}\{x_i \mid i \in \N\}$.
Let 
$\mathrm{Vir}(B)^{-} \oplus \mathrm{Vir}(B)^{0} \oplus \mathrm{Vir}(B)^{+}$ be a triangular decomposition of $\mathrm{Vir}(B)$, where
$\mathrm{Vir}(B)^{-} = \mathrm{Vir}^{-} \otimes B$, $\mathrm{Vir}(B)^{0} = \mathrm{Vir}^{0} \otimes B$ and $\mathrm{Vir}(B)^{+} = \mathrm{Vir}^{+} \otimes B.$
\begin{definition}
A $\mathrm{Vir}(B)$-module $V$ is called a highest weight module if it is a weight module and there exists a non-zero weight vector $v$ such that $\mathrm{Vir}(B)^+ . v = 0$ and $U(\mathrm{Vir})(B) = V$.
\end{definition}
Let $\C_{\psi}$ be the one dimensional representation of $\mathrm{Vir}(B)^{0}$, where $\psi \in \mathrm{Hom}(\mathrm{Vir}(B)^{0}, \C)$. Make $\C_{\psi}$ as a $\mathrm{Vir}(B)^{0} \oplus \mathrm{Vir}(B)^{+}$ with a trivial action of
$\mathrm{Vir}(B)^{+} $on it. Then consider Verma module $$M(\psi) := \mathrm{Ind}_{\mathrm{Vir}(B)^{0} \oplus \mathrm{Vir}(B)^{+}}^{U(\mathrm{Vir}(B))} \C_{\psi}.$$
Let $V(\psi)$ be the unique irreducible quotient. It is proved in \cite{sav} (Proposition 5.1) that $V(\psi)$ is Harish-Chandra module iff there exist a cofinite ideal $I$ of $B$ such that $\mathrm{Vir} \otimes I. V(\psi) = 0$.
The following result also proved in \cite{sav} (Theorem 5.3)
\begin{thm}
Every irreducible Harish-Chandra $\mathrm{Vir}(B)$-module with non-uniformly bounded weight spaces is a highest weight module and is a tensor product of finitely many irreducible single point generalised evaluation highest weight modules.
\end{thm}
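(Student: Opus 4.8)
The plan is to separate the statement into (i) a Mathieu-type dichotomy forcing $V$ to be a highest weight module and (ii) the tensor decomposition of such a module. For (i), after possibly composing with the involution of $\mathrm{Vir}(B)$ given by $x_i\otimes b\mapsto -x_{-i}\otimes b$, $C\otimes b\mapsto -C\otimes b$ (which swaps $\mathrm{Vir}(B)^{+}$ and $\mathrm{Vir}(B)^{-}$), it is enough to show that the support of $V$ is bounded above in the $\Z$-grading given by $\operatorname{ad}(x_0\otimes 1)$. Indeed, once this is known, choose $\lambda\in\mathrm{Supp}(V)$ with $\lambda(x_0\otimes 1)$ maximal and $0\neq v\in V_\lambda$; since every element of $\mathrm{Vir}(B)^{+}$ strictly raises the $x_0\otimes 1$-value, $\mathrm{Vir}(B)^{+}.v=0$, and as $U(\mathrm{Vir}(B))v$ is a nonzero submodule it is all of $V$, so $V$ is a highest weight module and therefore $V\cong V(\psi)$ for a unique $\psi\in\mathrm{Hom}(\mathrm{Vir}(B)^{0},\C)$.

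Establishing the dichotomy is the main obstacle, and I would proceed as in the classical $\mathrm{Vir}$ and $\mathrm{Vir}\otimes\C[t,t^{-1}]$ cases. Write $V=\bigoplus_{k\in\Z}V_{(k)}$ for the eigenspace decomposition under $\operatorname{ad}(x_0\otimes 1)$; these spaces are finite-dimensional. If both $x_{1}\otimes 1$ and $x_{-1}\otimes 1$ act injectively on every $V_{(k)}$, then $\dim V_{(k)}$ is at once non-decreasing and non-increasing in $k$, hence constant, and $V$ is uniformly bounded, contrary to hypothesis. Therefore some $x_{\pm 1}\otimes 1$ has a nonzero kernel on some $V_{(k_0)}$, and the delicate part is to promote this single relation to one-sidedness of the support, i.e.\ to rule out that $V$ lies in a coherent family of uniformly bounded modules; I would do this by Mathieu's $\mathfrak{sl}_2$-technique applied to $\langle x_{-1}\otimes 1, x_0\otimes 1, x_1\otimes 1\rangle$ and propagated through the higher modes and the coefficient ring $B$. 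This is the technical heart of the argument.

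Granting this, $V\cong V(\psi)$, and the remainder is structural. Since $V$ is Harish-Chandra, Proposition 5.1 of \cite{sav} supplies a cofinite ideal $I\subseteq B$ with $\mathrm{Vir}\otimes I.V=0$, so the action factors through $\mathrm{Vir}\otimes\overline B$ with $\overline B:=B/I$ finite-dimensional. As $\overline B$ is a finite-dimensional commutative unital algebra, the Chinese Remainder Theorem gives $\overline B\cong\prod_{j=1}^{r}\overline B_j$ with each $\overline B_j$ local Artinian of residue field $\C$; hence $\mathrm{Vir}\otimes\overline B=\bigoplus_{j=1}^{r}\mathfrak g_j$ with $\mathfrak g_j:=\mathrm{Vir}\otimes\overline B_j$ Lie ideals satisfying $[\mathfrak g_i,\mathfrak g_j]=\mathrm{Vir}\otimes\overline B_i\overline B_j=0$ for $i\neq j$. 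The triangular decomposition of $\mathrm{Vir}(B)$ descends to $\bigoplus_j\mathfrak g_j$, and $\psi$ splits as $\psi=\sum_j\psi_j$ over $\bigoplus_j\mathfrak g_j^{0}$. The standard tensor factorization of highest weight modules over a direct sum of Lie algebras with compatible triangular decompositions (checked on Verma modules and passed to the irreducible quotient) then gives $V\cong\bigotimes_{j=1}^{r}V(\psi_j)$, where $V(\psi_j)$ is the irreducible highest weight $\mathfrak g_j$-module; each $V(\psi_j)$ is Harish-Chandra, since otherwise a weight space of the tensor product, all other tensor factors being nonzero, would be infinite-dimensional.

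It remains to recognise each $V(\psi_j)$ as a single point generalised evaluation highest weight module over $\mathrm{Vir}\otimes B$. Let $\mathfrak m_j\subseteq B$ be the preimage of the maximal ideal $\mathfrak m_{\overline B_j}$ of $\overline B_j$ under $B\twoheadrightarrow\overline B\twoheadrightarrow\overline B_j$; then $B/\mathfrak m_j\cong\C$. Because $\overline B_j$ is Artinian local, $\mathfrak m_{\overline B_j}^{\,k}=0$ for some $k\in\N$; the image of $\mathfrak m_j^{\,k}$ in $\overline B_j$ equals $\mathfrak m_{\overline B_j}^{\,k}=0$, so $\mathrm{Vir}\otimes\mathfrak m_j^{\,k}$ annihilates $V(\psi_j)$, i.e.\ the action factors through $\mathrm{Vir}\otimes B/\mathrm{Vir}\otimes\mathfrak m_j^{\,k}\cong\mathrm{Vir}\otimes B/\mathfrak m_j^{\,k}$, which is precisely the definition of a single point generalised evaluation module, and $V(\psi_j)$ is a highest weight module for the triangular decomposition inherited from $\mathrm{Vir}(B)$. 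Thus $V$ is a tensor product of finitely many irreducible single point generalised evaluation highest weight modules, which is the assertion; the only genuinely hard input is the dichotomy of the second paragraph.
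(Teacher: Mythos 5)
First, a point of reference: the paper does not prove this statement at all; it is quoted verbatim from Savage \cite{sav} (Theorem 5.3), so there is no internal argument to compare yours with. Judged as a proof, your proposal has a genuine gap exactly where you say the ``technical heart'' lies. The reduction in your first paragraph (Chevalley-type involution, maximal $x_0\otimes 1$-eigenvalue, irreducibility forcing $V=U(\mathrm{Vir}(B))v$) and the whole second half (cofinite annihilating ideal via \cite{sav} Proposition 5.1, CRT splitting of $B/I$ into local Artinian factors, tensor factorization of the irreducible highest weight module, nilpotency of the maximal ideal giving the generalised evaluation property) are fine, modulo small glosses such as the one-dimensionality of the top weight space, which needs finite-dimensionality plus the abelianness of $\mathrm{Vir}(B)^{0}$ and Schur/Lie. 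But the dichotomy itself --- that an irreducible Harish-Chandra $\mathrm{Vir}(B)$-module which is not uniformly bounded has support bounded on one side --- is not established: your injectivity observation for $x_{\pm 1}\otimes 1$ only shows that \emph{some} mode has a kernel on \emph{some} graded piece, and you then defer the passage from this single relation to one-sidedness of the support to ``Mathieu's $\mathfrak{sl}_2$-technique propagated through the higher modes and the coefficient ring $B$'' without any argument.

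That deferred step is precisely the substantive content of the theorem. Mathieu's classification applies to $\mathrm{Vir}$, not to $\mathrm{Vir}\otimes B$, and the known proofs (\cite{krg} for $B=\C[t,t^{-1}]$, \cite{sav} in general) have to do genuine work here: one must control the action of the infinitely many operators $x_{i}\otimes b$ for varying $b$, show that failure of uniform boundedness kills a whole ``quadrant'' of the algebra on some vector (an annihilation statement uniform in $B$), and only then extract a highest weight vector; a single vanishing $x_{\pm1}\otimes 1$-kernel does not rule out, for instance, modules with support unbounded in both directions whose weight multiplicities grow. Since you neither reproduce such an argument nor cite a specific result covering the map/loop setting (your only citation, \cite{sav} Proposition 5.1, is invoked \emph{after} the highest weight structure is assumed), the proposal as written proves the factorization of an irreducible highest weight Harish-Chandra module but not the statement that every non-uniformly bounded irreducible Harish-Chandra module is of this form. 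The honest fix is either to supply the support argument in full or to cite \cite{sav} (Theorem 5.3) for the dichotomy, as the paper itself does.
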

One  can define highest weight modules for $\LL_{S,1}$ as above. We have similar result holds for $\LL_{S,1}(B)$:
\begin{thm}
Every irreducible Harish-Chandra $\LL_{S,1}(B)$-module with non-uniformly bounded weight spaces is a highest weight module and is a tensor product of finitely many irreducible single point generalised evaluation highest weight modules.
\end{thm}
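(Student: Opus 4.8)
The plan is to deduce the statement from Savage's classification of Harish-Chandra $\mathrm{Vir}(B)$-modules \cite{sav} together with the structure theory of $\LL_{S,1}$-modules in \cite{wrc}; the only genuinely new features to accommodate are the abelian tail $S\otimes A_1$ and the coefficient algebra $B$. \emph{Step 1 (highest weight structure).} Fix the triangular decomposition $\LL_{S,1}(B)=\LL_{S,1}(B)^{-}\oplus\LL_{S,1}(B)^{0}\oplus\LL_{S,1}(B)^{+}$ with $\LL_{S,1}^{\pm}=\mathrm{span}\{t^{\pm m}d,\ s\otimes t^{\pm m}\mid m\in\N,\ s\in S\}$, $\LL_{S,1}^{0}=\C d\oplus(S\otimes\C t^{0})$ (which is abelian), and $\LL_{S,1}(B)^{\bullet}=\LL_{S,1}^{\bullet}\otimes B$. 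Grade $V$ by the $d\otimes 1$-eigenvalue, so $\mathrm{Supp}(V)\subseteq\lambda_0+\Z$ for some $\lambda_0$. Since $V$ is not uniformly bounded, $k\mapsto\dim V_{\lambda_0+k}$ is unbounded, and the Mathieu-type monotonicity/truncation argument of \cite{sav} --- with the extra generators $s\otimes t^{\pm m}\otimes b$ handled as in \cite{wrc} --- shows that, after possibly interchanging $\LL_{S,1}(B)^{+}$ and $\LL_{S,1}(B)^{-}$, $\mathrm{Supp}(V)$ is bounded above. Any nonzero $v$ of top degree is then annihilated by $\LL_{S,1}(B)^{+}$, and irreducibility gives $V=U(\LL_{S,1}(B)^{-})v$; hence $V$ is a highest weight module with some highest weight $\psi\in\Hom(\LL_{S,1}(B)^{0},\C)$, and since $\LL_{S,1}(B)^{0}$ is abelian its highest weight space is exactly $\C v$ (this is the point where $n=1$ behaves differently from $n\geq 2$).

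\emph{Step 2 (a cofinite ideal acts trivially).} Next I would establish the $\LL_{S,1}(B)$-analogue of \cite[Proposition 5.1]{sav}: if $V$ is Harish-Chandra, then $\LL_{S,1}\otimes I$ annihilates $V$ for some cofinite ideal $I\subseteq B$. Restricting to the $\mathfrak{sl}_2$-type subalgebra $\mathrm{span}\{t^{\pm 1}d,d\}\otimes B$ and to the Heisenberg-type subalgebras $\mathrm{span}\{t^{\pm m}d\otimes 1,\ s\otimes t^{\pm m}\otimes b\}$, the finiteness of the weight spaces of $V$ yields a degree-independent bound on the dimensions of the relevant maps $B\to\Hom(V_\lambda,V_\mu)$; feeding this into the commutator bookkeeping of \cite{sav} (in the $W_1$-direction) and \cite{wrc} (in the $S\otimes A_1$-direction) produces cofinite ideals of $B$ annihilating each member of the finite generating set $\{t^{\pm 1}d,\ d,\ s\otimes t^{0}\ (s\in S)\}$ of $\LL_{S,1}$, and intersecting and saturating these gives a single cofinite ideal $I$ with $\LL_{S,1}\otimes I.V=0$.

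\emph{Step 3 (tensor decomposition).} Now $V$ is an irreducible module over $\LL_{S,1}\otimes(B/I)$. Since $I$ is cofinite, the Chinese Remainder Theorem gives $B/I\cong\prod_{i=1}^{\ell}B/\mathfrak{m}_i^{\,k_i}$, where $\mathfrak{m}_1,\dots,\mathfrak{m}_\ell$ are the maximal ideals of $B$ containing $I$, and hence $\LL_{S,1}\otimes(B/I)\cong\bigoplus_{i=1}^{\ell}\LL_{S,1}\otimes(B/\mathfrak{m}_i^{\,k_i})$ as a direct sum of ideals. Because $V$ is Harish-Chandra it has countable dimension, so Dixmier's form of Schur's lemma applies and an irreducible module over such a finite direct sum of Lie algebras is an (outer) tensor product $V\cong V_1\otimes\cdots\otimes V_\ell$ with each $V_i$ an irreducible $\LL_{S,1}\otimes(B/\mathfrak{m}_i^{\,k_i})$-module, i.e. a single point generalised evaluation $\LL_{S,1}(B)$-module. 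Finally, the highest weight space $\C v$ of $V$ equals $\bigotimes_i\C v_i$ for suitable weight vectors $v_i\in V_i$; each $v_i$ is then killed by the positive part of its factor, so each $V_i$ is itself an irreducible highest weight module, which completes the decomposition.

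The \emph{main obstacle} is Step 2. Already for $\mathrm{Vir}(B)$ this rests on the intermediate-series classification and a delicate analysis of how weight-space dimensions propagate along the weight lattice; here one must run the $W_1$-direction argument of \cite{sav} and the $S\otimes A_1$-direction argument of \cite{wrc} simultaneously and check that the mixed brackets $[t^{m}d\otimes b,\ s\otimes t^{k}\otimes b']=k\,(s\otimes t^{m+k})\otimes bb'$ do not obstruct the extraction of the cofinite ideal --- this is the ``modest generalisation'' referred to in the introduction, Step 1 being a similar adaptation of the same two sources and Step 3 being purely formal.
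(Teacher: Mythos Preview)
Your proposal is correct and takes essentially the same approach as the paper, which simply records that the result ``follows from Lemma 2.7 of \cite{wrc} and by similar techniques that of proof of Theorem 5.3 in \cite{sav}.'' Your three steps (highest weight structure, cofinite annihilating ideal, Chinese Remainder tensor decomposition) are exactly the architecture of Savage's Theorem 5.3, with the $S\otimes A_1$-tail handled via \cite{wrc} as the paper indicates; the paper offers no further detail beyond pointing to these two references, so your sketch is in fact more explicit than the paper's own proof.
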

\begin{proof}
Follows from Lemma 2.7 of \cite{wrc} and by similar techniques that of proof of Theorem 5.3 in \cite{sav}.
\end{proof}
We will now assume $n \geq 2$.
Let  $\mathfrak{L}_{S,n} = \bigoplus_{{\bb m} \in \Z^n}{{(\mathfrak{L}_{S,n})}_{{\bb m}}}$ be the root space decomposition of $\mathfrak{L}_{S, n}$ with respect to $H$, where
\begin{equation}
 {(\mathfrak{L}_{S,n})}_{\bb m} =
    \begin{cases}
      H \ltimes (S \otimes  t^{{\bb 0}}) & \text{if ${\bb m = 0}$}\\
       \big(\bigoplus_{i =1}^n{\C t^{\bb m} d_i} \big) \ltimes (S \otimes  t^{\bb m})  & \text{otherwise}.
      
    \end{cases}        
\end{equation}
Let $M$ be a subgroup of $\Z^n$ and ${\bm \beta} \in \Z^n$ such that $\Z^n = M \oplus \Z {\bm \beta}$. Consider a triangular decomposition of ${ \mathfrak{L}^-_{S,n}} \oplus {(\mathfrak{L}_{S,n})}_{M} \oplus {\mathfrak{L}^+_{S,n}}$ of $\LL_{S,n}$, where $${\mathfrak{L}^-_{S,n}} = \bigoplus_{d \in \N}
 {(\mathfrak{L}_{S,n})}_{M - d{\bm \beta}}, {\mathfrak{L}^+_{S,n}} = \bigoplus_{d \in \N} {(\mathfrak{L}_{S,n})}_{M + d{\bm \beta}}, {(\mathfrak{L}_{S,n})}_{M} = \bigoplus_{{\bb r} \in M}{{(\mathfrak{L}_{S,n})}_{\bb r}}.$$
 \begin{definition} 
 An $\LL_{S,n}$-module $V$ is called a highest weight module if it is a weight module and there exists a non-zero weight vector $v$ with the property that ${\mathfrak{L}_{S,n}}^+ . v = 0$ and $U(\LL_{S,n}).v = V$.
 \end{definition}
 \noindent
 Let $Y$ be an irreducible module for ${(\mathfrak{L}_{S,n})}_{M}$.  Make $Y$ as ${(\mathfrak{L}_{S,n})}_{M}  \oplus {\mathfrak{L}^+_{S,n}}$-module with a trivial action of ${\mathfrak{L}_{S,n}}^{+}$ on it. The generalised Verma module for $\LL_{S,n}$ is defined as 
 $$V_{\LL_{S,n}}(Y, {{\bm \beta}}, M) := \mathrm{Ind}_{({\LL_{S,n})}_{M} \oplus {\LL^+_{S,n}}}^{U(\LL_{S,n})} Y.$$
 Now by the standard arguments $V_{\LL_{S,n}}(Y, {\bm \beta}, M)$ contains a unique irreducible sub-quotient $L_{\LL_{S,n}}(Y, {\bm \beta}, M)$. By the results of Zhao and Billig \cite{ybkz}, it follows that $L_{\LL_{S, n}}(Y, {\bm \beta}, M)$ has finite dimensional weight spaces if $Y$ is a uniformly bounded 
 exp-polynomial module for ${({\LL}_{S,n})}_M$ (see \cite{ybkz} for definition). We have the following result:
 \begin{thm} \label{hwbc}
 Let $V$ be an irreducible $\LL_{S,n}$-module with finite but not uniformly bounded weight spaces. Then $V \cong L_{\LL_{S,n}}(Y, {\bm \beta}, M)$ for some $Y, {\bm \beta}, M$ defined as above.
 \end{thm}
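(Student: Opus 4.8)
The plan is to reduce the statement to a classification of irreducible weight modules for a suitable subalgebra via a standard highest-weight machinery and then apply the finiteness criterion of Billig--Zhao. First I would fix the triangular decomposition $\mathfrak{L}^-_{S,n} \oplus (\mathfrak{L}_{S,n})_M \oplus \mathfrak{L}^+_{S,n}$ associated to a generic choice of ${\bm \beta}$. Since $V$ has finite but not uniformly bounded weight spaces, the dimensions $\dim V_{\bm\lambda}$ must grow without bound along some direction; the first task is to choose ${\bm \beta}$ (and hence $M$) so that the weight multiplicities are bounded along all of $M$ but unbounded in the ${\bm \beta}$-direction. This is the genuine \emph{choice} step: I expect it to follow by an argument in the spirit of \cite{vmkz} (or the Billig--Futorny shadow/generic-hyperplane argument), showing that for a Zariski-generic ${\bm \beta}$ the subspace $\bigoplus_{{\bb r}\in M} V_{\bm\lambda + {\bb r}}$ is uniformly bounded as a $(\mathfrak{L}_{S,n})_M$-module while $V$ itself is not.

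Next, with such a ${\bm \beta}$ fixed, I would show that $V$ is a highest-weight module for this decomposition. The standard strategy: consider the action of $\mathfrak{L}^+_{S,n}$ raising the ${\bm \beta}$-degree. Because multiplicities are bounded along $M$ but unbounded in the ${\bm \beta}$-direction, the ${\bm \beta}$-degrees appearing in $\mathrm{Supp}(V)$ must be bounded above (otherwise one produces infinitely many linearly independent vectors in a single weight space by the $\mathfrak{sl}_2$-type or Virasoro-type string arguments of \cite{vmkz}). Taking a weight vector $v$ of maximal ${\bm \beta}$-degree gives $\mathfrak{L}^+_{S,n}.v = 0$, and irreducibility forces $U(\mathfrak{L}_{S,n}).v = V$. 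Then $Y := U((\mathfrak{L}_{S,n})_M).v$ is an irreducible $(\mathfrak{L}_{S,n})_M$-module with uniformly bounded weight spaces, and by the universal property $V$ is a quotient of the generalized Verma module $V_{\mathfrak{L}_{S,n}}(Y, {\bm \beta}, M)$; by irreducibility $V \cong L_{\mathfrak{L}_{S,n}}(Y, {\bm \beta}, M)$.

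Finally I would need $Y$ to be of the exp-polynomial type required to invoke the Billig--Zhao result \cite{ybkz} quoted just above the statement. Here one uses that $(\mathfrak{L}_{S,n})_M$ contains a rank-$(n-1)$ copy of an extended Witt algebra $\mathfrak{L}_{S,n-1}$-type object (after identifying $M \cong \Z^{n-1}$), so an irreducible uniformly bounded module for it is, by Theorem~\ref{mthmub} applied in rank $n-1$, a single point evaluation module and in particular a tensor-field / exp-polynomial module; this supplies the exp-polynomial hypothesis and guarantees $L_{\mathfrak{L}_{S,n}}(Y,{\bm\beta},M)$ has finite-dimensional weight spaces, consistent with $V$.

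\textbf{Main obstacle.} The hard part will be the first step: choosing ${\bm \beta}$ so that weight multiplicities are genuinely bounded along the complementary lattice $M$ and simultaneously establishing that the $\mathfrak{L}^+_{S,n}$-action is locally nilpotent (equivalently, that ${\bm \beta}$-degrees are bounded above). Controlling multiplicity growth in mixed Witt/current directions and ruling out pathological "two-sided" growth is exactly the technical heart of \cite{vmkz}, and transplanting that machinery to $\mathfrak{L}_{S,n}(B)$ — keeping track of the extra $S\otimes A_n$ abelian current part — is where the real work lies; the passage from a maximal-degree vector to the generalized-Verma presentation is then formal.
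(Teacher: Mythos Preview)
Your overall strategy --- locate a generalized highest-weight vector via dimension-counting and the Mazorchuk--Zhao support analysis from \cite{vmkz}, then present $V$ as the unique irreducible quotient of a generalized Verma module --- is precisely the content of Theorem~4.3 in \cite{kgx}, which the paper invokes as its entire proof. So the core plan is correct and matches the paper.

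Two confusions have crept in, however. First, this theorem is about $\LL_{S,n}$, \emph{not} the loop algebra $\LL_{S,n}(B)$; there is no $B$ in sight, so your final paragraph about ``transplanting that machinery to $\LL_{S,n}(B)$'' and tracking the extra current directions is attacking a phantom obstacle. The genuine loop version is Theorem~\ref{thmnu}, which does require the more delicate argument you sketch and in fact \emph{uses} the present theorem as an input. For Theorem~\ref{hwbc} itself the only adaptation needed over \cite{kgx} is that the root spaces $(\LL_{S,n})_{\bb m}$ now have dimension $n + \dim S$ rather than $n+1$, which changes only the constant in the pigeonhole inequality.

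Second, your third paragraph --- verifying that $Y$ is exp-polynomial so that Billig--Zhao \cite{ybkz} guarantees finite-dimensional weight spaces for $L_{\LL_{S,n}}(Y,{\bm\beta},M)$ --- runs the logic backwards. By hypothesis $V$ \emph{already} has finite-dimensional weight spaces, and you are showing it is isomorphic to some $L_{\LL_{S,n}}(Y,{\bm\beta},M)$; once that isomorphism is established there is nothing further to verify. (The Billig--Zhao result is only quoted before the theorem as a sufficient condition for the converse direction, i.e.\ for constructing examples.) Invoking Theorem~\ref{mthmub} in rank $n-1$ here is likewise vacuous: with $B=\C$ every module is trivially a single-point evaluation module, so that theorem gives you nothing. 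Delete this paragraph and the argument is clean.
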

 \begin{proof} Proof follows by exactly similar argument as that of proof of Theorem 4.3 in \cite{kgx}.
 \end{proof}
 Similarly, let $\mathfrak{L}_{S,n}(B) = \bigoplus_{{\bb m} \in \Z^n}{{\mathfrak{L}_{S,n}(B)}_{\bb m}}$  be the root space decomposition of $\mathfrak{L}_{S,n}(B)$ with respect to $H$, where 
\begin{equation}
 {\mathfrak{L}_{S, n}(B)}_{\bb m} =
    \begin{cases}
      (H \ltimes (S \otimes t^{{\bb 0}}))\otimes B & \text{if ${\bb m = 0}$}\\
     \big( \big( \bigoplus_{i =1}^n{ \C t^{\bb m} d_i} \big) \ltimes S \otimes t^{\bb m} \big) \otimes B & \text{otherwise}.
      
    \end{cases}       
\end{equation}
As above we define the generalised Verma module $$V_{\LL_{S,n}(B)}(Y, {\bm \beta}, M) : =  \mathrm{Ind}_{{\LL_{S,n}(B)}_M \oplus {\LL_{S,n}(B)}^{+}}^{U({\LL_{S,n}(B)})} Y$$ and $L_{\LL_{S,n}(B)}(Y, {\bm \beta}, M)$ be its unique irreducible quotient. Again by \cite{ybkz}, it follows that 
$L_{\LL_{S,n}(B)}(Y, {\bm \beta}, M)$ has finite dimensional weight spaces if $Y$ is a uniformly bounded ${\LL_{S,n}(B)}_M$-module.

\noindent{{\bb{Automorphisms of $\LL_{S,n}$}:}}

\noindent Let $GL_n (\Z)$ be the group of invertible matrix with entries in $\Z$. Let $A = (a_{ij}) \in GL_n (\Z)$ with $\mathrm{det}(A) = \pm1$. Then $A$ defines an Lie algebra automorphism 
$\Phi_{A} : \LL_{S,n} \rightarrow \LL_{S,n}$ as
$$\Phi_{A} (t^{{\bb m}} d_j) = \sum_{p =1}^{n}{b_{jp} t^{{\bb m}A^{T}} d_p}, \,\,\, \Phi_{A}(s \otimes t^{{\bb m}}) = s \otimes t^{{\bb m} A^{T}}, $$
where $1 \leq j \leq n, {\bb m} \in \Z^n, s \in S$ and $A^{-1} = B = (b_{i j})$.  The term up to a change of variable will always mean twisting of an $\LL_{S,n}$ action on its module by a suitable automorphism $\Phi_{A}$.

We need to recall the definition of a generalised highest weight vector. Let ${\bm \beta}, {\bm \gamma} \in \Z^n$. We denote ${\bm \beta} \geq {\bm \gamma}$ if  $\beta_i \geq \gamma_i$ for all $1 \leq i \leq n$ and ${\bm \beta} > {\bm \gamma}$ if  $\beta_i > \gamma_i$ for all $1 \leq i \leq n$.
\begin{definition}
Let $M$ be any $\LL_{S,n}$-module. A vector $v \in M$ is called a generalised highest weight vector if there exists $N \in \N$ such that $(\LL_{S,n})_{\bm \alpha}.v = 0$ for all ${\bm \alpha} > (N, \ldots, N)$.
\end{definition}
\subsection{A key result}
Let ${\bm \lambda} \in \C^n$ and let $W$ be any weight module for $\LL_{S,n}(B)$ or $\LL_{S,n}$.  Let $K$ be any subgroup of $\Z^n$, then we define $W[{\bm \lambda} + K] := \bigoplus_{{\bb k} \in K}{W_{{\bm \lambda}+{\bb k}}}$. In particular we have
$\LL_{S,n}(B) [K] = \bigoplus_{{\bb k} \in K}{(\LL_{S,n} (B))_{{\bb k}}}.$
The following theorem plays a key role in the classification:	
\begin{theorem}\label{thmnu}
	Let $V$ be an irreducible Harish-Chandra $\LL_{S,n}(B)$-module with non-uniformly bounded weight spaces.  Then $V$ is an $\mathfrak{L}_{S,n} (B)$-highest weight module.
\end{theorem}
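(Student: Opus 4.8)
The plan is to reduce the statement to the existence of a generalized highest weight vector and then to read off the highest weight structure, running — in the presence of the coefficient algebra $B$ — the strategy already used for the $B$-free statement in Theorem \ref{hwbc} (i.e. in \cite{kgx}) and for $W_n$ in \cite{ybvf, vmkz}. \emph{Step 1 (reduction of the goal).} Fix ${\bm\mu}\in\mathrm{Supp}(V)$, so $\mathrm{Supp}(V)\subseteq{\bm\mu}+\Z^n$. It is enough to produce, after twisting $V$ by some $\Phi_A$ with $A\in GL_n(\Z)$, a nonzero weight vector $v$ and a splitting $\Z^n=M\oplus\Z{\bm\beta}$ with $(\LL_{S,n}(B))_{M+d{\bm\beta}}.v=0$ for all $d\geq 1$: irreducibility then gives $U(\LL_{S,n}(B)).v=V$, so $V$ is a highest weight module and a quotient of $V_{\LL_{S,n}(B)}(Y,{\bm\beta},M)$ with $Y$ the $(\LL_{S,n}(B))_M$-module generated by $v$, which is uniformly bounded since weight multiplicities of $V$ are finite; hence $V\cong L_{\LL_{S,n}(B)}(Y,{\bm\beta},M)$. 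Moreover, as in the proof of Theorem \ref{hwbc}, it suffices to exhibit a \emph{generalized} highest weight vector $v$, i.e. one with $(\LL_{S,n}(B))_{\bm\alpha}.v=0$ for all ${\bm\alpha}>(N,\dots,N)$: a further change of variables and the bracket relations of $W_n$ then spread the vanishing from the cone $\{{\bm\alpha}>(N,\dots,N)\}$ to a half-lattice $M+\N{\bm\beta}$.

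\emph{Step 2 (the slice dichotomy).} Suppose, toward a contradiction, that $V$ has no generalized highest weight vector, even after any $\Phi_A$. For a primitive ${\bb s}\in\Z^n$, a generic ${\bm\gamma}\in\C^n$ with $({\bm\gamma}\,|\,{\bb s})\neq 0$, and $b\in B$, the slice $V[{\bm\mu}+\Z{\bb s}]$ is a Harish-Chandra module over a loop Heisenberg--Virasoro algebra of the form $\overline{\mathrm{HVir}}({\bm\gamma},{\bb s})\otimes B\subseteq\LL_{S,n}(B)$, whose Harish-Chandra modules are governed by the rank-one theory recalled at the start of this section (Theorem \ref{thm1} together with the $\LL_{S,1}(B)$-results of \cite{wrc, sav}). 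First I would show that each such slice is either uniformly bounded along $\Z{\bb s}$ or truncated along $\Z{\bb s}$, and then that the hypothesis forces the uniformly bounded alternative for every ${\bb s}$ and every base point: a single truncation, transported by the commutators linking different $D$-directions of $W_n$, would manufacture a vector annihilated by all $(\LL_{S,n}(B))_{\bm\alpha}$ with ${\bm\alpha}$ in a suitable cone, i.e. the forbidden generalized highest weight vector.

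\emph{Step 3 (slicewise boundedness $\Rightarrow$ uniform boundedness).} It remains to deduce that $V$ is uniformly bounded, contradicting the hypothesis. Here I would follow the counting techniques of \cite{vmkz} (the analogue of the Billig--Futorny argument): each slice being uniformly bounded along its line is annihilated by a cofinite-codimension part of the coefficients, by the same reasoning as in Theorem \ref{t0.3} (the map $B\to\mathrm{Hom}(V_{\bm\nu},V_{\bm\nu+{\bb r}})$ has finite-codimension kernel, an ideal by the bracket relations); patching these together reduces $B$ to a finite-dimensional quotient $B/J$, and one then bounds $\dim V_{\bm\nu}$ independently of ${\bm\nu}$ by exploiting the $W_n$-bracket relations on $V$ regarded as a $W_n\otimes(B/J)$-module all of whose one-dimensional slices are uniformly bounded. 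This contradiction forces $V$ to carry a generalized highest weight vector, and by Step 1 it is an $\LL_{S,n}(B)$-highest weight module.

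The hard part will be Step 3: ``uniformly bounded along every line'' does not formally imply ``uniformly bounded,'' and making this implication work — even over $W_n$ it is the technical heart of \cite{ybvf, vmkz} — while simultaneously carrying along the coefficient algebra $B$ is the crux. The enabling device is the reduction to a finite-dimensional quotient $B/J$, which is licensed by the uniformly bounded theory developed in Section \ref{secubm} (Theorem \ref{t0.3} and Theorem \ref{mthmub}); the remaining steps are adaptations, with $B$ threaded through, of arguments already present in \cite{kgx, vmkz}.
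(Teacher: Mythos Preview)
Your outline diverges from the paper's proof at the crucial point and, as written, carries a genuine gap.

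The paper does \emph{not} attempt to find an $\LL_{S,n}(B)$-generalized highest weight vector directly, precisely because the root spaces $(\LL_{S,n}(B))_{\bm\alpha}=(\LL_{S,n})_{\bm\alpha}\otimes B$ are infinite-dimensional when $B$ is, so the usual dimension count
\[
\dim V_{{\bm\mu}-{\bf m}}>(n+\dim S)\Big(\dim V_{{\bm\mu}+{\bf e}_1}+\sum_{i=2}^{n}\dim V_{{\bm\mu}+{\bf e}_1+{\bf e}_i}\Big)
\]
only produces a vector $v$ annihilated by $(\LL_{S,n})_{{\bf q}_j}$, \emph{not} by $(\LL_{S,n})_{{\bf q}_j}\otimes B$. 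The paper therefore works entirely inside the $\LL_{S,n}$-submodule $W=U(\LL_{S,n}).v$, runs the \cite{vmkz} machinery for $\LL_{S,n}$ alone to locate $K$, ${\bm\beta}$, $j_0$ and an irreducible $\LL_{S,n}$-subquotient $M/M'\cong L_{\LL_{S,n}}(K,{\bm\beta},Z)$ containing a fixed vector $z$, and only \emph{then} brings $B$ into play. The key device you are missing is this last step: for each $f\in B$ separately, the paper exploits the non-uniform boundedness of $L_{\LL_{S,n}}(K,{\bm\beta},Z)$ to find (by another dimension count, now involving only $\dim(\LL_{S,n})_{\bm\alpha}=n+\dim S$) an auxiliary vector $w^{{\bf g}_1}_{f,p}$ with $((\LL_{S,n})_{p{\bm\beta}+{\bf g}'}\otimes f).w^{{\bf g}_1}_{f,p}=0$, and then propagates this vanishing back to the fixed vector $z$ using that $z$ lies in $U(\LL_{S,n}).w^{{\bf g}_1}_{f,p}$. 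Thus $z$ is annihilated by $(\LL_{S,n})_{(2p+j){\bm\beta}+K}\otimes f$ for every $f\in B$, which is the desired $\LL_{S,n}(B)$-highest weight property.

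Your alternative route --- the contradiction argument of Steps~2 and~3 --- does not close. In Step~2 the slice $V[{\bm\mu}+\Z{\bf s}]$ is not irreducible over the loop Heisenberg--Virasoro algebra, so the dichotomy ``uniformly bounded or truncated'' is unavailable; and even a truncation along a single direction does not yield a cone of vanishing for the full $(\LL_{S,n}(B))_{\bm\alpha}$ without the multi-direction input that comes from the dimension count. In Step~3, the reduction of $B$ to a finite-dimensional quotient via ideals $I_{(\cdot,{\bf r})}$ in the style of Theorem~\ref{t0.3} genuinely needs the uniform bound $N$ (so that each ideal has codimension $\le N^2$ and finitely many products suffice); with only line-by-line bounds these codimensions are uncontrolled and the intersection of the resulting ideals need not be cofinite. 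The circularity is real: you are trying to use the uniformly bounded theory of Section~\ref{secubm} on a module you have not yet shown to be uniformly bounded. The paper avoids this entirely by separating the $\LL_{S,n}$-structure from the $B$-coefficients and treating each $f\in B$ one at a time against a single pivot vector $z$.
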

\begin{proof}
It is enough to prove that there exists a non-zero weight vector $z$ with weight $\bm \lambda$ and an integer $p$ with the property that
$\LL_{S, n}(B)[{(p + j}) \bm \beta + K] .z = 0$ for all $j \geq 1$, where $\Z^n = \Z \bm \beta \oplus K$. Then by irreducibilty of $V$ and PBW theorem, it will imply that the $\mathrm{Supp}(V) \subseteq \{\bm \lambda + (p - j)\bm \beta + K: j \in \Z_{+} \}$. Hence we will have
$V = U(\LL_{S, n} (B)).Z$, where  $Z = U(\LL_{S, n} (B) [K]).z = V[{\bm \lambda + p \bm \beta + K}]$ is the highest weight space of $V$.

Now, since $V$ is irreducible, there exists ${\bm \mu} \in \C^n$ such that Supp $V \subseteq {\bm \mu} + \Z^n$. Choose a  ${\bm \gamma}\in \C^n$ with the property that $(\bm{{\bm \gamma}}|\bf{m}) \neq 0$ for all $\bf{m}\in \Z^n \setminus \{0\}$. Then consider $V$ as $\mathrm{Vir}(\bm{\bm \gamma})$-module and by exactly similar argument as that of Theorem 4.3 of \cite{kgx} we get $\bf{m}\in\Z^n$ such that $m_1=0$ and  
\begin{equation}
\mathrm{dim} V_{{\bm \mu}-\bf{m}}>(n+ \mathrm{dim}\, S)(\mathrm{dim}V_{{{\bm \mu}+\bf{e}}_1}+\sum_{i=2}^{n}\mathrm{dim}V_{{\bm \mu}+{\bf{e}}_1+{\bf{e}}_i})
\end{equation}	

Let ${\bf{q}}_1={\bf{m}}+{\bf{e}}_1$, ${\bf{q}}_i={\bf{m}}+{\bf{e}}_1+{\bf{e}}_i$ for $ i=2,\dots,n$. For simplification, let $\lambda:= {{\bm \mu}}-\bf{m}$. Then by (4.3) we have non-zero vector $v \in V_\lambda$ such that $D({\bf{e}}_i, {\bf{q}}_j). v= 0$ and $S\otimes t^{{\bf{q}}_j}.v=0$ for all $i,j=1,\dots,n$. It follows that $v$ is an $\mathfrak{L}_{S,n}$-generalised highest weight vector. Now consider the $\mathfrak{L}_{S,n}$-submodule $W:=U(\mathfrak{L}_{S,n}).v$ of $V$.  Let $\overset{\circ}{W}$ be the submodule of $W$ containing trivial vectors \footnote{We will prove at the end of this section (Proposition
\ref{propl}) that $\overset{\circ}{W} = \{{\bb 0}\}.$}, i.e., $\overset{\circ}{W} = \{v \in W \mid \LL_{S,n} . v = 0\}$. 
Consider the quotient module $\overline{W}:= W/ \overset{\circ}{W}$.

\noindent
{\bf{Claim 1}:} An $\LL_{S,n}$-module $\overline{W}$ contains no non-zero trivial vectors. 

\noindent
Suppose contrary, there exists a trivial vector $\bar{w} = w + \overset{\circ}{W}$, with $w \notin \overset{\circ}{W}$ such that $\LL_{S,n}w \in \overset{\circ}{W}$. But as $\bar{w}$ is a trivial vector, it must lie in the zero weight space of $\overline{W}$. It implies that the weight of $w$ is also a zero 
weight vector considered as a weight vector of $W$.
But we have $(\LL_{S,n})_{\pm {\bb e_i}}.w \in \overset{\circ}{W}$, and as all vectors $\overset{\circ}{W}$ have weight zero (considered as a weight vectors of W), it follows that $(\LL_{S,n})_{\pm {\bb e_i}}. w = 0 $. Hence we have $\LL_{S,n}.w = 0$, which is absurd. This proves claim 1. Let $\bar{v}$ denote the image of $v$ under the natural map from $W$ to $\overline{W}$. Then we have $\overline{W} = 
U( \LL_{S,n}). (\bar{v})$.

\noindent
{\bf{Claim 2:}}  $\overline{W}$ contains a highest weight $\LL_{S,n}$-module.

\noindent
By claim 1, $\overline{W}$ contains no non-zero trivial vectors. We have the following:
 \begin{enumerate}[label=(\alph*)]
         \item Every vector in $\overline{W}$ is a generalised highest weight vector.
	 \item For any $\bm{\alpha}\in \N^n$ and any $0\neq \bar{w} \in \overline{W}$, $ (\mathfrak{L}_{S,n})_{-\bm{\alpha}}.\bar{w} \neq 0$.
	
	\item  For $\eta\in$ Supp$(\overline{W})$, and any $\bm{\alpha}\in \N^n$, the set $\{\ell\in \Z| \eta+\ell\bm\alpha\in$ Supp$(\overline{W})\}=(-\infty, s]$ for some $s\in\Z_{+}$. 
	\item Upto a change of variables  we have $\lambda+\bm\alpha\notin$ Supp$(\overline{W})$ for any non-zero $\bm\alpha\in\Z_{+}^n$.
\end{enumerate}

Proof of above statements are exactly similar to Lemma 3.3, 3.4, 3.5, 3.6 of \cite{vmkz} respectively. By (d) it follows that $\overline{W}$ is not a uniformly bounded $\mathfrak{L}_{S,n}$-module. So let $\mathfrak{L}_{S,n}(K,\bm\beta,X)$ be its irreducible sub-quotient. Now as every element of $\overline{W}$ is a generalised highest weight vector, replacing $\bar{v}$ by a suitable vector if necessary we can assume that 
\begin{center}
	$\bm \lambda-\Z_{+}\bm\beta+K \setminus \{{\bb 0}\} \subseteq $ Supp$(\overline{W})$.
\end{center}  

We also have the following properties (see Lemma 3.7, \cite{vmkz})
\begin{enumerate}[label=(\roman*)]
	\item $\bm \lambda +\bm\alpha+K \nsubseteq$ Supp($\overline{W}$) for all ${\bm \alpha} \in \Z_{+}^n \setminus \{{\bb 0}\}$.
	\item  $\exists~ {\bm{\nu}}\in \N^n$ such that $K:= \{{\bf{y}}\in\Z^n : ({\bm{\nu}|\bf{y})}= 0\}$.
	\item $\bm \lambda+\bf{y}\in$ Supp($\overline{W}$) $\forall~ {\bf{y}}\in\Z^n$ with $(\bm\nu|\bf{y})<0$.
\end{enumerate}

\noindent
{\bf{Subclaim}:}
	$\{\bm \lambda +k\bm\beta+ K: k\in\N\}\bigcap$ Supp$(\overline{W})\neq \phi$ for only finitely many $k\in\N$.

\noindent
	Assume contrary, so we get a strictly increasing sequence of $k_i\in\N$ such that 
	\begin{center}
			$\{\bm \lambda +k_i\bm\beta+K: i\in\N\}\bigcap$ Supp$(\overline{W})\neq \phi$.
	\end{center}

Now by choosing sufficiently large $k_i\in \N$ and taking a subsequence of $\{k_i\in\N\}$ if necessary we can write
\begin{equation} \label{e1}
|\{\bm \lambda +k_r\bm\beta+K\}\bigcap \{\bm \lambda +\Z_{+}^n\}|>1
\end{equation}
\begin{equation}
|\{\bm \lambda +k_{r-1}\bm\beta+K\}\bigcap \{\bm \lambda +\Z_{+}^n\}|>0
\end{equation}

\noindent
By equation \ref{e1}, we have that $W[\bm \lambda +k_{r}\bm\beta+K]$ is a non-uniformly bounded $\mathfrak{L}_{S,n}[K]$-module. Now following the argument of the proof of Lemma 3.7 line by line we arrive at a contradiction. This completes the proof of subclaim.
Let $j_0\in\Z_{+}$ be the largest integer such that 
\begin{equation} \label{impeq}
	\{\bm \lambda +j_0\bm\beta+K \}\bigcap  \mathrm{Supp}(\overline{W})\neq \phi.
\end{equation}
Consider $\overline{W}[\bm \lambda +j_0\bm\beta+K]$ as $\mathfrak{L}_{S,n}[K]$-module. Observe that $\overline{W}[\bm \lambda +j_0\bm\beta+K]$ is a uniformly bounded $\mathfrak{L}_{S,n}[K]$-module. Otherwise it would contain an irreducible $\mathfrak{L}_{S,n}[K]\bigcap \mathrm{Vir}(\bm{\bm \gamma})$ sub-quotient say $\bar{Y}$ which is not uniformly bounded. But then the unique irreducible quotient of $$\mathrm{Ind}_{U((\mathfrak{L}_{S,n}[K] \cap \mathrm{Vir}({\bm \gamma}))+({\mathfrak{L}_{S,n}}^+ \bigcap \mathrm{Vir}(\bm{\bm \gamma})))}^{U(\mathrm{Vir}(\bm{\bm \gamma}))}\bar{Y}$$ is not a Harish-Chandra module which will imply that $\overline{W}$ is not a Harish-Chandra module.
\noindent
 Now as $\overline{W}[\bm \lambda +j_0\bm\beta+K]$ is uniformly bounded $\mathfrak{L}_{S,n}[K]$-module, let $\overline{Z}$ be its non-zero irreducible submodule. It is easy to see that the module $\bar{M} := U(\LL_{S,n}).\overline{Z}$ is the required highest weight $\mathfrak{L}_{S,n}$-module
 contained in $\overline{W}$. This completes the proof of claim 2. Now we need to deal with two cases:
 
 \noindent
 {\bf{Case 1:}} ${\bb 0} \notin \mathrm{Supp}({\overline{W}})$. Note that in this case all the zero weight vectors of $W$ lie in $\overset{\circ}{W}$. Let $\overline{Z}$ be its non-trivial irreducible submodule of $\overline{W}[\bm \lambda +j_0\bm\beta+K]$.
 Let $0 \neq \bar{z} \in \overline{Z}$  be any weight vector. Let $z$ be any preimage of $\bar{z}$ under the surjective $\LL_{S,n}[K]$-map $\Phi: W[{\bm \lambda}+j_0{\bm\beta}+K] \rightarrow \overline{W}[{\bm \lambda}+j_0{\bm\beta}+K]$. Let us define $M := U(\LL_{S,n}).z$. Let $M'$ be the maximal
 $\LL_{S,n}$-submodule of $M$ with property that $M_{{\bm \gamma}}' = 0$ for all $\bm \gamma \in \mathrm{Supp}(\overline{Z})$. The existence of $M'$ follows from the fact that $M$ is a cyclic module and equation \ref{impeq}. Observe also that $M \cap \overset{\circ}{W} \subseteq M'$.
 Let us consider the restriction of map $\Phi$ on an $\LL_{S,n}[K]$-module $M \{K\} := U(\LL_{S,n} [K]).z$, which we denote by $\psi$. Then $\psi: M \{K\} \mapsto \overline{Z}$ is a surjective $\LL_{S,n} [K]$-module map defined by $\psi(u.z) = u.\bar{z}$, where $u \in \LL_{S,n} [K]$. We have
 $\mathrm{Kernel}(\psi) = M\{K\} \cap \overset{\circ}{W}$.
 
  \noindent
 {\bf{Claim 3:}} $M/M'$ is an irreducible $\LL_{S,n}$-module.
 
 \noindent
  Assume on the contrary that there exists a proper non-zero $\LL_{S, n}$-submodule $M_1$ of $M$ containing $M'$ such that $(M_1)_{{\bm \gamma}} \neq 0$ where ${\bm \gamma} \in \mathrm{Supp} (\overline{Z})$.
 Let $0 \neq z_1 \in (M_1)_{{\bm \gamma}}$. Then there exists $u \in U(\LL_{S,n})$ such that $u.z = z_1$. 
 
\noindent 
 {\bf{Subclaim:}} $u \in U(\LL_{S,n}[K])$.
 
 \noindent
 Let us assume that the subclaim is not true. Then $u$ must be of the form $u_1 u_2 u_3$, where $u_1 \in U_0(\LL_{S,n}^-)$, $u_2 \in U(\LL_{S,n}[K])$ and $u_3 \in U_0(\LL_{S,n}^+)$. Here, for any Lie algebra $\mathfrak{g}$, $U_0(\mathfrak{g})$ denotes the argumentation ideal of
 $U(\mathfrak{g})$. But by equation \ref{impeq}, it follows that $u_3.z = 0$ unless weight of the vector $u_3.z$ is zero. But as all the zero weight vectors in M lies in $\overset{\circ}{W}$, it follows that $u_3 .z \in \overset{\circ}{W}$ and hence $z_1 = 0$ which is a required contradiction.
 This proves the subclaim. Now, as $\psi(z_1) = \bar{z_1}$ is a non-zero element of $\overline{Z}$ and $\overline{Z}$ is an irreducible $\LL_{S,n}[K]$-module, there exists an element $u_1 \in U(\LL_{S,n}[K])$ such that $u_1 . \bar{z_1} = \bar{z}$. But this implies that $z -  u_1.z_1 \in M \cap \overset{\circ}{W} 
 \subseteq M'$. So, we have $z + M' = u_1.z_1 + M'$, which is a contradiction to the assumption that $M_1/M'$ is a proper submodule of $M/M'$. This completes the proof of claim 3.
 
 By Theorem \ref{hwbc}, we have $M/M' \cong L_{\LL_{S,n}}(K, {\bm \beta}, Z)$,
 where $Z = U({\LL_{S, n}} [K]).\bar{z}'$, where $\bar{z}'$ is image of $z$ under the natural map from $M$ to $M/M'$.
 Let ${\bf{g}}_2\in K$ such that $V_{\bm \lambda +j_0 {\bm\beta}+{\bf{g}}_2}\neq 0$. As $L_{\mathfrak{L}_{S,n}}(K,{\bm \beta}, Z)$ is non-uniformly bounded there exists ${\bf{g}}_1\in K$ and $p\in\N$ such that 
\begin{equation}
\mathrm{dim}{(L_{\mathfrak{L}_{S,n}}(K,{\bm \beta}, Z))}_{\bm \lambda +(j_0-p)\bm\beta+{\bf{g}}_1} )> (\mathrm{dim}\,S+n)\mathrm{dim} V_{\bm \lambda +j_0\bm\beta+{\bf{g}}_2}
\end{equation}
Let $0\neq f\in B$. Let $ M''_{\bm \lambda +(j_0-p)\bm\beta+{\bf{g}}_1}$ be a vector space compliment of $M'_{\bm \lambda+(j_0-p)\bm\beta+{\bf{g}}_1}$ in $M_{\bm \lambda+(j_0-p)\bm\beta+{\bf{g}}_1}$. Then we have

\begin{center}
 $\mathrm{dim}M''_{\bm \lambda+(j_0-p)\bm\beta+{\bf{g}}_1}>(\mathrm{dim}\,\,S+n)\mathrm{dim}V_{\bm \lambda+j_0\bm\beta+{\bf{g}}_2}$.
\end{center}
By above inequality there exists $0\neq w_{f,p}^{{\bf{g}}_1}\in M_{\bm \lambda+(j_0-p)\bm\beta+{\bf{g}}_1}$ such that
\begin{center} ${((\mathfrak{L}_{S,n})_{p\bm\beta+{\bf{g}}'}\otimes f)w_{f,p}^{{\bf{g}}_1}=0}$, where $ {\bf{g}}' = {\bb g_2 - g_1}$.
	\end{center} 
Let us choose $N \in \N$ be large enough so that $\bm \lambda + (p + j) \bm \beta + K \neq \bb 0$ for all $j \geq N$.	
	Then we have
\begin{center}
	${(\mathfrak{L}_{S, n})_{(p+j)
		\bm\beta+{\bf{g}}}.w_{f,p}^{{\bf{g}}_1}=0}$ $\forall j\geq N$ and $\forall {\bf{g}}\in K$.
\end{center}
As $L_{\mathfrak{L}_{S,n}}(K,{\bm \beta}, Z)$ is irreducible, there exists $u_1,\dots,u_\ell$ and $h_1,\dots,h_\ell$ such that 
\begin{center}
	$h_1\cdots h_qu_1\cdots u_\ell.w_{f,p}^{{\bf{g}}_1} = \bar{z}',$
\end{center}
where
 $u_i\in(\mathfrak{L}_{S, n})_{(j_i)
	 {\bm\beta}+{\bf{g}}_i''}, 1\leq i \leq \ell, j_i>0, \sum_{i = 1}^{\ell}j_i = p, \,\,  \mathrm{and}\,\, {\bf{g}}_i''\in K
 \,\,  \mathrm{and}\,\, h_k\in(\mathfrak{L}_{S, n})_{{\bf{g}}_k'},$ $  {\bb g}_k'\in K,\,\, \forall \,\, 1\leq k\leq q.$
 But as $M_{\bm \gamma}' = \{0\}$ for all ${\bm \gamma} \in \mathrm{Supp} (\overline{Z})$, we have $h_1\cdots h_q u_1\cdots u_\ell.w_{f,p}^{{\bf{g}}_1} = z.$
Now, using 
	 ${((\mathfrak{L}_{S, n})_{p\bm\beta+{\bf{g}}'}\otimes f)w_{f,p}^{{\bf{g}}_1}=0}	={(\mathfrak{L}_{S, n})_{(p+j)\bm\beta+{\bf{g}}}.w_{f,p}^{{\bf{g}}_1}},$ for all $j \geq N$,
 we have
 \begin{equation}
 {((\mathfrak{L}_{S, n})_{(2p+j)\bm\beta+{\bf{g}}}\otimes f)w_{f,p}^{{\bf{g}}_1}} = [{(\mathfrak{L}_{S, n})_{(p+j)\bm\beta+{\bf{g-g}}_1}, {((\mathfrak{L}_{S, n})_{p\bm\beta+{\bf{g}}_1}\otimes f)}].w_{f,p}^{{\bf{g}}_1}}=0 \, \forall {\bb g} \in K.
 \end{equation}
 But then for all $j\geq N$,	 
 \begin{center}
 	$((\mathfrak{L}_{S, n})_{(2p+j)\bm\beta+{\bf{g}}}\otimes f)z=((\mathfrak{L}_{S, n})_{(2p+j)\bm\beta+{\bf{g}}}\otimes f)h_1\cdots h_qu_1\cdots u_\ell.w_{f,p}^{{\bf{g}}_1}=0$ $\forall {\bb g}\in K $
  \end{center}
 by an induction on $q + \ell$ and using $((\mathfrak{L}_{S, n})_{(2p+j)\bm\beta+{\bf{g}}}\otimes f).w_{f,p}^{{\bf{g}}_1}=0$ $\forall {\bb g}\in K, j\geq N.$
 
 \noindent
 {\bf { Case 2 :}} ${\bb 0} \in \mathrm{Supp}({\overline{W}})$. In this case we have two subcases: 
 
 \noindent
 {\bf {Subcase (i):}} $\{\bm \lambda +j_0\bm\beta+K\} \cap \mathrm{Supp} (\overline{W}) \neq \{ \bb 0\}$.
 Let $ \overset{\circ}{\overline{W}}[ \bm \lambda + j_0 \bm \beta + K]$ be a maximal trivial $\LL_{S,n}[K]$-module of
 $\overline{W}[\bm \lambda + j_0 \bm \beta + K]$. It follows that $$ \overset{\circ}{\overline{W}}[ \bm \lambda + j_0 \bm \beta + K] = \overset{\circ}{{W}}[K] \cap W[\bm \lambda + j_0 \bm \beta + K]/ \overset{\circ}{W} \cap W[\bm \lambda + j_0 \bm \beta + K],$$
 where $\overset{\circ}{W}[K] : = \{w \in W :  \LL_{S,n}[K].w = 0\}$. Now, $\overline{W}[\bm \lambda + j_0 \bm \beta + K]/ \overset{\circ}{\overline{W}}[\bm \lambda + j_0 \bm \beta + K]$ contains a  non-trivial irreducible $\LL_{S,n}[K]$-submodule, which we denote by $\overline{D}$.
 We have $$W[\bm \lambda + j_0 \bm \beta + K]/ \overset{\circ}{W}[K] \cap W [\bm \lambda + j_0 \bm \beta + K] \cong \overline{W}[\bm \lambda + j_0 \bm \beta + K]/ \overset{\circ}{\overline{W}}[\bm \lambda + j_0 \bm \beta + K] $$ as $\LL_{S,n}[K]$-modules.
 Consider a natural surjective $\LL_{S,n}[K]$-module map $$\Phi: W[\bm \lambda + j_0 \bm \beta + K] \rightarrow W[\bm \lambda + j_0 \bm \beta + K]/\overset{\circ}{W}[K] \cap W [\bm \lambda + j_0 \bm \beta + K]. $$ Let $0 \neq \bar{s} \in \overline{D}$ be a non-zero vector of non-zero weight. Let $s$ be any preimage of
 $\bar{s}$ under $\Phi$. Let $T = U(\LL_{S, n}).s$ and $T'$ be its maximal $\LL_{S,n}$-module with the property that $T'_{\bm \eta}  = 0$ for all $\bm \eta \in \mathrm{Supp}(\overline{D}) \setminus \{0\}$. It is easy to see that $T \cap \overset{\circ}{W}[K] \subseteq T'$. Let us define an $\LL_{S,n}[K]$-module $T\{K\}: = U(\LL_{S,n}[K]).s$ and an $\LL_{S,n}[K]$-module surjective map $\psi: T\{K\} \mapsto \overline{D}$ by $\psi(u.s) = u.\bar{s}$, where $u \in U(\LL_{S,n}[K]).$ Note that $\psi$ is a restriction of the map $\Phi$ on $T\{K\}$ and we have 
 $T \{K\}/ T\{K\}\cap \overset{\circ}{W}[K]  \cong \overline{D}.$
 
 \noindent
 {\bf{Claim 4:}} $T/T'$ is an irreducible $\LL_{S,n}$-module.
 
 \noindent Let us assume that the claim is false. So there exist a proper $\LL_{S,n}$-submodule $T_1$ containing $T'$ such that $(T_1)_{\bm \gamma} \neq 0$ for some $\bm \gamma \in \mathrm{Supp}(\overline{D}) \setminus \{\bb 0\}$. Let $0 \neq s_1 \in (T_1)_{\bm \gamma}.$ Then there
 exists $u \in U(\LL_{S,n})$ such that $u.s = s_1$. We see here that $u$ must lie in $U(\LL_{S,n}[K])$; otherwise $u = u_1 u_2 u_3$ where $u_1 \in U_0(\LL_{S,n}^-)$, $u_2 \in U(\LL_{S,n}[K])$ and $u_3 \in U_0(\LL_{S,n}^+)$. But we notice that
 $u_3. s = 0$ unless weight of $u_3. s$ is zero. But this possibility is ruled out by the choice of $j_0$ in the equation \ref{impeq}. Hence we have $u \in U(\LL_{S,n}[K])$. Now, consider the image $\psi(s_1) = \bar{s_1}$, which is non-zero as $s_1 \notin T'$. But then irreducibility of
 $\overline{D}$ implies that there exists an $u_1 \in U(\LL_{S,n}[K])$ such that $u_1.(\bar{s_1}) = \bar{s}$. But this forces that $s + T' = u_1.s_1 + T'$, which is a contradiction to the assumption that $T_1/T'$ is a proper submodule of $T/T'$ and we are done with the proof of claim 4.
 Now, we have $T/T' \cong L_{\LL_{S,n}}(K,  {\bm \beta}, D)$, where $D = U(\LL_{S,n}[K]) \bar{s}'$, where $ \bar{s}'$ is the image of $s$ in $T/T'$. Now, rest of the proof follows exactly along the similar lines as that of proof of case 1.
 
 \noindent
 {\bf {Subcase (ii):}} $\{\bm \lambda +j_0\bm\beta+K\} \bigcap \mathrm{Supp} (\overline{W}) = \{ \bb 0\}$. In this subcase notice that $\overline{W}[\bm \lambda +j_0 \bm \beta+K] = \overset{\circ}{W}[K]/ \overset{\circ}{W} = \overline{W}_{\bb 0}$ ,where
 $\overset{\circ}{W}[K] : = \{w \in W : \LL_{S,n}[K].w = 0\}$  Now,  let $0 \neq \bar{w} = w + \overset{\circ}{W}$. Let $j_1$ be the largest integer with the property that $j_1 < j_0$ and $\{\bm \lambda +j_1\bm\beta+K\} \bigcap \mathrm{Supp} (\overline{W}) \neq \phi$.
 But using the property of the Lie algebra $\LL_{S,n}$ that $[(\LL_{S,n})_{\bb m}, (\LL_{S,n})_{\bb s}] = (\LL_{S,n})_{\bb m + s}$ for ${\bb m, s} \in \Z^n$ and ${\bb m} \neq {\bb s}$, it follows that $j_1 = j_0 -1$. For if $\{ \bm \lambda + (j_0 - 1) \bm \beta+ K\} \bigcap \mathrm{Supp} (\overline{W}) = \phi$,
 then it would imply $\LL_{S,n}[- \bm \beta + K].w = 0$. But then using the before mentioned bracket property of $\LL_{S,n}$, it will follow that $\LL_{S,n}^- .w = 0$. As we already have $\LL_{S,n}^+ .w = 0 = \LL_{S,n}[K].w$, it follows that $\LL_{S,n}.w = 0$, which is a contradiction to the assumption
 that $w \notin \overset{\circ}{W}$. Let $x \in (\LL_{S,n})_{-\bm \beta + k}$ such that $x.w \neq 0$, for some $k \in K$. Now using the facts that $\LL_{S,n}^+ .w = 0 = \LL_{S,n}[K].w$, a quick calculation shows that $\LL_{S,n}^+ . (x.w) = 0$. Let $z_1 := x.w$ and consider the $\LL_{S,n}[K]$-module
 $Z_1 = U(\LL_{S,n}[K]).z_1$. An easy check shows that $\LL^+_{S,n}. y = 0$  for all $y \in Z_1$. Observe that $Z_1$ must be a uniformly bounded $\LL_{S,n}[K]$-module. Otherwise, considering it as $\mathrm{Vir}(\bm \gamma) \cap \LL_{S,n}[K]$-module and taking its irreducible subquotient $\overline{Z_1}$, which has finite dimensional but non-uniformly bounded weight spaces, we get a unique irreducible  quotient of $\mathrm{Vir}(\bm \gamma)$-module 
 $$\mathrm{Ind}_{U((\mathfrak{L}_{S,n}[K] \cap \mathrm{Vir}({\bm \gamma}))+({\mathfrak{L}_{S,n}}^+ \bigcap \mathrm{Vir}(\bm{\bm \gamma})))}^{U(\mathrm{Vir}(\bm{\bm \gamma}))}\bar{Z_1},$$
 which does not have finite dimensional weight spaces. This will eventually imply that $W$ does not have finite dimensional weight spaces.  Now, as $Z_1$ is uniformly bounded and ${\bb 0} \notin \mathrm{Supp}(Z_1)$, it contains a non-trivial irreducible $U(\LL_{S,n}[K])$-submodule
 say $Z$. Then $M : = U(\LL_{S,n}).Z$ is a highest weight $\LL_{S,n}$-module contained in $W$. Let $M/M'$ be its unique non-trivial irreducible quotient. Now, we can proof that any $0 \neq z \in Z$ is an  $\LL_{S,n}(B)$ highest weight vector by using the same argument as in case 1.   
 \end{proof}
 
 Let $L_{\LL_{S,n}(B)}(K,  {\bm \beta}, Z)$ be an  irreducible highest weight  $\LL_{S,n}(B)$-module with finite dimensional weight spaces. The highest weight space $Z$ is an irreducible $(\LL_{S,n}(B))_K$-module with uniformly bounded weight spaces. We aim to show that 
 $L_{\LL_{S,n}(B)}(K,  {\bm \beta}, Z)$ is a single point evaluation $\LL_{S,n}(B)$-module.  We start with the following:
 \begin{prop} \label{propimp} The $(\LL_{S,n}(B))_K$-module $Z$ is a single point evaluation module.
 \end{prop}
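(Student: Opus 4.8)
The plan is to exploit the structure of the highest weight space $Z$ as an irreducible uniformly bounded module over the Lie algebra $(\LL_{S,n}(B))_K$. The key observation is that $K$ is a full-rank subgroup of $\Z^n$ of rank $n-1$, so $(\LL_{S,n}(B))_K$ is essentially a copy of an extended Witt-type algebra in one fewer variable, tensored with $B$. More precisely, after a change of variables we may assume $K = \{\bb y \in \Z^n : y_n = 0\} = \Z^{n-1}$, and then $(\LL_{S,n}(B))_K$ contains a copy of $\LL_{S', n-1}(B')$ for a suitable finite-dimensional abelian $S'$ (absorbing $\C d_n$, $S$, and the degree-$(0,\dots,0)$ generators) and a suitable finitely generated commutative algebra $B'$; this is precisely the technical reason, flagged in the paper's introduction, for working with $\LL_{S,n}$ rather than $\LL_n$. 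Since $L_{\LL_{S,n}(B)}(K, {\bm \beta}, Z)$ has finite-dimensional weight spaces, $Z$ is uniformly bounded, hence an irreducible uniformly bounded module for this lower-rank loop extended Witt algebra.

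The main step is then to invoke Theorem \ref{mthmub}: every nontrivial irreducible uniformly bounded $\LL_{S', n-1}(B')$-module is a single point evaluation module, i.e.\ there is a maximal ideal $\mathfrak{m}' \subset B'$ with $\LL_{S',n-1} \otimes \mathfrak{m}'$ acting as zero on $Z$. One must first dispose of the trivial case: if $Z$ were a trivial $(\LL_{S,n}(B))_K$-module then the induced highest weight module would either be trivial or reduce the analysis, but in the present setting $L_{\LL_{S,n}(B)}(K,{\bm \beta},Z)$ is assumed non-trivial with non-uniformly bounded weight spaces, forcing $Z$ to be non-trivial. Having located $\mathfrak{m}'$, I would then transport this back to the original algebra $B$: the point is to show that $\mathfrak{m}'$ corresponds to (the image of) a maximal ideal $\mathfrak{m}$ of $B$, and that in fact the full $(\LL_{S,n}(B))_K = (\LL_{S,n})_K \otimes B$ annihilates $Z$ after killing $\mathfrak{m}$ — not merely the lower-rank piece. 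This last point requires checking that the various generators of $(\LL_{S,n})_K$ not lying in the identified copy of $\LL_{S',n-1}$ (for instance the central-type elements $S \otimes t^{\bb 0}$, or $d_n$-type elements, or elements $t^{\bb r} d_n$ with $\bb r \in K$) are also forced, by the bracket relations inside $(\LL_{S,n})_K$ together with irreducibility of $Z$, to annihilate $Z \otimes \mathfrak{m}$; a submodule argument of the type used repeatedly in Section \ref{secubm} (pass to $\{v \in Z : (\LL_{S,n})_K \otimes \mathfrak{m}. v = 0\}$ and show it is a nonzero submodule) should close this.

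The hard part, I expect, is the bookkeeping in the change of variables and the precise identification of $(\LL_{S,n}(B))_K$ with a lower-rank loop extended Witt algebra: one must be careful that the ``$S$'' of the lower-rank algebra genuinely remains a \emph{finite-dimensional} abelian Lie algebra (so that Theorem \ref{mthmub} applies verbatim), and that the degree-zero Cartan directions and the adjoined abelian pieces are correctly accounted for so that no relations are lost. A secondary subtlety is ensuring that the maximal ideal obtained over $B'$ descends to a genuine single-point (maximal ideal) condition over the original $B$ rather than only a generalised evaluation condition; but since Theorem \ref{mthmub} already delivers an honest single point evaluation (a maximal ideal, not a power of one) for the uniformly bounded module $Z$, and $B' $ is a quotient/localisation-free image of $B$ in the relevant sense, this should follow formally. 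Once all generators of $(\LL_{S,n})_K \otimes B$ are shown to factor through evaluation at $\mathfrak{m}$, the conclusion that $Z$ is a single point evaluation $(\LL_{S,n}(B))_K$-module is immediate.
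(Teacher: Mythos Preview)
Your overall strategy---identify $(\LL_{S,n}(B))_K$ with a lower-rank loop extended Witt algebra and invoke Theorem \ref{mthmub}---is exactly the paper's. The difference is that the paper shows this identification is a genuine Lie algebra \emph{isomorphism}
\[
(\LL_{S,n}(B))_K \;\cong\; \LL_{S',\,n-1}(B), \qquad S' := S \oplus \C{\bf e}_n,
\]
with the \emph{same} $B$, not merely an inclusion into some $\LL_{S',n-1}(B')$. After sending $K$ to $\{{\bf r}\in\Z^n : r_n=0\}$ by a $GL_n(\Z)$ change of variables, the point is that every element $t^{\bf r}d_n$ with $r_n=0$ already behaves like an abelian direction: such elements commute with one another and with $S\otimes t^{\bf r}$, and $W_{n-1}$ acts on them exactly as it does on $\C{\bf e}_n \otimes A_{n-1}$. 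So the assignment $D({\bf e}_n,{\bf r})\otimes b \mapsto {\bf e}_n\otimes t^{\bf r}\otimes b$ absorbs \emph{all} of these into the enlarged abelian piece $S'\otimes A_{n-1}$, not just $d_n$ itself.

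Once you see this, every complication you flag disappears: there are no leftover generators outside the lower-rank copy, no auxiliary algebra $B'$ to descend from, and no separate submodule argument. Theorem \ref{mthmub} applies directly to the irreducible uniformly bounded $\LL_{S',n-1}(B)$-module $Z$ and hands you a maximal ideal of $B$ itself. Your proposed route would eventually close (the submodule trick you sketch does work), but the ``hard parts'' you anticipate are artefacts of treating the identification as a strict inclusion rather than the isomorphism it is.
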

 \begin{proof}
In the view of Theorem \ref{mthmub}, it is enough to prove that $(\LL_{S,n}(B))_K$ is isomorphic to $(\LL_{S',n-1}(B))$ as a Lie algebra, where $S'$ is some finite dimensional abelian Lie algebra. It is enough to prove that 
$(\LL_{S,n}(B))_K$ is isomorphic to the Lie algebra $\mathcal{G}: = \mathrm{span}\{D({\bb u , r}) \otimes b, s \otimes t^{{\bb s}} \otimes b' \mid {\bb r, s} \in \Z^n, r_n = 0  = s_n, s \in S, b,b' \in B\}$ as $\mathcal{G}$ is isomorphic
to $\LL_{S', n-1}(B)$, with $S' := S \oplus \C {\bb e_n}$ by the assignments
$D({\bb{u, r}}) \otimes b \mapsto D({\bb{u, r}}) \otimes b$ for ${\bb u} \in \C^n$ and ${\bb r} \in \Z^n$ with $u_n = 0 = r_n$,   $D({\bb{e_n, r}}) \otimes b \mapsto {\bb e_n} \otimes t^{{\bb r}} \otimes b$, and $s \otimes t^{{\bb s}} \otimes b' \mapsto s \otimes t^{\bb s} \otimes b'.$
Let ${\bm {\alpha_1}, \ldots, {\bm \alpha_{n-1}}}$ be elements of $\Z^n$ such that $K = \sum_{i = 1}^{n-1}{\Z {\bm \alpha_i}}$, and let ${\bm \alpha_n} \in \Z^n$ such that $\Z^n = K \oplus \Z {\bm \alpha_n}$. Let $\{\bm{\gamma_1}, \ldots ,\bm{\gamma_n}\}$ be the basis of
$\mathbb{R}^n$ which is dual to $\{{\bm {\alpha_1}, \ldots, {\bm \alpha_{n}}}\}$. Then the map $\Phi : \mathcal{G} \rightarrow (\LL_{S,n}(B))_K$ by
$\Phi (t^{\bb{r}}d_i \otimes b) = D({\bm \gamma_i}, \sum_{i = 1}{r_i {\bm \alpha_i}})$  and $\Phi(s \otimes t^{{\bb{r}}} \otimes b) = s\otimes t^{\sum_{i = 1}{r_i {\bm \alpha_i}}} \otimes b$ for $1 \leq i \leq n$ and $r_n = 0$. An easy checking shows that the map $\Phi$ is a Lie algebra isomorphism
and we are done.
 \end{proof}

\begin{theorem} \label{prom2}
$L_{\LL_{S,n}(B)}(K,  {\bm \beta}, Z)$ is a single point evaluation $\LL_{S,n}(B)$-module. 
\end{theorem}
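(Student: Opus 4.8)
The plan is to produce a maximal ideal $\mathfrak{m}$ of $B$ such that the ideal $\mathfrak{a}:=\LL_{S,n}\otimes\mathfrak{m}$ of $\LL_{S,n}(B)$ annihilates all of $L:=L_{\LL_{S,n}(B)}(K,{\bm \beta},Z)$. Once this is achieved the action of $\LL_{S,n}(B)$ on $L$ factors through $\LL_{S,n}(B)/\mathfrak{a}\cong\LL_{S,n}\otimes(B/\mathfrak{m})\cong\LL_{S,n}$ (as $B/\mathfrak{m}\cong\C$), which is exactly the assertion that $L$ is a single point evaluation module. Because $\mathfrak{a}$ is an ideal of $\LL_{S,n}(B)$, the subspace $L^{\mathfrak{a}}:=\{v\in L:\mathfrak{a}.v=0\}$ is an $\LL_{S,n}(B)$-submodule: if $v\in L^{\mathfrak{a}}$, $g\in\LL_{S,n}(B)$ and $a\in\mathfrak{a}$ then $a.(g.v)=g.(a.v)+[a,g].v=0$ since $[a,g]\in\mathfrak{a}$. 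Hence, by irreducibility of $L$, it is enough to show that $L^{\mathfrak{a}}\neq 0$.

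First I would fix the ideal. By Proposition \ref{propimp}, the highest weight space $Z$ is a single point evaluation module for $(\LL_{S,n}(B))_K=(\LL_{S,n})_K\otimes B$; writing this out, there is an algebra homomorphism $\eta:B\to\C$ with $\mathfrak{m}:=\ker\eta$ maximal and $\bigl((\LL_{S,n})_K\otimes\mathfrak{m}\bigr).Z=0$. Put $\mathfrak{a}:=\LL_{S,n}\otimes\mathfrak{m}$. It suffices to show $Z\subseteq L^{\mathfrak{a}}$. Relative to the triangular decomposition $\LL_{S,n}(B)=\LL_{S,n}(B)^-\oplus(\LL_{S,n}(B))_K\oplus\LL_{S,n}(B)^+$ associated with $\Z^n=K\oplus\Z{\bm \beta}$, the space $Z$ is killed by $\LL_{S,n}(B)^+$ by construction of $L$ as a quotient of the generalised Verma module, and is killed by $\mathfrak{a}\cap(\LL_{S,n}(B))_K=(\LL_{S,n})_K\otimes\mathfrak{m}$ by the choice of $\mathfrak{m}$. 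The remaining and principal point is to prove that the negative part $\mathfrak{a}^-:=\bigoplus_{d\geq 1,\,k\in K}(\LL_{S,n})_{k-d{\bm \beta}}\otimes\mathfrak{m}$ also annihilates $Z$; this is where the work lies.

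To handle $\mathfrak{a}^-$ I would induct on the depth $d\geq 1$. Fix $z\in Z$ and $x\in(\LL_{S,n})_{k-d{\bm \beta}}\otimes\mathfrak{m}$, and assume inductively that $(\LL_{S,n})_{k'-e{\bm \beta}}\otimes\mathfrak{m}$ kills $z$ for every $k'\in K$ and every $1\leq e\leq d-1$. For any $y\in(\LL_{S,n})_{k''+e{\bm \beta}}\otimes B$ with $e\geq 1$ (a typical element of $\LL_{S,n}(B)^+$) we get $y.(x.z)=[y,x].z+x.(y.z)=[y,x].z$, since $y.z=0$; moreover $[y,x]\in(\LL_{S,n})_{(k''+k)+(e-d){\bm \beta}}\otimes\mathfrak{m}$, and this element kills $z$ in each case: it is positive when $e>d$, it lies in $(\LL_{S,n})_K\otimes\mathfrak{m}$ when $e=d$, and it lies in $\mathfrak{a}^-$ at depth $d-e$ with $1\leq d-e\leq d-1$ when $1\leq e<d$, so the inductive hypothesis applies. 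Therefore $\LL_{S,n}(B)^+.(x.z)=0$. Consequently $x.z$ is either zero or a weight vector killed by $\LL_{S,n}(B)^+$ sitting strictly below the top $\Z{\bm \beta}$-level of $L$; but the $U(\LL_{S,n}(B))$-span of such a vector is supported below the top level (by the PBW factorisation $U(\LL_{S,n}(B))=U(\LL_{S,n}(B)^-)\,U((\LL_{S,n}(B))_K)\,U(\LL_{S,n}(B)^+)$), hence is a nonzero proper submodule of the irreducible module $L$, which is impossible. Thus $x.z=0$, and the induction is complete.

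Combining the three contributions gives $\mathfrak{a}.Z=0$, so $Z\subseteq L^{\mathfrak{a}}$, whence $L^{\mathfrak{a}}=L$ by irreducibility, and $L$ is a single point evaluation $\LL_{S,n}(B)$-module. The one genuinely delicate step is the vanishing of $\mathfrak{a}^-$ on $Z$; the rest is bookkeeping with the root-space grading, the PBW theorem, and irreducibility. I note that this argument is self-contained once Proposition \ref{propimp} is in hand, and in particular does not require first establishing a generalised (jet-type) evaluation structure on $L$.
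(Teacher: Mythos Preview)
Your proposal is correct and follows essentially the same approach as the paper. Both arguments invoke Proposition~\ref{propimp} to obtain a maximal ideal $\mathfrak{m}=\ker\psi$ of $B$ with $(\LL_{S,n})_K\otimes\mathfrak{m}$ annihilating $Z$, then prove by induction on the ${\bm\beta}$-depth that elements of $(\LL_{S,n})_{k-d{\bm\beta}}\otimes\mathfrak{m}$ applied to $Z$ yield highest weight vectors strictly below the top level (hence zero by irreducibility), and conclude by observing that the annihilator of $\LL_{S,n}\otimes\mathfrak{m}$ is a nonzero submodule; your formulation in terms of the ideal $\mathfrak{a}$ and the submodule $L^{\mathfrak{a}}$ is a tidy repackaging of the paper's computation with elements $b-\psi(b)$.
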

\begin{proof}
By Proposition \ref{propimp}, $Z$ is an evaluation $(\LL_{S,n}(B))_K$-module. So there is an algebra homomorphism $\psi: B \rightarrow \C$ such that 	
$$(D({\bb u,r})\otimes b).z=\psi (b)D({\bb u, r}).z,  (s\otimes t^{\bb m}\otimes b'). z= \psi(b')(s\otimes t^{\bb m}).z,$$
where ${\bb u} \in \C^n, {\bb r,m} \in K$ and $b, b' \in B, s \in S$ and $z \in Z$.  Also as ${L_{S, n} (B)}^+$ acts trivially on $Z$, we have
$$(D({\bb u}, p_1 {\bb \beta} + {\bb r})\otimes b).z = 0 =  (s\otimes t^{l_1 {\bm \beta} + \bb m}\otimes b').z,$$
for all $b, b' \in B$ and $p_1$ and $l_1$ are any positive integers. It implies that  $$(D({\bb u}, p_1 {\bb \beta} + {\bb r})\otimes(b - \psi(b))z = 0 = (s\otimes t^{l_1 {\bm \beta} + \bb m})\otimes (b'- \psi(b'))z,$$
where ${\bb u} \in \C^n, {\bb r,m} \in K$ and $b, b' \in B, s \in S, p_1, l_1 \in \N$ .

\noindent
{\bf Claim :} For ${\bm \alpha}\in \mathbb{Z}^n, {\bm \alpha} = p {\bm\beta} + {\bb r} $ with $ p \leq -1$ and ${\bb r} \in K$, 
$(D({\bb u}, {\bm \alpha})\otimes (b-\psi(b))).z =0= ((S\otimes t^{\bm {\alpha}})\otimes (b-\psi(b))).z$ for all $b\in B$.

\noindent
We will prove the claim by proving that $(D({\bb u}, {\bm \alpha})\otimes (b-\psi(b))).z$ is a highest weight vector of $V$ which does not lie in highest weight space, hence must be zero. We will use induction on $p$.
Let $p=-1.$ Let ${\bb v}\in \mathbb{C}^n, {\bm {\bm \gamma}} \in \mathbb{Z}^n, {{\bm {\bm \gamma}}} = l {\bm \beta} + {\bb s}$ where $ l \in \N, {\bb s} \in K $ and $b'\in B $ . We have
$$(D({\bb v, {\bm {\bm \gamma}}})\otimes b')(D({\bb u}, { \bm \alpha})\otimes(b -\psi (b))).z = (D({\bb w}, {\bm \alpha} + {\bm {\bm \gamma}})\otimes(b'b-b' \psi(b))).z,$$
where ${\bb w}=({\bb v}, { \bm \alpha} ){\bb u}-({\bb u}, { \bm {\bm \gamma}}){\bb v}.$
\noindent	
Now, if $ p + l \geq 1$ then $(D({\bb w}, {\bm \alpha} + {\bm {\bm \gamma}})\otimes(b'b-b'\psi(b))).z = 0$. In the case $p + l =0 $. We have 
$$(D({\bb w}, {\bm \alpha} + {\bm {\bm \gamma}})\otimes(b'b-b'\psi(b))).z = \psi(b'b-b'\psi(b))(D({\bb w}, {\bm \alpha} + {\bm {\bm \gamma}}).z =0.$$
Similarly, we have 
		$$(S\otimes t^{\bm {\bm \gamma}}\otimes b') (D({\bb u}, { \bm \alpha})\otimes(b -\psi (b))).z = 0.$$ Now, it is easy to prove using induction that the above identity holds for all $p \leq -1$.
It follows that $(D({\bb u, {\bm \alpha}})\otimes (b-\psi(b))).z$ is a highest weight vector, hence must be zero.
Similarly we  also have $((S\otimes t^{\bm \alpha})\otimes (b-\psi(b))).z =0 $ for all ${\bm  \alpha} \in \mathbb{Z}^n$ with $ p \leq -1$, $b\in B$.
	
	 Consider the set 
	\begin{center}
		$T : = \{v\in V~|~ (D({\bb u,r})\otimes (b-\psi(b))).v =0=((S\otimes t^{\bb m})\otimes (b-\psi(b))).v ~~ \forall b\in B, {\bb r,m}\in \mathbb{Z}^n \}.$
	\end{center}
	By above it follows that $T \neq 0$ as $Z \subseteq T$. It is an easy checking that $T$ is $\mathfrak{L}_{S,n}(B)$-submodule of $V$, hence must be $V$.
\end{proof}
In the light of the above theorem we concentrate on $L_{\LL_{S,n}}(K, {\bm{\beta}}, Z)$, an irreducible highest weight $\LL_{S,n}$-module with finite dimensional weight spaces. We have the following.

\begin{theorem}
	For an  $\mathfrak{L}_{S,n}$-module $L_{\LL_{S,n}}(K, {\bm \beta}, Z)$, there exists a co-dimension one subspace $\overset{\circ}{S}$ of $S$ such that $\overset{\circ}{S} \otimes A_n$ acts trivially on $L_{\LL_{S,n}}(K, {\bm{\beta}}, Z)$. In particular, 
\begin{enumerate}[label= \((\arabic*)\)]
\item If $S \otimes t^{\bb 0}$ acts nontrivially, then $L_{\LL_{S,n}}(K, {\bm{\beta}}, Z)$ is an irreducible highest weight module for $\LL$.
\item If $S \otimes t^{\bb 0}$ acts trivially, then $L_{\LL_{S,n}}(K, {\bm{\beta}}, Z)$ is an irreducible highest weight module for $W_n$.
\end{enumerate}
\end{theorem}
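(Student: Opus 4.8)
The plan is to carry out, for $L:=L_{\LL_{S,n}}(K,{\bm \beta},Z)$, the highest weight analogue of the reduction in Proposition \ref{prom1}. I would grade $\LL_{S,n}$ and $L$ by \emph{${\bm \beta}$-degree} (an element of $K+j{\bm \beta}$ has degree $j$), so that $\LL_{S,n}^{\pm}=\bigoplus_{d\geq 1}(\LL_{S,n})_{K\pm d{\bm \beta}}$, the highest weight space $Z$ sits in degree $0$, and $\LL_{S,n}^{+}.Z=0$. Since $L=U(\LL_{S,n}^{-}).Z$, every weight of $L$ has degree $\leq 0$ and the degree-$0$ component of $L$ is exactly $Z$. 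Now $S\otimes\C t^{\bb 0}$ is central in $\LL_{S,n}$, so it acts on the irreducible module $L$ through a character $\chi\colon S\to\C$; put $\overset{\circ}{S}:=\ker\chi$, of codimension $0$ or $1$ in $S$, the codimension being $1$ precisely when $S\otimes t^{\bb 0}$ acts non-trivially. The whole statement reduces to the claim that $\overset{\circ}{S}\otimes A_n$ annihilates $Z$: granting this, $N:=\{v\in L:(\overset{\circ}{S}\otimes A_n).v=0\}$ contains $Z$ and is an $\LL_{S,n}$-submodule (for $y\in\overset{\circ}{S}$ one has $[y\otimes t^{\bb m},D({\bb u},{\bb r})]=-({\bb u}\,|\,{\bb m})\,y\otimes t^{{\bb m}+{\bb r}}\in\overset{\circ}{S}\otimes A_n$ and $[y\otimes t^{\bb m},S\otimes A_n]=0$), so $N=L$ by irreducibility; then $L$ is a module over $\LL_{S,n}/(\overset{\circ}{S}\otimes A_n)\cong W_n\ltimes\big((S/\overset{\circ}{S})\otimes A_n\big)$, which is $\LL$ in case~(1) and $W_n$ in case~(2), and irreducibility together with the highest weight structure persists.

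To annihilate $\overset{\circ}{S}\otimes A_n$ on $Z$, I would first deal with the generators of ${\bm \beta}$-degree $\geq 0$. Recall $Z$ is an irreducible uniformly bounded $(\LL_{S,n})_K$-module, and by the isomorphism constructed in the proof of Proposition \ref{propimp} one has $(\LL_{S,n})_K\cong\LL_{S',n-1}$ with $S':=S\oplus\C{\bb e_n}$, under which $\bigoplus_{{\bb k}\in K}\big(\overset{\circ}{S}\otimes t^{\bb k}\big)$ corresponds to $\overset{\circ}{S}\otimes A_{n-1}$. Applying Proposition \ref{prom1} (equivalently Theorem \ref{mthmub}) to $Z$ as an $\LL_{S',n-1}$-module produces a subspace $\overset{\circ}{S}'\subseteq S'$ of codimension $\leq 1$, namely the kernel of the central character of $S'\otimes t^{\bb 0}$ on $Z$, such that $\overset{\circ}{S}'\otimes A_{n-1}$ acts trivially on $Z$. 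Since the restriction of that character to $S$ equals $\chi$, we get $\overset{\circ}{S}\subseteq\overset{\circ}{S}'$, and hence $\overset{\circ}{S}\otimes t^{\bb k}$ annihilates $Z$ for every ${\bb k}\in K$; combined with $\LL_{S,n}^{+}.Z=0$ this covers every $y\otimes t^{\bb m}$ with $\deg({\bb m})\geq 0$.

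The remaining case — the one I expect to be the main obstacle — is $y\otimes t^{\bb m}$ with $\deg({\bb m})=-d<0$, where $y\otimes t^{\bb m}$ lies neither in $\LL_{S,n}^{+}$ nor in $(\LL_{S,n})_K$, so neither the highest weight condition nor the boundedness of $Z$ applies. Fix $z\in Z$, put $w:=(y\otimes t^{\bb m}).z$, and aim to show $U(\LL_{S,n}).w\cap Z=0$; as $Z\neq 0$ this forces $w=0$ by irreducibility. Because $L$ has no weight of positive ${\bm \beta}$-degree and its degree-$0$ part is $Z$, it suffices to prove $u.w=0$ for every homogeneous $u\in U(\LL_{S,n})$ of ${\bm \beta}$-degree $d$. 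By the PBW theorem $u$ is a sum of monomials $n^{-}n^{0}n^{+}$ with $n^{\pm}\in U(\LL_{S,n}^{\pm})$, $n^{0}\in U((\LL_{S,n})_K)$ and $\deg(n^{+})=d-\deg(n^{-})\geq d$, so it is enough to show $n^{+}.w=0$ whenever $\deg(n^{+})\geq d$. Writing $n^{+}$ as a sum of monomials $x_{1}\cdots x_{\ell}$ with each $x_{i}\in\LL_{S,n}^{+}$ homogeneous of root ${\bb r}_i$, I would push $y\otimes t^{\bb m}$ to the far left: every bracket $[x_{i},y\otimes t^{(\cdot)}]$ is a scalar multiple of some $y\otimes t^{(\cdot)}$ (or zero), while any $x_{i}$ that is left to act directly on $z$ contributes $0$ because $\LL_{S,n}^{+}.z=0$; hence the only possibly non-zero term is a scalar multiple of $(y\otimes t^{{\bb m}+{\bb r}_{1}+\cdots+{\bb r}_{\ell}}).z$. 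But ${\bb m}+\sum_i{\bb r}_i$ has ${\bm \beta}$-degree $-d+\deg(n^{+})\geq 0$, so $y\otimes t^{{\bb m}+\sum_i{\bb r}_i}$ lies in $\LL_{S,n}^{+}$ (if this degree is positive) or equals $y\otimes t^{\bb k}$ with ${\bb k}\in K$ and $y\in\overset{\circ}{S}$ (if it is $0$), and in either case kills $z$ by the previous paragraph. Thus $n^{+}.w=0$, whence $u.w=0$; this gives $w=0$, so $\overset{\circ}{S}\otimes A_n$ annihilates $Z$, which completes the argument.
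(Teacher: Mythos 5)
Your proposal is correct and follows essentially the same route as the paper: reduce to showing $\overset{\circ}{S}\otimes A_n$ kills the top space $Z$, handle the $K$-part via Proposition \ref{prom1} through the identification $(\LL_{S,n})_K\cong\LL_{S',n-1}$ of Proposition \ref{propimp}, get the positive-degree part for free from $\LL_{S,n}^{+}.Z=0$, and finish with the submodule $\{v:\overset{\circ}{S}\otimes A_n.v=0\}$. The only (harmless) difference is in the negative-degree step, where the paper, following the method of Theorem \ref{prom2}, shows by induction that $(y\otimes t^{\bm\gamma}).z$ is a singular vector outside the highest weight space, while you reach the same conclusion by a direct PBW/commutation computation showing $U(\LL_{S,n})\big((y\otimes t^{\bm\gamma}).z\big)$ misses $Z$.
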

\begin{proof} By application of Proposition \ref{prom1} and Theorem \ref{prom2}, there exists a co-dimension 1 subspace $\overset{\circ}{S}$ such that $ \overset{\circ}{S} \otimes t^{\bb m}$ acts  trivially on $Z$ for all ${\bb m} \in K$. We also have 
	
		$$\overset{\circ}{S} \otimes t^{\bm \alpha}.Z=0$$ for all ${\bm \alpha}\in \mathbb{Z}^n$ such that ${\bm \alpha} = p {\bm \beta} + {\bm s}, p \in \N, \bm s \in K $. 
\noindent
Following the same method as that of proof of Theorem \ref{prom2} we get
	
		$$\overset{\circ}{S} \otimes t^{\bm {\bm \gamma}}.Z=0$$ for all ${\bm {\bm \gamma}}\in \mathbb{Z}^n,   {\bm {\bm \gamma}} = \ell {\bm \beta} + {\bb s}, \ell \in \Z_{< 0}, {\bb s} \in K$
and the set $$\{v\in L_{\LL_{S,n}}(K, {\bm \beta}, Z)~ | \,\,\,\, \overset{\circ}{S} \otimes t^{\bb r}.v=0 ~\forall {\bb r}\in \mathbb{Z}^n \}$$ is a non-zero submodule of $L_{\LL_{S,n}}(K, {\bm \beta}, Z)$, hence must be $L_{\LL_{S,n}}(K, {\bm \beta}, Z)$ itself. This completes proof of 
$(1)$. Proof of $(2)$ follows by arguments similar to that of $(1)$.
\end{proof}
\subsection{Final result: Non-uniformly bounded case}
We sum up the results of this section in the following:
\begin{thm} \label{mthmnub}
Let $V$ be an irreducible Harish-Chandra $\LL_{S,n}$-module with non-uniformly bounded weight spaces, then $V$ can be considered as an irreducible $\LL(B)$ or $W_n (B)$-module. Further, considered as an $\LL(B)$ or $W_n (B)$-module, V
 is a single point evaluation module and upto a change of variables $V \cong L_{\LL}(K, {\bm \beta}, Z)$ or $ L_{W_n}(K, {\bm \beta}, Z)$.
 \end{thm}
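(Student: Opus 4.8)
The plan is to prove Theorem~\ref{mthmnub} (recall that throughout this part we are under the standing assumption $n\ge 2$) by simply chaining together the three main results already established in this section. First I would invoke Theorem~\ref{thmnu}: an irreducible Harish-Chandra $\LL_{S,n}(B)$-module $V$ with non-uniformly bounded weight spaces is an $\LL_{S,n}(B)$-highest weight module. After the change of variables that normalises the splitting $\Z^n=K\oplus\Z\bm\beta$ underlying the relevant triangular decomposition, irreducibility forces $V\cong L_{\LL_{S,n}(B)}(K,\bm\beta,Z)$, whose highest weight space $Z=V[\bm\lambda+p\bm\beta+K]$ is an irreducible $(\LL_{S,n}(B))_K$-module; since $V$ has finite dimensional weight spaces, $Z$ is moreover uniformly bounded --- this is precisely the assertion recorded just before Proposition~\ref{propimp}.

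Next I would apply Theorem~\ref{prom2} directly to conclude that $L_{\LL_{S,n}(B)}(K,\bm\beta,Z)$ is a single point evaluation $\LL_{S,n}(B)$-module. Unravelling Definition~\ref{evd1}, this yields a maximal ideal $\mathfrak{m}\subset B$ with $\LL_{S,n}\otimes\mathfrak{m}$ acting as zero on $V$, so the representation factors through $\LL_{S,n}(B)/(\LL_{S,n}\otimes\mathfrak{m})\cong\LL_{S,n}\otimes(B/\mathfrak{m})\cong\LL_{S,n}$. Hence $V$ is an irreducible $\LL_{S,n}$-module, still a highest weight module with finite dimensional, non-uniformly bounded weight spaces, so by Theorem~\ref{hwbc} I may write $V\cong L_{\LL_{S,n}}(K,\bm\beta,Z)$, where $Z$ is now read off via the evaluation as an irreducible uniformly bounded $(\LL_{S,n})_K$-module.

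I would then feed this into the reduction-of-$S$ theorem proved immediately above: there is a codimension-one subspace $\overset{\circ}{S}\subseteq S$ with $\overset{\circ}{S}\otimes A_n$ acting trivially on $V$, so the $\LL_{S,n}$-action descends to $\LL_{S,n}/(\overset{\circ}{S}\otimes A_n)$. If $S\otimes t^{\bb 0}$ acts non-trivially this quotient is $\LL=W_n\ltimes A_n$, and $V\cong L_{\LL}(K,\bm\beta,Z)$; if $S\otimes t^{\bb 0}$ acts trivially then, by part~(2) of that theorem, $A_n$ acts trivially as well, the quotient is $W_n$, and $V\cong L_{W_n}(K,\bm\beta,Z)$. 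Finally, for the ``single point evaluation'' clause at the level of the loop algebra: the $\LL$- (respectively $W_n$-)module $V$ is made into an $\LL(B)$- (respectively $W_n(B)$-)module by pulling back along the surjective algebra homomorphism $\eta\colon B\to B/\mathfrak{m}\cong\C$, that is $x\otimes b\mapsto\eta(b)\,x$, which is exactly a single point evaluation module in the sense of Definition~\ref{evd1}; and the isomorphism $V\cong L_{\LL}(K,\bm\beta,Z)$ or $V\cong L_{W_n}(K,\bm\beta,Z)$ holds up to the change of variables fixed in the first step.

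I expect the assembly itself to be essentially bookkeeping, the genuine content residing in the cited results: Theorem~\ref{thmnu} (that $V$ is highest weight), Theorem~\ref{prom2} (that it is a single point evaluation module, which itself rests on Proposition~\ref{propimp} --- identifying $(\LL_{S,n}(B))_K$ with a lower-rank loop extended Witt algebra --- and on the uniformly bounded classification Theorem~\ref{mthmub}), and the reduction-of-$S$ theorem. The one thing I would still take care over is keeping the highest weight datum $Z$ coherent through the three successive reductions (loop $\to$ single point, then the reduction of $S$) and verifying that each reduction commutes with passing to the triangular decomposition; this is harmless because every reduction is a surjection of Lie algebras acting on one and the same graded vector space $V=\bigoplus_{\bm\lambda}V_{\bm\lambda}$, so the positive, negative and Cartan parts --- and hence the highest weight space --- are transported along compatibly. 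It is also worth noting that the hypothesis $n\ge 2$ is genuinely used here: when $n=1$ the highest weight space is one dimensional and the modules are only tensor products of single point \emph{generalised} evaluation modules, so the sharper conclusion obtained in this case fails.
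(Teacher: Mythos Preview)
Your proposal is correct and is exactly the approach the paper takes: Theorem~\ref{mthmnub} is stated there as a summary (``We sum up the results of this section in the following'') with no separate proof, and the intended argument is precisely the concatenation of Theorem~\ref{thmnu}, Theorem~\ref{prom2}, and the reduction-of-$S$ theorem that you spell out. Your additional bookkeeping remarks about the compatibility of the triangular decomposition under the successive Lie-algebra surjections are accurate and, if anything, make the assembly more explicit than the paper does.
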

\subsection {Applications and remarks}
\noindent
We start with following:
\begin{definition}
	Let $V$ be any $\mathfrak{L} (B)$-module. An element $v\in V$ is called $\mathfrak{L}$-trivial if $\mathfrak{L}.v = 0$.
\end{definition}
\begin{prop} \label{propl}
Let $V$ be a non-trivial irreducible $\LL(B)$-module with finite dimensional weight spaces. Then $V$ does not contain any non-zero $\LL$-trivial vector.
\end{prop}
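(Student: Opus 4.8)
The plan is to deduce the proposition from the classification established above, using the following elementary observation. Suppose the $\LL(B)$-action on $V$ is a single point evaluation, i.e.\ it factors through $\LL(B)\to\LL$, $x\otimes b\mapsto\eta(b)(x\otimes 1)$, for an algebra homomorphism $\eta\colon B\to\C$ (so that $\eta(1)=1$). Then any $\LL$-trivial vector $v$ (a vector with $(x\otimes 1).v=0$ for all $x\in\LL$) automatically satisfies $(x\otimes b).v=\eta(b)(x\otimes 1).v=0$ for all $x\in\LL$ and $b\in B$; hence $\C v$ is an $\LL(B)$-submodule of $V$, and if $v\neq 0$ then irreducibility forces $V=\C v$, contradicting the non-triviality of $V$. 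Thus it is enough to prove that $V$ is a single point evaluation module.

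To carry this out, assume for contradiction that $V$ contains a non-zero $\LL$-trivial vector. Being a non-trivial irreducible Harish-Chandra module, $V$ is either uniformly bounded or has non-uniformly bounded weight spaces. In the uniformly bounded case, Theorem~\ref{mthmub}, applied with $S$ one-dimensional (so that $\LL_{S,n}(B)=\LL(B)$), says that $V$ may be regarded either as an irreducible $\LL(B)$-module which is a single point evaluation module, or as an irreducible $W_n(B)$-module which is a single point evaluation module; in the non-uniformly bounded case the same two alternatives are provided by Theorems~\ref{thmnu}, \ref{prom2} and \ref{mthmnub}. In the first alternative the observation above gives the desired contradiction directly. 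In the second alternative, ``considered as a $W_n(B)$-module'' means precisely that $A_n\otimes B$ annihilates $V$ (cf.\ Proposition~\ref{prom1}(2)); since $(W_n\otimes 1).v=0$, the $W_n(B)$-evaluation gives $(W_n\otimes B).v=0$, and together with $(A_n\otimes B).v=0$ this yields $\LL(B).v=0$, so again $\C v$ is a submodule and $V$ is trivial, a contradiction.

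The argument is little more than bookkeeping; the only points that need attention are the split into the uniformly bounded and non-uniformly bounded cases and, in the $W_n(B)$ alternative, the observation that $\LL$-triviality of $v$ upgrades to $\LL(B)$-triviality because $A_n\otimes B$ acts by zero. It is worth noting that there is no circularity with the forward reference to this proposition in the proof of Theorem~\ref{thmnu}: that proof works throughout with the quotient $\overline W=W/\overset{\circ}{W}$ and nowhere uses $\overset{\circ}{W}=\{\mathbf{0}\}$, so the classification results invoked here do not depend on the present proposition.
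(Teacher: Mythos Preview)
Your argument is correct. It differs from the paper's proof in the following way. The paper first produces, from a hypothetical non-zero $\LL$-trivial vector $w$ and some $b\in B$ with $D(\mathbf{u}_1,\mathbf{r}_1)\otimes b\,.\,w\neq 0$, a $W_n$-equivariant map $\mathrm{ad}\,W_n\to V$, $D(\mathbf{u},\mathbf{r})\mapsto D(\mathbf{u},\mathbf{r})\otimes b\,.\,w$; irreducibility of the adjoint $W_n$-module forces this map to be injective, so $\mathrm{Supp}(V)$ is a full coset $\bm\gamma+\Z^n$. Since a highest weight module has truncated support, Theorem~\ref{thmnu} then shows $V$ is uniformly bounded, and only Theorem~\ref{mthmub} is needed for the final contradiction. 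You instead bypass the support argument entirely and invoke the full classification in both the uniformly bounded and non-uniformly bounded cases, obtaining single-point evaluation directly and then concluding as in your opening observation. Your route is shorter once Theorems~\ref{prom2} and~\ref{mthmnub} are available; the paper's route uses only the weaker consequence of Theorem~\ref{thmnu} that non-uniformly bounded modules are highest weight, at the cost of the extra adjoint-module step. Your remark about non-circularity (the proof of Theorem~\ref{thmnu} works with $\overline W=W/\overset{\circ}{W}$ and never uses $\overset{\circ}{W}=\{\mathbf 0\}$) is accurate and applies equally to both approaches, since the paper also appeals to Theorem~\ref{thmnu}. One small point: in the non-uniformly bounded $W_n(B)$ alternative, the relevant reference for ``$A_n\otimes B$ acts by zero'' is the theorem immediately following Theorem~\ref{prom2} rather than Proposition~\ref{prom1}(2), which is stated only for uniformly bounded modules; the conclusion is of course the same.
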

\begin{proof}
Let us begin on the contrary that there exists $0 \neq w \in V$ such that $\LL.w = 0$. Then there must exist ${\bb u}_1 \in \C^n, {\bb r}_1 \in \Z^n$ and $b \in B$ such that $D({\bb u_1, \bb r}_1)\otimes b. w \neq 0$.
Let $W := \mathrm{span}\{D({\bb u, \bb r}) \otimes b.w : \forall\,\, {\bf u} \in \C^n, {\bb r} \in \Z^n\}$. It is trivial checking that $W$ acquires a $W_n$-module structure and the map
$\Psi : \mathrm{ad}\,W_n \rightarrow W$ defined by $\Psi(D({\bb u, \bb r})) = D({\bb u, \bb r})\otimes b.w$ is a $W_n$-module map. Now, as $\Psi$ is a non-zero map, the irreducibility of $\mathrm{ad}\, W_n$ implies
that $\Psi$ is an isomorphism. But then we have $\mathrm{Supp} (V) = {\bm \gamma} + \Z^n,$ where ${\bm \gamma}$ is the weight of $w$. An immediate application of Theorem \ref{thmnu} implies that $V$ must be a uniformly bounded
$\LL(B)$-module. But then appealing to Theorem \ref{mthmub} we get that $D({\bb u, \bb r}) \otimes b'.w = \eta(b')D({\bb u, \bb r}).w = 0$ for all ${\bb u} \in \C^n, {\bb r} \in \Z^n, b' \in B$ and $\eta$ is the algebra homomorphism from $B$ to $\C$. In particular we have $D({\bb u, \bb r}) \otimes b.w=0$ for all
${\bb u} \in \C^n, {\bb r} \in \Z^n$, which is a required contradiction.
\end{proof}
We have the following result which generalises the result by Savage \cite{sav}:
\begin{cor} Every irreducible Harish-chandra module for $W_n (B)$ $(n \geq 2)$ is either uniformly bounded or a highest weight module. Further, every irreducible Harish-Chandra $W_n (B)$-module is a single point evaluation module.
\end{cor}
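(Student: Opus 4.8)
The plan is to read the Corollary off from the results already proved for $\mathfrak{L}_{S,n}(B)$ by taking $S$ to be the zero Lie algebra: then $S\otimes A_n=0$ and $\mathfrak{L}_{0,n}(B)=W_n\otimes B=W_n(B)$. All of Theorem \ref{mthmub2}, Theorem \ref{thmnu}, Proposition \ref{propimp} and Theorem \ref{prom2} were stated for an arbitrary finite-dimensional abelian $S$, hence apply verbatim to $W_n(B)$. So fix a non-trivial irreducible Harish-Chandra $W_n(B)$-module $V$ with $n\geq 2$.

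First I would establish the dichotomy. If $V$ is uniformly bounded there is nothing to prove for the first assertion; if $V$ is not uniformly bounded, then Theorem \ref{thmnu} (applied with $S=0$) shows that $V$ is a highest weight $W_n(B)$-module, so up to a change of variables $V\cong L_{W_n(B)}(K,{\bm \beta},Z)$ with $Z$ the (irreducible, uniformly bounded) highest weight space. This gives the first sentence of the Corollary. For the single-point conclusion I would split into the same two cases: if $V$ is uniformly bounded, Theorem \ref{mthmub2} says directly that $V$ is a single point evaluation module; if $V\cong L_{W_n(B)}(K,{\bm \beta},Z)$, then Theorem \ref{prom2} (again specialised to $S=0$) says that $V$ is a single point evaluation module. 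The trivial module is vacuously a single point evaluation module, so the two assertions cover every irreducible Harish-Chandra $W_n(B)$-module.

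The point that makes $n\geq 2$ essential, and which I expect to be the only real content to verify, sits inside Theorem \ref{prom2}: its proof invokes Proposition \ref{propimp}, which identifies $(W_n(B))_K$ with $\mathfrak{L}_{S',\,n-1}(B)$ for $S'=\C\,{\bb e_n}$ and then appeals to Theorem \ref{mthmub} to conclude that the highest weight space $Z$ is a single point evaluation module; this reduction needs $n-1\geq 1$, and conceptually it is exactly the mechanism flagged in the introduction, since for $n=1$ the highest weight space is one dimensional and one only obtains a tensor product of single point \emph{generalised} evaluation modules (Savage's theorem) rather than a genuine single point evaluation module. A secondary bookkeeping remark is that the proof of Theorem \ref{thmnu} uses, via its footnote, that a non-trivial irreducible $W_n(B)$-module carries no nonzero $W_n$-trivial vector; for $S=0$ this is the $W_n(B)$-analogue of Proposition \ref{propl}, proved identically using the irreducibility of $\operatorname{ad} W_n$ together with Theorem \ref{thmnu} and Theorem \ref{mthmub}. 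Apart from checking that each cited statement was indeed proved for general (hence zero) $S$, no genuine obstacle remains.
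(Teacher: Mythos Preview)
Your proposal is correct and follows exactly the paper's approach: set $S=0$ so that $\mathfrak{L}_{0,n}(B)=W_n(B)$ and invoke the already-proved results. The paper's proof is the one-line ``Taking the abelian Lie algebra $S$ to be zero, the result is now immediate from Theorem \ref{mthmub} and Theorem \ref{mthmnub}''; you simply unpack this by citing the constituent theorems (\ref{mthmub2}, \ref{thmnu}, \ref{prom2}) rather than the summary statements, and your remarks on why $n\geq 2$ is needed are accurate commentary but not additional proof content.
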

\begin{proof}
Taking the abelian Lie algebra $S$ to be zero, the result is now immediate from Theorem \ref{mthmub} and Theorem \ref{mthmnub}.
\end{proof}
\begin{remark}.
We would like to mention the significant difference between classification result of irreducible non-uniformly Harish-Chandra module for $n =1$ and $n \geq 2$. Unlike the case $n = 1$ where  irreducible non-uniformly bounded
Harish-Chandra modules are tenor product of finitely many irreducible single point generalised evaluation highest weight modules, for $n \geq 2$, every irreducible Harish-Chandra module (uniformly bounded or non-uniformly bounded) is a single point evaluation module.

\end{remark}

\noindent{\bf {Acknowledgments:}} Authors would like to thank Rencai Lu for some helpful discussions. First author would also like to thank Sudipta Mukherjee for some helpful discussions.

\end{document}